\documentclass[11pt,english]{amsart}

\usepackage[english]{babel}
\usepackage[T1]{fontenc}
\usepackage{lmodern}
\usepackage{amssymb}

\usepackage[centering]{geometry}

\theoremstyle{plain}
\newtheorem{theo}{Theorem}
\newtheorem{lemm}[theo]{Lemma}
\newtheorem{prop}[theo]{Proposition}
\newtheorem{coro}[theo]{Corollary}

\newtheorem{fact}[theo]{Fact}
\theoremstyle{definition}
\newtheorem{defi}[theo]{Definition}

\newtheorem{exem}[theo]{Example}
\theoremstyle{remark}
\newtheorem{rema}[theo]{Remark}

\usepackage{microtype}
\usepackage{enumerate}
\usepackage{graphicx}
\usepackage{color}
\usepackage{bm}
\usepackage{mathtools}
\usepackage[dvipsnames]{xcolor}
\definecolor{FlatRed}{RGB}{231,76,60}
\definecolor{FlatGreen}{RGB}{46,204,113}
\definecolor{FlatBlue}{RGB}{52,152,219}
\definecolor{FlatYellow}{RGB}{241,196,15}
\colorlet{FlatViolet}{FlatRed!50!FlatBlue}
\colorlet{FlatBrown}{FlatRed!50!FlatGreen}
\colorlet{FlatOrange}{FlatRed!50!FlatYellow}
\colorlet{FlatCyan}{FlatGreen!50!FlatBlue}

\newcommand{\R}{{\mathbb R}}

\newcommand{\N}{{\mathbb N}}

\newcommand{\D}{{\mathbb D}}
\newcommand{\T}{{\mathbb T}}
\newcommand{\RRe}{\mathrm{Re}}
\newcommand{\IIm}{\mathrm{Im}}
\newcommand{\id}{\mathrm{Id}}
\newcommand{\kernel}{\mathrm{Ker}}

\usepackage{enumitem}
\usepackage{textcomp}
\usepackage[normalem]{ulem}

\usepackage{hyperref}

\title{Random analytic functions via Gaussian multiplicative chaos}
\thanks{Y.H. is partially supported by National Key R\&D Program of China (No. 2022YFA1006300), and is grateful for the support from ERC Advanced Grant 741487 QFPROBA, as most of this work was carried out at the University of Helsinki. E.S. is supported by the Finnish Academy grant 309940.}

\author{Yichao Huang}
\address{Beijing Institute of Technology, School of Mathematics and Statistics, Beijing, China}
\email{yichao.huang@bit.edu.cn}

\author{Eero Saksman}
\address{University of Helsinki, Department of Mathematics and Statistics, P.O. Box 68, FIN-00014 University of Helsinki, Finland}
\email{eero.saksman@helsinki.fi}

\begin{document}

\begin{abstract}
We define a random analytic function $\varphi$ on the unit disc by letting a Gaussian multiplicative measure to be one of its Clark measures. We show that $\varphi$ is almost surely a Blaschke product and we provide rather sharp estimates for the density of its zeroes.
\end{abstract}

\dedicatory{The authors dedicate the paper in honor of Professor H\aa kan Hedenmalm on his 60th birthday$^\ast$}\thanks{$^\ast$We have to admit that there is a little delay from our part\dots}

\maketitle

\section{Introduction}
Given an analytic self map $\varphi:\D\to\D$ of the unit disc in the complex plane, for each $\alpha\in\T\coloneqq\partial\D$ the measure $\nu_\alpha\coloneqq\nu_{\varphi,\alpha}$ is defined via
\begin{equation}\label{eq:clark}
\RRe\left(\frac{\alpha+\varphi(z)}{\alpha-\varphi(z)}\right)\;=\;\int_{\T}\frac{1-|z|^2}{|e^{i\theta}-z|^2}\nu_\alpha(d\theta),\qquad z\in\D.
\end{equation}
The measure $\nu_\alpha$, or especially its singular part, describes how strongly and where on the boundary the function $\varphi$ takes the value $\alpha$ -- note that this can happen only at the boundary.  The \emph{Clark measures} $\nu_\alpha$ thus obtained for $\alpha\in\T$  have been much studied especially in connection with applications to perturbative operator theory, including the spectral theory of $1$-dimensional Schr\"odinger equation. Clark measures are defined in \cite{clark1972one} and further studied by Aleksandrov \cite{zbMATH04057865,aleksandrov1989inner,aleksandrov1994maximum,aleksandrov1995existence,aleksandrov1996isometric}, so they are also called Aleksandrov measures. We refer to the reviews \cite{poltoratski2006} and \cite{Saksman2007AnEI} for the basic properties of this interesting family of measures.

In the present paper we will employ the Clark measures to construct random analytic functions on the unit disc by defining  $\varphi:\D\to \D$  as the analytic self map of $\D$ such that its Clark measure at point $\alpha=1$ equals a given random measure $\nu$. Especially, we will consider the choice
\begin{equation}\label{eq:choice}
 \nu_{\varphi,1}=\mu_\gamma\coloneqq\text{``}\exp(\gamma X(\theta))\text{''}, \qquad \gamma\in (0,\sqrt{2}].
\end{equation}
Here  $\gamma\in (0,\sqrt{2}]$ is a fixed parameter, and  $\text{``}\exp(\gamma X(\theta))\text{''}$ stands for the (random) \emph{Gaussian multiplicative chaos measure} corresponding to a log-correlated Gaussian field $X$. The most natural choice for $X$ on $\T$ is the so-called ``canonical log-correlated field'' 
\begin{equation*}
X_c(\theta)=\sum_{n=1}^\infty n^{-1/2}\big(A_n\cos(n\theta) + B_n\sin(n\theta)\big), \qquad \theta\in[0,2\pi),
\end{equation*}
where the $A_n,B_n$ are independent, identically distributed standard Gaussians. In general, log-correlated Gaussian fields $X$ on $\T$ have the covariance structure
\begin{equation}\label{eq:cov}
\mathbb{E}\left[X(\theta_1)X(\theta_2)\right]=\log\frac{1}{|e^{i\theta_1}-e^{i\theta_2}|}+g(\theta_1,\theta_2)
\end{equation}
where $g:\T^2\to\R$ is a (symmetric) continuous function. For the canonical log-correlated field $X_c$, one has $g\equiv 0$.

Multiplicative chaos measures form a natural and important class of random positive measures that has re-appeared during last $15$ years in various important roles in statistical physics and other applications. We recall their proper definition and basic properties in Section~\ref{sec:chaos}, and we also recall  the definition of critical chaos measures at the threshold $\gamma=\sqrt{2}$ when it is needed. Overall, we refer the reader to the review \cite{Rhodes_2014} for a general background.

It is known that the mesures $\mu_\gamma$ for $\gamma\in(0,\sqrt{2})$ and $\gamma=\sqrt{2}$ are almost surely purely singular with respect to the Lebesgue measure on $\T$. The basic theory of Clark measures then implies that $\varphi$ is almost surely an inner function, i.e. $|\varphi(e^{i\theta})|=1$ for almost every $\theta$, see Proposition~\ref{Prop:Inner}. Any inner function $\varphi$ admits the decomposition (see \cite[Theorem~5.5]{garnett2007bounded})
\begin{equation*}
\varphi(z)=B(z)S(z),
\end{equation*}
where $B$ is a \emph{Blaschke product}:
\begin{equation*}
B(z)=e^{ic}z^{m}\prod\limits_{|z_n|\neq 0}\frac{\overline{z}_n}{|z_n|}\frac{z_n-z}{1-\overline{z}_n z}
\end{equation*}
with $c\in\R$, $m\geq 0$, $z_n\in\D$ and $\sum_{n=1}^\infty(1-|z_n|)<\infty$, and  $S$ is a \emph{singular inner function}, i.e.
\begin{equation*}
S(z)=\exp\left(-\int_{\T}\frac{e^{i\theta}+z}{e^{i\theta}-z}d\eta(\theta)\right)
\end{equation*}
where the measure $d\eta$ is positive and singular to $d\theta$.

The harmless convention $\IIm(\varphi(0))=0$ will be used in this paper. Since we focus on the case $\alpha=1$, by the Herglotz representation formula,
\begin{equation}\label{eq:Clark}
\varphi(z)=\frac{h(z)-1}{h(z)+1},\qquad h(z)\coloneqq\int_{\T}\frac{e^{i\theta}+z}{e^{i\theta}-z}\nu_1(d\theta).
\end{equation}

Our first result shows that the singular part of our random function $\varphi$ is almost surely trivial:
\begin{theo}\label{thm1} Let $\gamma\in (0,\sqrt{2}]$ (where $\gamma=\sqrt{2}$ refers to the critical chaos) and let $\mu_\gamma$ be the chaos measure on $\T$ corresponding to a log-correlated field $X$ with the covariance structure \eqref{eq:cov}. Assume that $g\in W^{s,2}(\T^2)$ for some $s>1$. Then, the inner function $\varphi$ defined via \eqref{eq:Clark} with the choice $\nu_1=\mu_\gamma$ is almost surely a Blaschke product.
\end{theo}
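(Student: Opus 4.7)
The plan is to show that the singular inner factor $S$ in the canonical factorization $\varphi = BS$ of the a.s.\ inner function $\varphi$ (by Proposition~\ref{Prop:Inner}) is almost surely trivial. Write $\eta$ for the positive singular measure on $\T$ attached to $S$, and $\{z_n\}$ for the zero set of $\varphi$, which automatically satisfies the Blaschke condition since $\varphi$ is inner. Jensen's formula applied to $\varphi$, combined with monotone convergence, yields
\begin{equation*}
\eta(\T) \;=\; \lim_{r \to 1^-}\Big(-\frac{1}{2\pi}\int_0^{2\pi}\log|\varphi(re^{i\theta})|\, d\theta\Big),
\end{equation*}
and the circle average on the right is monotone non-increasing in $r$ (each zero of $\varphi$ contributes positively to the circle mean of $\log|\varphi|$). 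The theorem thus reduces to showing that this monotone limit equals zero almost surely.

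For a first, easy reduction I would exploit the identity
\begin{equation*}
1 - |\varphi(z)|^2 \;=\; \frac{4\RRe h(z)}{|h(z)+1|^2} \;=\; \RRe h(z)\cdot |1-\varphi(z)|^2,
\end{equation*}
which follows from $\varphi = (h-1)/(h+1)$. Since $\varphi$ is inner, its radial boundary values satisfy $|\varphi^*(e^{i\theta})| = 1$ for Lebesgue-a.e.~$\theta$, so dominated convergence gives $\int_\T (1-|\varphi(re^{i\theta})|^2)\, d\theta \to 0$ almost surely. The elementary inequality $-\log x \le (1-x)/x$ then bounds $-\log|\varphi|^2$ by $(1-|\varphi|^2)/|\varphi|^2$ pointwise, but this is insufficient on its own: the denominator $|\varphi|^2$ can be arbitrarily small near zeros of $\varphi$ that accumulate at $\T$.

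The main obstacle is thus to upgrade the $L^1(\T)$-convergence of $1-|\varphi|^2$ to $L^1(\T)$-convergence of $-\log|\varphi|^2$. My plan is to argue via uniform integrability, seeking a uniform bound $\sup_{r<1} \int_\T (-\log|\varphi(re^{i\theta})|)^p\, d\theta < \infty$ for some $p > 1$, possibly after taking expectations. The decisive ingredient is control of the logarithmic potential generated by zeros of $\varphi$ accumulating at $\T$, which dovetails with the sharp density-of-zero estimates announced in the abstract: equivalently, one controls the near-boundary zeros of the random analytic function $h - 1$. On the chaos side, for $\gamma \in (0,\sqrt{2})$ the measure $\mu_\gamma$ has finite positive moments of all orders $q < 2/\gamma^2$ (note $2/\gamma^2 > 1$), and the Herglotz integral $h$ inherits corresponding integrability; at the critical $\gamma = \sqrt{2}$, the Seneta--Heyde normalization of critical chaos supplies substitute estimates. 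The regularity assumption $g \in W^{s,2}(\T^2)$ with $s > 1$ controls the smooth perturbation of the covariance, e.g.~via absolute continuity with respect to the canonical case $g \equiv 0$, so that the chaos moment estimates above are applicable uniformly in the setup.
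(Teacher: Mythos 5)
Your opening reduction is sound and coincides in spirit with the paper's Lemma~3.3: to kill the singular factor, it suffices to prove a uniform integrability/moment bound on $-\log|\varphi(z)|$ as $|z|\to 1^-$, so that the a.s.\ pointwise convergence $|\varphi(re^{i\theta})|\to 1$ upgrades to convergence of the circle integrals. The identity $1-|\varphi|^2=4\RRe h/|h+1|^2$ and the dominated-convergence step for $\int_\T(1-|\varphi|^2)\,d\theta$ are correct. So far you and the paper are following the same Frostman-type route.

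The gap is in the ``decisive ingredient''. Two problems. First, the quantity $-\log|\varphi(z)|$ is large precisely when $h(z)$ is near $1$, so what you need is an \emph{anti-concentration} estimate (a negative-moment or small-ball bound on $|h(z)-1|$, uniform in $z$), not upper moment bounds on the chaos. Knowing that $\mu_\gamma$ has finite $q$-th moments for $q<2/\gamma^2$ controls $h(z)$ from above but says nothing about how often $h(z)$ sits close to $1$; and appealing to ``the sharp density-of-zero estimates announced in the abstract'' is circular, since Theorem~2 is about the zeros of the function you are trying to prove is a Blaschke product. The paper's crucial idea, absent from your sketch, is to peel off a finite-rank independent Gaussian summand of the field (one Fourier mode $B_1\sin\theta$ in the canonical case, two nontrivial functions $f_1,f_2$ in the general case via Proposition~4.6) and then integrate the conditional log-singularity against the one- or two-dimensional Gaussian in that direction; monotonicity/convexity of $y\mapsto\int e^{\gamma y f(\theta)-\cdots}P_z\,d\widetilde\mu$ turns the log-singularity into a controlled integral (Lemma~3.5 and Proposition~4.7). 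This conditioning/integration step is the heart of the proof and it is missing.

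Second, the claim that $g\in W^{s,2}(\T^2)$, $s>1$, gives ``absolute continuity with respect to the canonical case $g\equiv 0$'' is false. Two Gaussian fields whose covariances differ by a nonzero smooth kernel are in general mutually singular (this is a change of covariance, not a Cameron--Martin shift). The paper instead uses the regularity of $g$ to prove the existence of a rank-two independent summand (Lemma~4.9 through the spectral analysis of $C_1^{-1/2}AC_1^{-1/2}$), and separately uses Kahane's convexity inequality (Remark~2.7) to transfer moment \emph{comparisons}, not absolute continuity.

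In short, your reduction to uniform integrability is on target and mirrors the paper, but your proposed mechanism for proving the needed uniform bound (upper moment estimates for chaos plus an absolute-continuity appeal) does not work; the actual argument requires the Gaussian-decomposition/conditioning device, which your proposal does not contain.
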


This result can be thought of as a random analogue of a famous result of Frostman \cite[Theorem~6.4]{garnett2007bounded}, which states that given an inner function $f$, for almost every $a\in\D$ the ``Frostman shift'' $\tau_a\circ f$ is a pure Blaschke product. Here $\tau_a$ is the M\"obius automorphism $\tau_a:\D\to\D$, and $\tau_a(z)=\frac{a-z}{1-\overline{a}z}.$

We also provide an estimate for the density of the zeroes of the random Blaschke product $\varphi$:
\begin{theo}\label{thm2}
Assume that $g\in W^{2,2}(\T^2)$ in~\eqref{eq:cov} and let $z_1,z_2,\dots$ stand for the zeroes of $\varphi$. Then we have the following phase transition for the density of zeroes:
\begin{enumerate}
    \item Almost surely, $\sum_{k=1}^{\infty} (1-|z_k|)^\beta<\infty$ for $1-\gamma^2/8<\beta$.
    \item Almost surely, $\sum_{k=1}^{\infty} (1-|z_k|)^{\beta}=\infty$ for $1-\gamma^2/8>\beta$.
\end{enumerate}
\end{theo}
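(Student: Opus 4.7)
The critical exponent $\beta_c = 1 - \gamma^2/8$ comes from the multifractal structure of $\mu_\gamma$. Indeed, a zero $z_k = r_k e^{it_k}$ of $\varphi$ forces $h(z_k) = 1$ and hence $(P_{r_k}*\mu_\gamma)(t_k) = 1$, which to leading order imposes the ``linear thickness'' condition $\mu_\gamma(B(t_k, 1-r_k)) \asymp 1-r_k$. The set of boundary points admitting such thickness is the $(\gamma^2/2)$-thick point set of $\mu_\gamma$, of Hausdorff dimension $1 - \gamma^2/8$; this dimension also controls Minkowski-type sums $\sum(1-|z_k|)^\beta$. My plan is to turn this picture into rigorous bounds via a Carleson-box decomposition of $\D$ indexed by dyadic arcs of $\T$.

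\textbf{Upper bound, part (1).} Let $M_n := \#\{k : 1-|z_k| \in [2^{-n-1}, 2^{-n}]\}$, so that $\sum_k (1-|z_k|)^\beta \asymp \sum_n 2^{-n\beta} M_n$. Cover the dyadic annulus by $\asymp 2^n$ Carleson boxes $Q_I$ with $|I| \asymp 2^{-n}$. Expanding the Poisson kernel dyadically, any $z \in Q_I$ with $h(z) = 1$ forces $\sup_{k \ge 0} \mu_\gamma(B(\arg z, 2^k|I|))/(2^k|I|) \gtrsim 1$, so some enlarged dyadic arc $I^{(k)}\supset I$ satisfies $\mu_\gamma(I^{(k)}) \ge c|I^{(k)}|$. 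Jensen's formula applied to $h-1$ on a disk slightly larger than $Q_I$ bounds the number of zeros in $Q_I$ by $O\bigl(1 + \log^+ \sup_k \mu_\gamma(I^{(k)})/|I^{(k)}|\bigr)$. The subcritical GMC moment estimate $\mathbb{E}[\mu_\gamma(I)^q] \lesssim |I|^{\xi(q)}$, with $\xi(q) = (1+\gamma^2/2)q - \gamma^2 q^2/2$, together with Markov at the optimal $q=1/2$, yields
\begin{equation*}
\mathbb{P}[\mu_\gamma(I) \ge c|I|] \;\lesssim\; |I|^{\xi(1/2) - 1/2} = |I|^{\gamma^2/8}
\end{equation*}
(with at most a logarithmic loss at the critical $\gamma = \sqrt{2}$). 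Summing over the $\asymp 2^n$ arcs gives $\mathbb{E}[M_n] \lesssim 2^{n(1-\gamma^2/8) + o(n)}$, and therefore
\begin{equation*}
\mathbb{E}\sum_k (1-|z_k|)^\beta \;\lesssim\; \sum_n 2^{-n(\beta - 1 + \gamma^2/8) + o(n)} \;<\; \infty
\end{equation*}
for every $\beta > 1 - \gamma^2/8$, proving (1) almost surely.

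\textbf{Lower bound, part (2), and main obstacle.} A second-moment/Paley--Zygmund argument applied to the count of dyadic arcs $I$ of length $2^{-n}$ satisfying the \emph{two-sided} thickness $\mu_\gamma(I)/|I|\in [c_1, c_2]$ shows that this count is at least $c\,2^{n(1-\gamma^2/8)}$ with probability bounded below uniformly in $n$; the matching lower bound $\mathbb{P}[\mu_\gamma(I)/|I|\in[c_1,c_2]] \gtrsim |I|^{\gamma^2/8}$ follows from the GMC multifractal formalism (two-point correlation computations). The decisive step is then to convert each thick arc into a genuine zero of $\varphi$ inside (an enlargement of) $Q_I$: this is a Rouché/argument-principle argument on $h-1$, which requires that $\RRe h - 1$ changes sign on $\partial Q_I$ while $\IIm h$ has sufficiently tame oscillation for the winding of $h-1$ around $0$ to be at least $1$. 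The regularity hypothesis $g\in W^{2,2}(\T^2)$ enters precisely here, providing the small-scale smoothness of the log-correlated field $X$ and of its harmonic conjugate needed to control $\IIm h$. Once the thick-to-zero step is in place, summing over thick arcs gives $M_n \gtrsim 2^{n(1-\gamma^2/8)}$ with positive probability uniformly in $n$, so $\sum_k (1-|z_k|)^\beta = \infty$ for $\beta < 1 - \gamma^2/8$ with positive probability, upgraded to almost sure divergence by a Kolmogorov $0$-$1$ law applied to the tail event (measurable with respect to the infinitesimal-scale behaviour of $\mu_\gamma$). I expect the principal obstacle to be exactly this passage from a probabilistic thickness statement about $\mu_\gamma$ to the deterministic existence of zeros of $\varphi$, since it is where the analytic structure of $h$, and not merely moment estimates for $\mu_\gamma$, must be exploited.
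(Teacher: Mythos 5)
Your heuristic correctly identifies the critical exponent and even the multifractal mechanism, but your proof route diverges substantially from the paper's and contains two genuine gaps. The paper never counts zeros directly: its key structural lemma is that for a Blaschke product, $\sum_k(1-|z_k|)^\beta<\infty$ if and only if $\int_{\D}(1-|z|^2)^{\beta-2}\log(1/|\varphi(z)|)\,dA(z)<\infty$ (a Green's-function identity, Lemma~\ref{lemm:densityupperbound}). This reduces both directions of Theorem~\ref{thm2} to controlling $\mathbb{E}\log(1/|\varphi(z)|)$ from above (for part (1)) and $\log(1/|\varphi(z)|)$ pathwise from below on a set of positive area (for part (2)), and completely bypasses the Rouch\'e/winding-number problem you flag as the principal obstacle in part (2).

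For part (1), your Carleson-box + Jensen argument has a gap that mirrors the central difficulty: Jensen's formula on a disk enlarging $Q_I$ gives $\#\{\text{zeros}\}\lesssim\log\sup_{\partial}|h-1|-\log|h(z_0)-1|$, so you need a \emph{lower} bound on $|h(z_0)-1|$ at the reference point, not just an upper bound on $|h-1|$ on the boundary circle. Since $h(z_0)$ can be arbitrarily close to $1$ precisely in the interesting regime (when $x(z_0)\approx1$), this is not an $O(1)$ term and cannot be dismissed. Controlling this logarithmic singularity is exactly what the paper's Proposition~\ref{prop:LogSingularity} does, via a perturbative decomposition $X=V_1f_1+V_2f_2+\widetilde X$ and a geometric argument about sublevel sets of convex functions satisfying $\Delta u\geq\kappa u$, $|Du|\leq Ku$. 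Your guess that $g\in W^{2,2}(\T^2)$ enters in a Rouch\'e step for part (2) is also off: in the paper it is used in Proposition~\ref{prop:RankTwoAssumption} to construct the two independent Gaussian directions $f_1,f_2$ without common zeros, which is needed for the upper-bound perturbation argument, not the lower bound.

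For part (2), the paper does not produce individual zeros at all. It takes the thick-point set $G_0$ with $\dim_{\mathcal H}G_0=1-\gamma^2/8$ (from the GMC multifractal spectrum), thickens it to a set $H\subset\D$ with $|H\cap\{|z|=r\}|\gtrsim(1-r)^{\gamma^2/8+\delta}$, and shows pathwise that on $H$ both $x(z)=(1-|z|)^{\pm o(1)}$ and $|y(z)|\lesssim(1-|z|)^{-o(1)}$, hence $\log(1/|\varphi(z)|)\gtrsim(1-|z|)^{o(1)}$ on $H$. Feeding this into the integral criterion gives divergence for $\beta<1-\gamma^2/8$ almost surely — no Paley--Zygmund, no Rouch\'e, no $0$-$1$ law needed. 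The lesson worth internalizing is that in both directions the integral Blaschke condition trades a hard discrete zero-counting problem for a soft area-integral estimate; your correctly-identified obstacle in part (2) signals that the discrete route is the wrong one to pursue.
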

The above result is slightly surprising in the sense that the density of the zeroes goes up when the parameter $\gamma$ of the chaos measure decreases. On the other hand, it is natural when one notes that in a heuristic way the support of $\mu_\gamma$ ``decreases'' in a sense when $\gamma$ increases. One may ask what happens in the supercritical case, especially how does the analogue of Theorem \ref{thm2} look like for $\gamma >\sqrt{2}$. One of the  questions that we plan to study in the future is how the zero set behaves realization wise as the parameter $\gamma$ varies.

The present note is a part of our long-term project to study some aspects of random spectral theory via Clark measures. We thank Alexei Poltoratski for reawakening our interest in these questions, and especially we are grateful to discussions with H\aa kan Hedenmalm who independently suggested us (during a visit to the University of Helsinki a couple of years ago) that one should expect a result to the direction of Theorem~\ref{thm1} above.

\section{Log-correlated Gaussian fields and Gaussian multiplicative chaos}\label{sec:chaos}
In the following, we consider several random (generalized) functions defined on the unit disc $\mathbb{D}\subset\mathbb{R}^2$, on the unit circle $\T=\partial\mathbb{D}$, or on an interval of the real axis, say $\left[-\frac{1}{2},\frac{1}{2}\right]\subset\mathbb{R}$. We will usully denote by $X,Y,Z,\dots$ some log-correlated Gaussian fields. The parameter $\gamma\in\left(0,\sqrt{2}\right)$ will be fixed and $\mu\coloneqq\mu_\gamma$ will denote the (subcritical) Gaussian multiplicative chaos measure with parameter $\gamma$. The critical case $\gamma=\sqrt{2}$ will be recalled separately in the proofs.

Since this note is intended for readers from both probability and analysis backgrounds, we recall some definitions and basic facts about log-correlated Gaussian fields and Gaussian multiplicative chaos.

\subsection{Log-correlated Gaussian fields on the unit circle}
Consider the Gaussian field $X_c$ on the unit circle $\T$ defined by a random Fourier series:
\begin{equation}\label{eq:1}
X_c(\theta)=\sum\limits_{n=1}^{\infty}n^{-\frac{1}{2}} (A_n\cos(n\theta)+B_n\sin(n\theta)), \qquad \theta\in [0,2\pi), 
\end{equation}
where $A_n, B_n$ and i.i.d. standard Gaussian variables. We refer to this particular log-correlated field as the \emph{canonical field} (on $\T$). Let us recall why the  Gaussian field $X_c(\theta)$ is log-correlated -- actually its covariance kernel writes in the simple form
\begin{equation}\label{eq:canonicalcovariance}
C(\theta,\theta')\coloneqq\mathbb{E}\left[X(\theta)X(\theta')\right]=-\log\left|e^{i\theta}-e^{i\theta'}\right|.
\end{equation}
Indeed, a  direct calculation yields
\begin{equation*}
C(\theta,\theta')=\sum\limits_{n=1}^{\infty}\frac{1}{n}\cos(n(\theta-\theta')).
\end{equation*}
To check that this is the same as $-\log|e^{i\theta}-e^{i\theta'}|$, write
\begin{equation*}
-\log\left|e^{i\theta}-e^{i\theta'}\right|=-\log\left|1-e^{i(\theta-\theta')}\right|=-\frac{1}{2}\log\left(1-e^{i(\theta-\theta')}\right)-\frac{1}{2}\log\left(1-e^{-i(\theta-\theta')}\right)
\end{equation*}
and develop the Taylor series.

Due to the divergence of $-\log\left|e^{i\theta}-e^{i\theta'}\right|$ on the diagonal $\theta=\theta'$, the Gaussian field $X_c$ cannot be defined as a random function, but only as a random distribution in the sense of Schwartz. Indeed, one verifies that the field $X_c$ almost surely lives in the negative order 
Sobolev space $W^{-s,2}$ for all $s>0$. Recall that the Sobolev spaces $W^{s,2}$ for $s\in \R$ are defined as
\begin{equation*}
    W^{s,2}(\T)\coloneqq\left\{f\in L^2(\T)~;~||f||_{s,2}^{2}\coloneqq\sum\limits_{n=-\infty}^{\infty}\left(1+|n|^2\right)^{s}|\hat{f}(n)|^2<\infty\right\}.
\end{equation*}
Then $W^{-s,2}$ and $W^{s,2}$ are mutual duals under the natual pairing. For relevant properties of these classical function spaces $W^{s,2}$ with $s\in\mathbb{R}$ in relation to Gaussian multiplicative chaos, we refer to \cite[Section~2.2]{junnila2019decompositions}.

We will consider more general log-correlated covariance kernels of the form
\begin{equation*}
    K(\theta,\theta')=C(\theta,\theta')+g(\theta,\theta')
\end{equation*}
with $g\in W^{s,2}(\T^2)$ for some $s>1$. The Gaussian field $X$ on $\T$ with kernel $K$ is also called a log-correlated Gaussian field, and the above regularity remark applies also to $X$.

\subsection{Gaussian multiplicative chaos on the unit circle}
Fix $\gamma\in(0,\sqrt{2})$. The theory of Gaussian multiplicative measures developed by Kahane \cite{kahane1985chaos} as well as its recent developments take care of rigorously exponentiating a log-correlated Gaussian field such as $\gamma X$.

\begin{defi}[Gaussian multiplicative chaos measures]\label{prop:GMCdefinition}
Let $Y$ denote any log-correlated Gaussian field on $\T$ with covariance kernel
\begin{equation*}
K_Y(e^{i\theta},e^{i\theta'})=-\log\left|e^{i\theta}-e^{i\theta'}\right|+g\left(e^{i\theta},e^{i\theta'}\right)
\end{equation*}
where $g$ is a (symmetric) continuous function on $\T\times\T$.

Let $Y_{\varepsilon}$ be a standard $\varepsilon$-mollification  of $Y$ using a compact and smooth test function.  The Gaussian multiplicative chaos measure $\mu_Y$ on $\T$, associated to $Y$ with parameter $\gamma\in\left(0,\sqrt{2}\right)$, is defined as the following limit taken in probability:
\begin{equation*}
d\mu_Y(\theta)\coloneqq \lim\limits_{\varepsilon\to 0}e^{\gamma Y_\varepsilon(e^{i\theta})-\frac{\gamma^2}{2}\mathbb{E}\left[Y_\varepsilon(e^{i\theta})^2\right]}d\theta.
\end{equation*}
Here the convergence of the measures is  in the sense of weak$^*$ convergence, i.e. for any continuous test function $\phi:\T\to\mathbb{R}$ one has 
\begin{equation*}
\int_{\T}\phi(e^{i\theta})d\mu_Y(\theta)=\lim\limits_{\varepsilon\to 0}\int_{\T}e^{\gamma Y_\varepsilon(e^{i\theta})-\frac{\gamma^2}{2}\mathbb{E}\left[Y_\varepsilon(e^{i\theta})^2\right]}\phi(e^{i\theta})d\theta.
\end{equation*}

The Gaussian multiplicative chaos measure $\mu_Y$ is unique (i.e. does not depend on the mollification) and non-trivial for all $\gamma^2<2$ (we refer to this as the subcritical regime) and degenerate (i.e. almost surely zero everywhere) if $\gamma^2\geq 2$.
\end{defi}
The convergence in law or in probability for  Gaussian multiplicative chaos measures (i.e. that the above definition makes sense) was established  essentially in \cite{kahane1985chaos}, and later on the theory has been developed in many works, e.g. \cite{bacry2003log, robert2010gaussian, SHAMOV20163224, Berestycki_2017}.

\begin{rema}[$1d$-Gaussian multiplicative chaos measures]
The more classical setting, completely similar to the above proposition, defines the Gaussian multiplicative chaos measures on a compact subset of the real line $\mathbb{R}$. In this note, we will use mostly the interval $\left[-\frac{1}{2},\frac{1}{2}\right]\in\mathbb{R}$. Both of them are $1d$-Gaussian multiplicative chaos measures, as are the chaos measures defined on $\T$.
\end{rema}

We first record some important known properties of the Gaussian multiplicative chaos measures in one-dimension, and we will provide references after stating the results.
\begin{fact}[Existence of moments]\label{fact:Positivemoment}
Let $Y$ be a log-correlated Gaussian field on $\T$ or $\left[-\frac{1}{2},\frac{1}{2}\right]$ and $\mu_Y$ the associated Gaussian multiplicative chaos measure with $\gamma\in\left(0,\sqrt{2}\right)$ as in Definition~\ref{prop:GMCdefinition}. Then for all sets $T\subset \T$ or $T\subset\left[-\frac{1}{2},\frac{1}{2}\right]$ with non-empty interior,
\begin{equation*}
\mathbb{E}\left[\mu_Y(T)^{p}\right]<\infty
\end{equation*}
if and only if $p<\frac{2}{\gamma^2}$ (this includes finiteness of all  negative moments).
\end{fact}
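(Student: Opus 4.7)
The result is classical in the GMC literature, originating with Kahane \cite{kahane1985chaos} and refined by many subsequent authors (e.g.\ \cite{robert2010gaussian,Rhodes_2014}). The plan is to reduce a general log-correlated covariance to a reference star-scale invariant kernel using Kahane's convexity inequality, and then extract the threshold $p_c=2/\gamma^2$ from the self-similarity of the reference model. Since the kernels $K$ and $-\log|e^{i\theta}-e^{i\theta'}|$ differ only by a continuous symmetric function $g$, Kahane's inequality sandwiches $\mathbb{E}[\mu_Y(T)^p]$ between constants times the corresponding moments of any star-scale invariant model with the same $\gamma$, so the finiteness question reduces to a computation in the reference model.

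For positive moments with $p\leq 1$, the finiteness is immediate from $\mathbb{E}[\mu_Y(T)]<\infty$ and Jensen's inequality. For $1<p<2/\gamma^2$ I would work in the reference model and exploit the exact scaling
\begin{equation*}
\mu_Y(\lambda T) \stackrel{d}{=} \lambda^{1+\gamma^2/2}\, e^{\gamma\Omega_\lambda}\, \widetilde{\mu}_Y(T),
\end{equation*}
where $\Omega_\lambda\sim\mathcal{N}(0,\log(1/\lambda))$ is independent of the identically distributed copy $\widetilde{\mu}_Y$. Decomposing $T$ into $N=\lambda^{-1}$ scaled copies, taking $p$-th moments of the regularised measure $\mu_{Y,\varepsilon}$, and invoking a Rosenthal-type inequality for sums of independent positive random variables yields a recursion of the form
\begin{equation*}
f_\varepsilon(p) \leq C_p\bigl(D(p) + \lambda^{\xi(p)-1} f_\varepsilon(p)\bigr),
\end{equation*}
with $\xi(p)\coloneqq p(1+\gamma^2/2)-\gamma^2p^2/2$ the multifractal exponent and $D(p)<\infty$ an explicit constant. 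Choosing $\lambda$ small enough so that $C_p\lambda^{\xi(p)-1}<1$ --- which is possible exactly when $\xi(p)>1$, i.e.\ $p<2/\gamma^2$ --- yields a uniform bound on $f_\varepsilon(p)$, and passing to the limit $\varepsilon\to 0$ concludes.

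For $p\geq 2/\gamma^2$ the same decomposition used as a \emph{lower} bound (retaining a single term via $(a+b)^p\geq a^p$, together with independence) gives $\mathbb{E}[\mu_Y(T)^p]\geq \lambda^{\xi(p)-1}\mathbb{E}[\mu_Y(T)^p]$ with $\lambda^{\xi(p)-1}\geq 1$; iterating forces divergence. A cleaner alternative extracts a $\gamma$-thick point of $Y$ and uses the associated lower bound on $\mu_Y$. For the negative moments one writes
\begin{equation*}
\mathbb{E}[\mu_Y(T)^{-p}] = \frac{1}{\Gamma(p)}\int_0^\infty t^{p-1}\,\mathbb{E}[e^{-t\mu_Y(T)}]\,dt,
\end{equation*}
and proves super-polynomial decay of the Laplace transform as $t\to\infty$; splitting $T$ into many nearly-independent sub-pieces via scale invariance and estimating the probability that all of them are simultaneously small gives the required bound.

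The main technical obstacle is not any single estimate but the Kahane reduction itself: one must produce a star-scale invariant reference field (e.g.\ of Bacry--Muzy type) whose kernel matches $-\log|e^{i\theta}-e^{i\theta'}|$ up to a bounded continuous function, so that Kahane's inequality applies in both directions; once this framework is in place, all subsequent estimates reduce to explicit computations with Gaussian integrals and elementary recursion.
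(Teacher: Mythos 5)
The paper does not prove this statement; it simply cites \cite[Theorem~2.5 and~2.11]{Rhodes_2014} (for the positive and negative moment thresholds respectively), relying on Kahane's convexity inequality exactly as you do to transfer between log-correlated kernels that differ by a bounded continuous $g$. Your outline reproduces the standard argument found there and in the earlier literature, so at the level of strategy there is nothing to compare against.

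One point worth tightening: when you decompose $T$ into $N=\lambda^{-1}$ scaled sub-pieces and invoke ``a Rosenthal-type inequality for sums of independent positive random variables,'' the sub-pieces $\mu_Y(T_i)$ are \emph{not} independent for a GMC measure built from a single continuous field; only the dyadic cascade has genuine independence across branches. The standard repair is either (a) use Kahane's inequality a second time to sandwich between the GMC and a multiplicative cascade, where independence holds and the Kahane--Peyri\`ere recursion $\xi(p)>1\Leftrightarrow p<2/\gamma^2$ is literal, or (b) in a star-scale (Bacry--Muzy cone) construction, condition on the coarse scales so that the fine-scale contributions become conditionally independent, and then run the recursion conditionally. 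As written your sketch glosses over this, but the fix is routine and does not change the threshold. The negative-moment argument via the Laplace transform is likewise the standard one, and the convexity transfer applies there as well since $x\mapsto e^{-tx}$ is convex.
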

Especially one may note that in the subcritical regime $\gamma^2<2$, the first moment of the mass of a $1d$-Gaussian multiplicative chaos measure always exists.

\begin{fact}[Exact scaling log-correlated fields]\label{fact:ScalingPurelog}
Consider the log-correlated Gaussian field $Z$ on the interval $\left[-\frac{1}{2},\frac{1}{2}\right]$ with covariance kernel:
\begin{equation*}
K_Z(\theta,\theta')=\log\frac{1}{|\theta-\theta'|}.
\end{equation*}
Then $Z$ is (locally) translation invariant and exact scale invariant: in particular, for all $0<r<1$, the following Gaussian fields are equal in law:
\begin{equation*}
\left(Z(\theta)+\sqrt{-\log r}N\right)_{\theta\in\left[-\frac{1}{2},\frac{1}{2}\right]}\qquad\text{and}\qquad\left(Z\left(r^{-1}\theta\right)\right)_{\theta\in\left[-\frac{r}{2},\frac{r}{2}\right]}.
\end{equation*}
where $N$ is a standard Gaussian independent of the field $Z$.

As a consequence, the following equality in law holds for the Gaussian multiplicative chaos measure $\mu_{Z}$ associated to the field $Z$ with parameter $\gamma\in\left(0,\sqrt{2}\right)$ and all $[x,x+r]\subset \left[-\frac{1}{2},\frac{1}{2}\right]$:
\begin{equation*}
\int_{[x,x+r]}d\mu_{Z}(\theta)\sim r^{1+\gamma^2/2}e^{\gamma \sqrt{-\log r}N}\int_{\left[-\frac{1}{2},\frac{1}{2}\right]}d\mu_Z(\theta).
\end{equation*}
We refer to the last identity as ``exact scaling'' relation in the sequel.
\end{fact}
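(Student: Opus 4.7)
The three assertions split cleanly: translation invariance, exact scale invariance at the level of the field, and the induced scaling for the chaos measure.

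For translation invariance on the interval, the covariance $K_Z(\theta,\theta')=\log(1/|\theta-\theta'|)$ depends only on $\theta-\theta'$, so any two centered finite-dimensional Gaussian marginals $(Z(\theta_1),\dots,Z(\theta_n))$ and $(Z(\theta_1+t),\dots,Z(\theta_n+t))$ share the same covariance matrix, hence the same law, whenever both sit inside the domain of definition. This yields the stationarity statement immediately.

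For the exact scaling of the field itself, I would compute covariances of both sides directly. The right-hand side process $(Z(r^{-1}\theta))_{\theta\in[-r/2,r/2]}$ is centered Gaussian with
\begin{equation*}
\mathbb{E}\bigl[Z(r^{-1}\theta)Z(r^{-1}\theta')\bigr]\;=\;\log\frac{1}{|r^{-1}\theta-r^{-1}\theta'|}\;=\;\log\frac{1}{r}+\log\frac{1}{|\theta-\theta'|}.
\end{equation*}
Reparametrizing by $s=r^{-1}\theta\in[-1/2,1/2]$ (which only relabels the index set), the left-hand side process $s\mapsto Z(s)+\sqrt{-\log r}\,N$ has covariance
\begin{equation*}
\log\frac{1}{|s-s'|}+(-\log r)\cdot\mathbb{E}[N^2]\;=\;\log\frac{1}{|s-s'|}-\log r,
\end{equation*}
which matches. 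Since both processes are centered Gaussian, equality of covariances gives equality in law.

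For the induced scaling of the chaos measure, the plan is to perform the change of variables $\theta=x+r s$ inside the regularized approximation of $\mu_Z$ and then take $\varepsilon\to 0$. Concretely, for $s\in[-1/2,1/2]$ one has (formally) $Z(x+rs)\stackrel{d}{=}Z(s)+\sqrt{-\log r}\,N$ by the first two parts, and $\mathbb{E}[Z_\varepsilon(x+rs)^2]-\mathbb{E}[Z_\varepsilon(s)^2]\to -\log r$ as $\varepsilon\to 0$. Combining these with the Jacobian $r$ of the substitution yields
\begin{equation*}
\int_{[x,x+r]}d\mu_Z(\theta)\;=\;\int_{-1/2}^{1/2}r\,e^{\gamma Z(x+rs)-\tfrac{\gamma^2}{2}\mathbb{E}[Z(x+rs)^2]}\,ds\;\stackrel{d}{=}\;r\cdot r^{\gamma^2/2}\,e^{\gamma\sqrt{-\log r}\,N}\int_{-1/2}^{1/2}d\mu_Z(s),
\end{equation*}
which is the stated identity with exponent $1+\gamma^2/2$. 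The expected obstacle is making the last display fully rigorous through the mollification: a generic compactly supported mollifier does not respect the scaling exactly, so $Z_\varepsilon(x+rs)$ is a mollification of $Z$ at scale $\varepsilon/r$ rather than $\varepsilon$. The cleanest way around this is to invoke the Shamov/Berestycki characterization of GMC as a measurable functional of the underlying field, so that the scaling relation for $Z$ transfers to $\mu_Z$ without revisiting the cutoff; alternatively, one can choose a scale-compatible cutoff (e.g.\ a white-noise cone construction on $[-1/2,1/2]$), for which the exact scaling already holds before taking the limit, and then use uniqueness of the limit measure.
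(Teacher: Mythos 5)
The paper does not prove this fact directly: the combined proof environment for Facts 2--4 and Lemma~8 simply cites the Rhodes--Vargas survey (specifically [RV14, Theorem 2.11] for this Fact). So you are filling in an argument the authors chose to outsource, and the overall structure (covariance comparison for the field identity, change of variables plus scale invariance for the chaos identity) is the right one.

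However, the covariance comparison as written would not survive careful scrutiny. You claim
\begin{equation*}
\log\frac{1}{|r^{-1}\theta-r^{-1}\theta'|}=\log\frac{1}{r}+\log\frac{1}{|\theta-\theta'|},
\end{equation*}
but since $|r^{-1}\theta-r^{-1}\theta'|=r^{-1}|\theta-\theta'|$ the correct value is $\log r+\log\frac{1}{|\theta-\theta'|}=-\log\frac{1}{r}+\log\frac{1}{|\theta-\theta'|}$, i.e.\ the opposite sign. Moreover, you then compare this (as a function of $\theta,\theta'$) against the left-hand covariance written in terms of $s,s'$ with $s=r^{-1}\theta$, implicitly treating $|\theta-\theta'|$ and $|s-s'|$ as identical even though $|s-s'|=r^{-1}|\theta-\theta'|$; this second slip cancels the first, producing an apparent match. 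If both steps are carried out correctly, the covariances of the two displayed processes do \emph{not} agree. The robust form of the identity -- and the one you actually invoke later when writing $Z(x+rs)\stackrel{d}{=}Z(s)+\sqrt{-\log r}\,N$ -- is $\left(Z(r\theta)\right)_{\theta}\stackrel{d}{=}\left(Z(\theta)+\sqrt{-\log r}\,N\right)_{\theta}$ on $[-1/2,1/2]$, which is also exactly the form used in the paper's Appendix. Relatedly, for a translation-invariant mollifier one has $\mathbb{E}[Z_\varepsilon(x+rs)^2]=\mathbb{E}[Z_\varepsilon(s)^2]$ identically (both are $\log(1/\varepsilon)+C$), so your intermediate claim that their difference tends to $-\log r$ is false as stated; the factor $r^{\gamma^2/2}$ arises only after one tracks the change of mollification scale $\varepsilon\mapsto\varepsilon/r$, which you do correctly identify as the obstacle in your closing remarks. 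Both of your proposed fixes (Shamov's measurability characterization, or a scale-compatible cutoff) are standard and sound, and the paper's Appendix indeed opts for a scale-invariant mollifier.
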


\begin{fact}[Kahane's convexity inequality]\label{fact:ScalingKahane}
Let $Y_1$, $Y_2$ be two continuous centered Gaussian processes indexed by $\T$ or on any interval $I\subset\R$, such that for all $\theta,\theta'$,
\begin{equation*}
\mathbb{E}\left[Y_1(\theta)Y_1(\theta')\right]\leq \mathbb{E}\left[Y_2(\theta)Y_2(\theta')\right].
\end{equation*}

Then, for all convex function $F$ with at most polynomial growth at infinity and any positive finite measure $\sigma$ on $\T$ or on $I$,
\begin{equation*}
\mathbb{E}\left[F\left(\int e^{Y_1(\theta)-\mathbb{E}[Y_1(\theta)^2]}d\sigma(\theta)\right)\right]\leq \mathbb{E}\left[F\left(\int e^{Y_2(\theta)-\mathbb{E}[Y_2(\theta)^2]}d\sigma(\theta)\right)\right].
\end{equation*}
\end{fact}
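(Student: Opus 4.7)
The plan is to establish the inequality via a Gaussian interpolation of Slepian--Kahane type. First I would pass to an enlarged probability space where $Y_1$ and $Y_2$ are independent, and for $t\in[0,1]$ introduce the interpolating field $Z_t\coloneqq\sqrt{1-t}\,Y_1+\sqrt{t}\,Y_2$, whose covariance is $(1-t)K_1+tK_2$ (with $K_i$ the covariance of $Y_i$) and which agrees in law with $Y_1$ at $t=0$ and with $Y_2$ at $t=1$. Writing $A_t(\theta)\coloneqq e^{Z_t(\theta)-\frac{1}{2}\mathbb{E}[Z_t(\theta)^2]}$ (the standard normalization; the factor in the Fact statement is immaterial since it cancels from both sides),
\begin{equation*}
M_t\coloneqq\int A_t(\theta)\,d\sigma(\theta),\qquad\phi(t)\coloneqq\mathbb{E}[F(M_t)],
\end{equation*}
the claim reduces to $\phi(0)\le\phi(1)$, which I would obtain by showing $\phi'(t)\ge 0$ on $(0,1)$.

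Before differentiating, I would reduce to a finite-dimensional smooth setting by standard approximations: replace $\sigma$ by a finite atomic measure $\sigma_n=\sum_i p_i\delta_{\theta_i}$, so that $M_t=\sum_i p_i A_t(\theta_i)$ becomes a finite sum, and regularize $F$ to a smooth convex $F_\varepsilon$ by convolution with a mollifier (preserving convexity). In this reduced setting, a direct differentiation together with Gaussian integration by parts, using the identities $\mathbb{E}[Y_1(\theta)Z_t(\theta')]=\sqrt{1-t}\,K_1(\theta,\theta')$ and $\mathbb{E}[Y_2(\theta)Z_t(\theta')]=\sqrt{t}\,K_2(\theta,\theta')$, absorbs the singular $1/\sqrt{t}$ and $1/\sqrt{1-t}$ prefactors arising from $\partial_t Z_t$. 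After carrying out this calculation one finds that the diagonal $F_\varepsilon'$-contributions cancel exactly against the It\^o-type drift $-\tfrac{1}{2}\partial_t\mathbb{E}[Z_t(\theta_i)^2]$, and one is left with the clean formula
\begin{equation*}
\phi'(t)=\frac{1}{2}\sum_{i,j}p_ip_j\bigl(K_2(\theta_i,\theta_j)-K_1(\theta_i,\theta_j)\bigr)\,\mathbb{E}\bigl[F_\varepsilon''(M_t)\,A_t(\theta_i)A_t(\theta_j)\bigr].
\end{equation*}

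Every summand is non-negative: $K_2-K_1\ge 0$ pointwise by hypothesis, $A_t(\theta_i)A_t(\theta_j)>0$, and $F_\varepsilon''\ge 0$ since $F_\varepsilon$ is convex. Hence $\phi'(t)\ge 0$, and the inequality for the original $\sigma$ and $F$ follows by undoing the approximations: weak$^*$ approximation of $\sigma$ and $\varepsilon\downarrow 0$ together with the polynomial growth of $F$ and uniform control on positive moments of the approximating sums (which are finite Gaussian exponentials). The main obstacle I see is the Gaussian integration by parts step, in particular the precise bookkeeping of how the $1/\sqrt{t}$, $1/\sqrt{1-t}$ singularities in $\partial_t Z_t$ cancel with factors $\sqrt{1-t}\,K_1$, $\sqrt{t}\,K_2$ produced by Stein's lemma applied to $F'(M_t)A_t(\theta_i)$; once the explicit formula for $\phi'(t)$ above is in hand, the monotonicity and the approximation arguments are routine.
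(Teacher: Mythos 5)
The paper does not prove this Fact; it only cites \cite[Theorem~2.1]{Rhodes_2014}, so there is no internal proof to compare against. Your Gaussian interpolation argument \emph{is} the standard route (it is essentially Kahane's original proof, also reproduced in Rhodes--Vargas), and the computation you defer checks out: writing $\Psi_i\coloneqq F'(M_t)A_t(\theta_i)$ and applying Stein's lemma in the form $\mathbb{E}[Y_\ell(\theta_i)\Psi_i]=\sum_j\mathbb{E}[Y_\ell(\theta_i)Z_t(\theta_j)]\,\mathbb{E}\bigl[\partial\Psi_i/\partial Z_t(\theta_j)\bigr]$ with $\mathbb{E}[Y_1(\theta_i)Z_t(\theta_j)]=\sqrt{1-t}\,K_1(\theta_i,\theta_j)$ and $\mathbb{E}[Y_2(\theta_i)Z_t(\theta_j)]=\sqrt{t}\,K_2(\theta_i,\theta_j)$, the $1/\sqrt{t}$ and $1/\sqrt{1-t}$ prefactors from $\partial_t Z_t$ cancel as advertised, the off-diagonal terms produce $\tfrac12\sum_{i,j}p_ip_j(K_2-K_1)(\theta_i,\theta_j)\,\mathbb{E}[F''(M_t)A_t(\theta_i)A_t(\theta_j)]$, and the diagonal $F'$-terms cancel exactly against the $-\tfrac12\partial_t\mathbb{E}[Z_t(\theta_i)^2]$ drift. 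All three factors $p_ip_j$, $K_2-K_1\geq0$ pointwise, and $F''\geq 0$ give $\phi'\geq 0$, and the finite-atomic and smoothing reductions are routine.

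One small inaccuracy in the write-up: the parenthetical claim that the choice between $e^{Y-\frac12\mathbb{E}[Y^2]}$ and $e^{Y-\mathbb{E}[Y^2]}$ ``is immaterial since it cancels from both sides'' is not correct reasoning. The two normalizations differ by the deterministic weight $e^{-\frac12\mathbb{E}[Y_i(\theta)^2]}$, which is in general \emph{different} for $Y_1$ and $Y_2$ (only the inequality $\mathbb{E}[Y_1^2]\leq\mathbb{E}[Y_2^2]$ is assumed, not equality), so it does not cancel. Moreover, if one actually ran the interpolation with the normalization $e^{Z_t-\mathbb{E}[Z_t^2]}$, the drift term would carry a full $-\partial_t\mathbb{E}[Z_t(\theta_i)^2]$ while Stein's lemma would still only produce half of it, leaving an uncancelled term $-\frac12\sum_i p_i(K_2-K_1)(\theta_i,\theta_i)\,\mathbb{E}[F'(M_t)A_t(\theta_i)]$ of uncontrolled sign. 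The accurate explanation is simply that the Fact as printed has a typo: the exponent should be $Y_i(\theta)-\frac12\mathbb{E}[Y_i(\theta)^2]$, consistently with \cite[Theorem~2.1]{Rhodes_2014} and with the GMC definition used elsewhere in the paper (apply the Fact with $Y=\gamma X$, so that $\frac{\gamma^2}{2}\mathbb{E}[X^2]=\frac12\mathbb{E}[Y^2]$). With that normalization your proof is correct.
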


\begin{proof}[Proofs of the above properties]
See respectively \cite[Theorem~2.5]{Rhodes_2014}, \cite[Theorem~2.11]{Rhodes_2014}, \cite[Theorem~2.16]{Rhodes_2014} and \cite[Theorem~2.1]{Rhodes_2014}. To apply the last convexity inequality to log-correlated Gaussian fields, we systematically use the regularization procedure in Proposition~\ref{prop:GMCdefinition} to meet the continuity assumption: this is standard and we will not write out the regularization everytime.
\end{proof}

\begin{rema}[Comparison of moments]\label{re:comp}
For people with background in analysis we now demonstrate a standard use of Kahane's convexity inequality mainly  for comparing moments of two different chaos measures, since we need to do this later on with careful book-keeping of the constants involved.  Consider thus $p<\frac{2}{\gamma^2}$ and the function $F(x)=x^{p}$ for $x>0$. Then either $F$ or $-F$ is convex. Consider a general log-correlated Gaussian field $Y$ as in Definition~\ref{prop:GMCdefinition} and an exact scaling log-correlated Gaussian field $Z$ as in Fact~\ref{fact:ScalingPurelog}. Since the correction term $g(\theta,\theta')$ in the covariance function of the field $Y$ is bounded in absolute value by a finite constant $C$, we can apply Kahane's inequality to the following Gaussian fields (with $N$ a normal Gaussian variable independent of $Y$ and $Z$):
\begin{equation*}
\left(Y(\theta)+\sqrt{C}N\right)_{\theta} \qquad\text{and}\qquad \left(Z(\theta)\right)_{\theta},
\end{equation*}
or 
\begin{equation*}
Y(\theta) \qquad\text{and}\qquad \left(Z(\theta) + \sqrt{C}N\right)_{\theta},
\end{equation*}
to conclude that there exists some positive constant $K$ such that, for all positive functions $P\left(e^{i\theta}\right)$,
\begin{equation*}
\frac{1}{K}\mathbb{E}\left[\left(\int_{\T}P(e^{i\theta})d\mu_{Y}(\theta)\right)^{p}\right]\leq \mathbb{E}\left[\left(\int_{\T}P(e^{i\theta})d\mu_{Z}(\theta)\right)^{p}\right]\leq K\mathbb{E}\left[\left(\int_{\T}P(e^{i\theta})d\mu_{Y}(\theta)\right)^{p}\right].
\end{equation*}
This relation tells us that modulo a multiplicative constant, the moments of the positively weighted mass of a general Gaussian multiplicative chaos measure associated to any log-correlated Gaussian field $Y$ scales similarly like in the exact scaling case with the field $Z$. We will write the above equation as
\begin{equation*}
\mathbb{E}\left[\left(\int_{\T}P(e^{i\theta})d\mu_{Z}(\theta)\right)^{p}\right]\asymp \mathbb{E}\left[\left(\int_{\T}P(e^{i\theta})d\mu_{Y}(\theta)\right)^{p}\right].
\end{equation*}
The constant in $\asymp$ only depends on the parameter $\gamma$, the correction term $g\left(e^{i\theta},e^{i\theta'}\right)$, and $p$.

To summarize, when estimating the moment of a positively weighted mass of a Gaussian multiplicative chaos measure $\mu_{Y}$, we can switch the underlying Gaussian field to an exact scaling log-correlated Gaussian field $Z$ of Fact~\ref{fact:ScalingPurelog}, up to a deterministic multiplicative constant.
\end{rema}

When $\mu=\mu_{X_c}$ is the Gaussian multiplicative chaos measure on $\T$ associated to $X$ with the canonical covariance kernel in \eqref{eq:canonicalcovariance}, the measure $d\mu(\theta)$ is then invariant under rotations of the unit circle. In fact, with this particular choice, the measure $\mu$ transforms nicely under any M\"obius transformation of the unit disc $\mathbb{D}$. Since we don't need this fact as we study general log-correlated Gaussian fields, we leave the interested reader to consult~\cite{vargas2017lecture} for the fascinating connections between Gaussian multiplicative chaos measures and Conformal Field Theory.

We record a simple auxiliary result.
\begin{lemm}[Negative moments]\label{lem:negativemoment}
Let $I_1,I_2$ be two  open intervals of the unit circle $\T$ or the interval $\left[-\frac{1}{2},\frac{1}{2}\right]$ and let $\mu$ be any Gaussian multiplicative chaos measure on $\T$ or $\left[-\frac{1}{2},\frac{1}{2}\right]$ with parameter $\gamma\in\left(0,\sqrt{2}\right)$. Then the almost surely positive random variable
\begin{equation*}
\inf\{\mu(I_1),\mu(I_2)\}
\end{equation*}
has negative moments of any order.
\end{lemm}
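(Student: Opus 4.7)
The approach is to reduce the statement to negative moments of a single GMC interval mass, which is already provided by Fact~\ref{fact:Positivemoment}. The plan relies on the elementary pointwise inequality
\begin{equation*}
\min(a,b)^{-p} \;=\; \max(a^{-p}, b^{-p}) \;\leq\; a^{-p} + b^{-p}, \qquad a,b>0,\ p>0.
\end{equation*}
Applying this with $a=\mu(I_1)$ and $b=\mu(I_2)$ and then taking expectations yields
\begin{equation*}
\mathbb{E}\!\left[\inf\{\mu(I_1),\mu(I_2)\}^{-p}\right] \;\leq\; \mathbb{E}\!\left[\mu(I_1)^{-p}\right] + \mathbb{E}\!\left[\mu(I_2)^{-p}\right].
\end{equation*}
Both terms on the right are finite for every $p>0$: the open intervals $I_1, I_2$ have non-empty interior, so Fact~\ref{fact:Positivemoment} (in its ``negative moments of all orders'' clause) directly gives $\mathbb{E}[\mu(I_j)^{-p}]<\infty$ for $j=1,2$.

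There is essentially no obstacle here: the single-interval negative moment bound supplied by Fact~\ref{fact:Positivemoment} is the whole content, and the $\min$-versus-sum inequality is the trivial reduction from two intervals to one. The qualifier ``almost surely positive'' in the statement is itself an easy consequence of the non-degeneracy of $\mu$ on any open set (for instance, by noting that $\mathbb{E}[\mu(I_j)]$ is a non-zero finite constant and $\mu(I_j)>0$ almost surely), and it is needed only to ensure that $\inf\{\mu(I_1),\mu(I_2)\}^{-p}$ is a well-defined nonnegative random variable before taking expectations.
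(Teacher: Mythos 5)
Your proof is correct and follows essentially the same route as the paper: reduce to a single interval via the pointwise inequality $\inf\{a,b\}^{-p}\le a^{-p}+b^{-p}$ and then invoke the negative-moments clause of Fact~\ref{fact:Positivemoment}. Nothing further to add.
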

\begin{proof}
For $\gamma\in(0,\sqrt{2}]$, any $1d$-Gaussian multiplicative chaos measure $\mu$ has all orders of negative moments on any open interval by Fact~\ref{fact:Positivemoment}. The lemma follows by using
\begin{equation*}
\left(\inf\{a,b\}\right)^{-p}\leq a^{-p}+b^{-p}
\end{equation*}
for all $a,b>0$ and taking expectations.
\end{proof}

\begin{lemm}[Structure of the carrier]\label{lem:atomlessness}
For $\gamma\in\left(0,\sqrt{2}\right)$, any Gaussian multiplicative chaos measure on $\T$ is almost surely singular with respect to the Lebesgue measure and does not possess atoms.
\end{lemm}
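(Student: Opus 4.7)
The plan is to extract the two conclusions from a single moment estimate that follows from the exact scaling (Fact~\ref{fact:ScalingPurelog}) together with Kahane's comparison as explained in Remark~\ref{re:comp}. Specifically, writing $\xi(p) := p + \tfrac{\gamma^2}{2}p(1-p)$, I claim that for any $p\in(0,2/\gamma^2)$ there is a constant $C_p$ such that
\begin{equation*}
\mathbb{E}\bigl[\mu([\theta_0-r,\theta_0+r])^{p}\bigr]\;\leq\; C_p\, r^{\xi(p)}, \qquad \theta_0\in\T,\ r\in(0,1/4).
\end{equation*}
Indeed, reducing to the exact-scaling field $Z$ by Remark~\ref{re:comp} (absorbing the translation invariance defect and the correction $g$ into a multiplicative constant), the exact scaling relation gives $\mathbb{E}[\mu_Z([x,x+r])^p]=r^{p(1+\gamma^2/2)}\mathbb{E}[e^{p\gamma\sqrt{-\log r}\,N}]\mathbb{E}[\mu_Z([-1/2,1/2])^p]$, and the Gaussian moment is $r^{-p^2\gamma^2/2}$; the positive moment on the right is finite by Fact~\ref{fact:Positivemoment}, yielding the exponent $\xi(p)$.

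For singularity, I would fix $p\in(0,1)$, so that $\xi(p)-p=\tfrac{\gamma^2}{2}p(1-p)>0$. Setting $D_r(\theta):=\mu([\theta-r,\theta+r])/(2r)$, the estimate gives $\mathbb{E}[D_r(\theta)^p]\to 0$ as $r\to0$. Fatou's lemma then yields $\liminf_{r\to 0}D_r(\theta)=0$ almost surely for each $\theta$; by Fubini (in $d\theta\otimes d\mathbb{P}$), almost surely this $\liminf$ vanishes for Lebesgue-almost every $\theta$. Lebesgue's differentiation theorem identifies $\lim_{r\to 0}D_r(\theta)$ with the Radon--Nikodym derivative of the absolutely continuous part of $\mu$ at Lebesgue-a.e.\ $\theta$; vanishing of the liminf forces the a.c.\ part to be zero, so $\mu\perp d\theta$ a.s.

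For atomlessness, the key observation is that $\xi'(1)=1-\gamma^2/2>0$ precisely because $\gamma^2<2$, so one can choose $p>1$ with $p<2/\gamma^2$ and $\xi(p)>1$. For such $p$ and any $\epsilon>0$, a union bound over the $\lceil 2\pi/r_n\rceil$ dyadic arcs $I_j^{(n)}$ of length $r_n=2^{-n}$ gives
\begin{equation*}
\mathbb{P}\Bigl(\max_{j}\mu(I_j^{(n)})\geq\epsilon\Bigr)\;\leq\; C\,\epsilon^{-p}\,2^{n}\,2^{-n\xi(p)}\;=\;C\,\epsilon^{-p}\,2^{-n(\xi(p)-1)},
\end{equation*}
which is summable in $n$. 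Borel--Cantelli implies that almost surely, for all sufficiently large $n$, every dyadic arc of length $2^{-n}$ has mass less than $\epsilon$. Since every point lies in some such arc, no atom has mass $\geq\epsilon$ almost surely; intersecting over $\epsilon=1/k$ removes atoms altogether.

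The main obstacle is not technical but conceptual: one must spot that the two properties live on opposite sides of $p=1$ in the multifractal spectrum $\xi$. The subcritical condition $\gamma^2<2$ enters twice, once to guarantee $\xi(p)>p$ for small $p$ (singularity) and once via $\xi'(1)>0$ to guarantee existence of $p>1$ with $\xi(p)>1$ (atomlessness). Translation from the exact-scaling field to a general $Y$ is handled entirely by Remark~\ref{re:comp}, so no further bookkeeping of the correction $g$ is required.
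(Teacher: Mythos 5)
The paper does not actually give a proof of this lemma: it cites \cite[Theorem~2.6]{Rhodes_2014}, which is the statement that the carrier of $\mu_\gamma$ has Hausdorff dimension $1-\gamma^2/2\in(0,1)$ (from which both singularity and atomlessness are immediate). So any self-contained argument counts as a genuinely different route, and yours is correct.

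Your approach extracts both conclusions from the single two-sided scaling exponent $\xi(p)=p+\tfrac{\gamma^2}{2}p(1-p)$ obtained from Fact~\ref{fact:ScalingPurelog} together with Kahane's comparison as in Remark~\ref{re:comp}. The computation of $\xi(p)$ is exactly right (the Gaussian factor contributes $r^{-p^2\gamma^2/2}$), and the two halves of the argument sit cleanly on opposite sides of $p=1$: for $p\in(0,1)$, $\xi(p)>p$ gives $\mathbb{E}[D_r(\theta)^p]\to0$, and Fatou plus Fubini plus the Lebesgue differentiation theorem (which guarantees that the full limit $\lim_{r\to 0}D_r(\theta)$ exists for Lebesgue-a.e.\ $\theta$, so the vanishing $\liminf$ identifies it as $0$) kills the absolutely continuous part; for $p$ slightly above $1$, $\xi(1)=1$ and $\xi'(1)=1-\gamma^2/2>0$ (this is the one place the subcriticality $\gamma^2<2$ is essential) yield $\xi(p)>1$ with $p<2/\gamma^2$ still available, and Markov plus a union bound over dyadic arcs plus Borel--Cantelli eliminate atoms. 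This is more elementary than invoking the full carrier-dimension theorem (it needs only the moment estimate, not any covering or dimension argument), and has the pedagogical merit of isolating where $\gamma^2<2$ is used. It is also the same moment estimate that the paper itself establishes and uses later (Lemma~\ref{lem:MomentBound2}), so your proof fits the paper's toolkit naturally. Two small points worth flagging: the exact-scaling field lives on the interval $[-1/2,1/2]$, so the comparison to an arc of $\T$ implicitly involves restricting to a sub-arc and identifying it with an interval, exactly as the paper does in the proof of Lemma~\ref{lem:MomentBound2}; and Fatou should formally be applied along a sequence $r_n\downarrow 0$, which suffices because the full limit already exists a.e.\ by Lebesgue differentiation. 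Neither affects the correctness.
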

\begin{proof}
See \cite[Theorem~2.6]{Rhodes_2014} for a proof and furthur discussions.
\end{proof}

It follows immediately that the analytic self-map $\varphi:\D\to\D$ (defined via \eqref{eq:choice} and \eqref{eq:clark})  is a.s. an inner function, as alluded in the introduction.
\begin{prop}[Inner function]\label{Prop:Inner}
The random analytic function $\varphi$ is almost surely an inner function, i.e. for all $z\in\T$, we have $|\varphi(z)|=1$.
\end{prop}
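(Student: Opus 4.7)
The plan is to reduce the claim directly to the almost sure singularity of $\mu_\gamma$ with respect to Lebesgue measure on $\T$ (Lemma~\ref{lem:atomlessness} in the subcritical range, together with the analogous well-known singularity of the critical chaos at $\gamma=\sqrt{2}$). The essential input from Clark measure theory is the explicit formula for the absolutely continuous part of $\nu_\alpha$ in its Lebesgue decomposition $\nu_\alpha = \nu_\alpha^{ac}+\nu_\alpha^{s}$, namely
\begin{equation*}
\frac{d\nu_\alpha^{ac}}{d\theta/(2\pi)}(\theta) \;=\; \frac{1-|\varphi^*(e^{i\theta})|^2}{|\alpha-\varphi^*(e^{i\theta})|^2},
\end{equation*}
where $\varphi^*(e^{i\theta}) := \lim_{r\uparrow 1}\varphi(re^{i\theta})$ denotes the (a.e.\ defined) nontangential boundary values of $\varphi$.

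I would derive this identity by letting $z = re^{i\theta}$ with $r\uparrow 1$ in the defining relation \eqref{eq:clark}. The right-hand side is $2\pi$ times the Poisson integral of the positive measure $\nu_\alpha$; by the classical Fatou theorem for Poisson integrals of finite positive measures, it converges almost everywhere to $2\pi$ times the Radon--Nikodym derivative of $\nu_\alpha^{ac}$. On the left-hand side, the function $(\alpha+\varphi(z))/(\alpha-\varphi(z))$ is analytic on $\D$ with positive real part, and Fatou's theorem applied to the bounded analytic function $\varphi$ yields the nontangential boundary value $(\alpha+\varphi^*)/(\alpha-\varphi^*)$ at almost every $\theta$ with $\varphi^*(e^{i\theta}) \neq \alpha$; its real part equals precisely $(1-|\varphi^*|^2)/|\alpha-\varphi^*|^2$.

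Specialising to $\alpha = 1$ and using that $\nu_1 = \mu_\gamma$ is almost surely purely singular, the identity forces
\begin{equation*}
\frac{1-|\varphi^*(e^{i\theta})|^2}{|1-\varphi^*(e^{i\theta})|^2} \;=\; 0 \qquad \text{for a.e.\ } \theta \text{ with } \varphi^*(e^{i\theta})\neq 1,
\end{equation*}
while on the exceptional set $\{\varphi^* = 1\}$ one trivially has $|\varphi^*|=1$ already. Consequently $|\varphi^*(e^{i\theta})|=1$ for almost every $\theta\in\T$, which is exactly the assertion that $\varphi$ is an inner function. There is no substantial obstacle here: the argument is a clean concatenation of two classical inputs (the boundary behaviour of Poisson integrals of positive measures and the singularity result for the chaos measure). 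The only minor point requiring care is to invoke the corresponding singularity statement at criticality when $\gamma=\sqrt{2}$, for which one relies on the Seneta--Heyde renormalisation theory of critical chaos.
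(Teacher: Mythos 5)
Your proof is correct and follows essentially the same route as the paper: both reduce the claim to the almost sure singularity of $\mu_\gamma$ via the standard Clark-measure fact that $\varphi$ is inner precisely when $\nu_{\varphi,1}$ is singular, the only difference being that the paper cites this equivalence (from \cite{Saksman2007AnEI}) as a black box while you re-derive the needed implication via Fatou's theorem for Poisson integrals. Your explicit remark about invoking the singularity of the critical chaos at $\gamma=\sqrt{2}$ is a fair point of care that the paper's one-line proof glosses over.
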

\begin{proof}
It is a classical fact \cite[Theorem~2.2]{Saksman2007AnEI} on Clark measures that $\varphi$ is an inner function if and only if the Clark measure $\nu_{\varphi,1}$ is singular with respect to the Lebesgue measure. The claim follows from Lemma~\ref{lem:atomlessness}.
\end{proof}

\section{Blaschke product property for the canonical field}
We find it instrumental to first provide a short and simple proof of the Blaschke product property (see Theorem~\ref{thm1}) in the case of the canonical log-correlated Gaussian field $X_c$. In addition,  it prepares the reader gently for the perturbation philosophy applied also in  the more involved proofs in the sequel.

\subsection{A Frostman type lemma}
To prove the Blaschke product property, we first provide a variant of the classical Frostman lemma \cite[Theorem~II.6.4]{garnett2007bounded} adapted to our situation.
\begin{lemm}[A Frostman type lemma]\label{lem:Frostman}
Let  $\varphi$ be a random inner function\footnote{Naturally we assume that the map $(z,\omega)\mapsto \varphi (z)$ is jointly measurable, where $\omega$ stands for an element in the probability space.} on the unit disc. Then $\varphi$ is almost surely a Blaschke product if for some $\varepsilon>0$
\begin{equation}\label{eq:Frostman'}
\sup\limits_{z\in\mathbb{D}}\mathbb{E}\left(\log\frac{1}{|\varphi(z)|}\right)^{1+\varepsilon}<\infty.
\end{equation}
\end{lemm}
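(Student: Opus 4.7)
The plan is to reduce the problem to the classical Frostman characterization of Blaschke products among inner functions. Any inner function $\varphi$ factors uniquely as $\varphi=B\cdot S_\eta$, with $B$ a Blaschke product and $S_\eta$ a singular inner function associated to a positive singular measure $\eta$ on $\T$; it is a pure Blaschke product if and only if $\|\eta\|=0$. The total mass $\|\eta\|$ can be read off from the radial log-integrals
\begin{equation*}
M(r)\coloneqq -\int_\T \log|\varphi(re^{i\theta})|\,\frac{d\theta}{2\pi},\qquad 0<r<1.
\end{equation*}
Indeed, using Jensen's formula for $B$ and the Poisson representation of $-\log|S_\eta|$, one verifies that $M$ is non-negative, non-increasing in $r$, with $\lim_{r\to 1^-}M(r)=\|\eta\|$. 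Since $\varphi$ is almost surely an inner function by Proposition~\ref{Prop:Inner}, the lemma reduces to showing $\|\eta\|=0$ almost surely.

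As $\|\eta\|\geq 0$, it suffices to show $\mathbb{E}\|\eta\|=0$. Hypothesis \eqref{eq:Frostman'} and Fubini give
\begin{equation*}
\mathbb{E}\,M(r)\,=\,\int_\T \mathbb{E}\bigl(-\log|\varphi(re^{i\theta})|\bigr)\,\frac{d\theta}{2\pi}\,\leq\,\sup_{z\in\D}\mathbb{E}\bigl(-\log|\varphi(z)|\bigr)\,=:\,C\,<\,\infty,
\end{equation*}
so in particular $\mathbb{E}\,M(0)<\infty$. Since $M(r)\searrow \|\eta\|$ almost surely and $M(\cdot)\leq M(0)$, dominated convergence gives $\mathbb{E}\|\eta\|=\lim_{r\to 1^-}\mathbb{E}\,M(r)$, and the task becomes showing this limit vanishes.

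To this end, fix $\theta\in\T$ and consider $\{-\log|\varphi(re^{i\theta})|\}_{r<1}$. Since $\varphi$ is a.s.\ inner, $|\varphi(e^{i\theta})|=1$ for Lebesgue-a.e.\ $\theta\in\T$, and a Fubini argument then gives, for Lebesgue-a.e.\ $\theta$, that $-\log|\varphi(re^{i\theta})|\to 0$ almost surely as $r\to 1^-$ (by the classical Fatou theorem on radial limits of bounded analytic functions). On the other hand, for each such $\theta$ the hypothesis \eqref{eq:Frostman'} bounds this family uniformly in $L^{1+\varepsilon}(\mathbb{P})$, rendering it uniformly integrable, so $\mathbb{E}(-\log|\varphi(re^{i\theta})|)\to 0$ for Lebesgue-a.e.\ $\theta$. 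Combined with the uniform domination by the constant $C$ above, a further dominated convergence in $\theta$ gives $\mathbb{E}\,M(r)\to 0$, concluding the argument. The only mildly delicate point is the double ``almost everywhere'' bookkeeping (Lebesgue in $\theta$ versus $\mathbb{P}$ in $\omega$); the extra $\varepsilon$ in \eqref{eq:Frostman'} is used in exactly one place, namely to promote an $L^1$-bound (which does not suffice for uniform integrability) to an $L^{1+\varepsilon}$-bound that does.
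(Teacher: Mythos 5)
Your argument is correct and is essentially the paper's own proof, resting on the same three pillars: Fubini to pass from the a.e.\ unimodularity of boundary values to a.s.\ radial convergence of $-\log|\varphi(re^{i\theta})|$ for a.e.\ $\theta$, uniform integrability supplied by the $L^{1+\varepsilon}$ bound to conclude $\mathbb{E}(-\log|\varphi(re^{i\theta})|)\to 0$, and dominated convergence in $\theta$ to get $\mathbb{E}\,M(r)\to 0$. The only cosmetic difference is in the final interchange: the paper invokes the characterization ``$\varphi$ is Blaschke iff $\liminf_{r\to 1^-}\int_0^{2\pi}\log(1/|\varphi(re^{i\theta})|)\,d\theta=0$'' and applies Fatou's lemma, whereas you track the singular mass $\|\eta\|$ explicitly via $M(r)\searrow\|\eta\|$ and close with dominated convergence in $\omega$; both are standard and equivalent here.
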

\begin{proof}
The inner function $\varphi$ is a Blaschke product if and only if (see \cite[Theorem~II.2.4, Corollary~I.6.6]{garnett2007bounded})
\begin{equation*}
\liminf\limits_{r\to 1^{-}}\int_{0}^{2\pi}\log\left(\frac{1}{|\varphi(re^{i\theta})|}\right)d\theta=0.
\end{equation*}
Define $F:\D\to [0,\infty]$ by setting $F(z)\coloneqq\mathbb{E} \log \left(\frac{1}{|\varphi(z)|} \right)$. Since almost surely $\varphi$ is an inner function, and hence a.s. its boundary values are unimodular a.e. at the boundary $\partial\D$, we deduce by Fubini that for a.e. $\theta\in [0,2\pi]$ it holds that a.s. $\lim_{r\to 1} \log(1/|\varphi(re^{i\theta})|)=0$. By our assumption, we may apply uniform integrability for the expectation in the definition  of $F(re^{i\theta})$ and deduce that
\begin{equation*}
\lim_{r\to 1^-} F(re^{i\theta})=0 \qquad \textrm{for almost every} \qquad \theta\in [0,2\pi).
\end{equation*}

Thereafter (note that $F$ is bounded under our assumption) we may apply dominated convergence theorem to deduce that  $\lim_{r\to 1^-} \int_0^{2\pi} F(re^{i\theta})d\theta =0$, whereafter Fubini's theorem  and Fatou's lemma yield that
\begin{equation*}
\mathbb{E}\left[\liminf\limits_{r\to 1^{-}} \int_{0}^{2\pi}\log\left(\frac{1}{|\varphi(re^{i\theta})|}\right)d\theta\right]
\leq \liminf\limits_{r\to 1^{-}}  \int_0^{2\pi} F(re^{i\theta})d\theta =0.
\end{equation*}
This finishes the proof.
\end{proof}

Since $\RRe(h(z))\geq 0$, the quantity inside the expectation only blows up when $h(z)$ is close to $1$. It is then an elementary exercise to verify that the following condition suffices:
\begin{coro}\label{cor:Frostman}
Let $\varphi$ be defined via  Equation~\eqref{eq:Clark} with a random singular measure $\mu$. Then $\varphi$ is almost surely a Blaschke product if for some $\varepsilon>0$
\begin{equation*}
\sup\limits_{z\in\mathbb{D}}\mathbb{E}\left(\log^+\frac{1}{|h(z)-1|}\right)^{1+\varepsilon}<\infty.
\end{equation*}
\end{coro}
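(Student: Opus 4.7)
The plan is to reduce this corollary to Lemma~\ref{lem:Frostman} by establishing, pointwise in $z\in\D$ and in the realization, a deterministic inequality of the form
\[
\log\frac{1}{|\varphi(z)|} \;\le\; C + \log^+\frac{1}{|h(z)-1|}
\]
with $C$ an absolute constant. Once this is in hand, raising to the $(1+\varepsilon)$-th power and applying the elementary estimate $(a+b)^{1+\varepsilon}\le 2^\varepsilon(a^{1+\varepsilon}+b^{1+\varepsilon})$ transforms the assumed uniform moment bound on $\log^+\frac{1}{|h(z)-1|}$ directly into the hypothesis \eqref{eq:Frostman'} required by Lemma~\ref{lem:Frostman}, and the conclusion follows.

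To produce the pointwise bound, I would start from the formula $\varphi = (h-1)/(h+1)$ read off \eqref{eq:Clark}, which gives
\[
\log\frac{1}{|\varphi(z)|} \;=\; \log|h(z)+1| \,-\, \log|h(z)-1|.
\]
The positivity of the random measure $\mu$ enters decisively at this point: $h$ is a Herglotz integral of a positive measure, so $\RRe h(z)\ge 0$, and consequently $|h(z)+1|\ge 1$. In particular the right-hand side above is automatically nonnegative, consistent with $|\varphi|\le 1$.

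All that remains is to dominate $\log|h(z)+1|$, for which a short case split suffices. If $|h(z)-1|\ge 1$, the triangle inequality gives $|h(z)+1|\le|h(z)-1|+2\le 3|h(z)-1|$, so $\log\frac{1}{|\varphi(z)|}\le\log 3$. If instead $|h(z)-1|<1$, then $|h(z)+1|\le|h(z)-1|+2<3$, and hence $\log\frac{1}{|\varphi(z)|}\le \log 3+\log\frac{1}{|h(z)-1|}=\log 3+\log^+\frac{1}{|h(z)-1|}$. In both regimes the bound holds with $C=\log 3$, and combining with the first paragraph closes the argument.

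I do not anticipate any genuine obstacle here: the corollary is essentially a cosmetic repackaging of Lemma~\ref{lem:Frostman} phrased in terms of the more tractable Herglotz transform $h$, the only substantive input being the positivity of $\mu$, which automatically enforces $\RRe h\ge 0$ and therefore $|h+1|\ge 1$.
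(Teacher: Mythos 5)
Your proof is correct and is precisely the argument the paper has in mind: it explicitly notes that $\RRe h(z)\ge 0$ and then calls the reduction ``an elementary exercise,'' which your triangle-inequality case split (together with the convexity bound $(a+b)^{1+\varepsilon}\le 2^\varepsilon(a^{1+\varepsilon}+b^{1+\varepsilon})$) carries out cleanly.
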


\begin{rema} Since in the proof of Lemma \ref{lem:Frostman} we used the assumption only to guarantee uniform integrability under the expectation,  and at the same time boundedness of $F$, we could equally well replace the condition
\eqref{eq:Frostman'} by 
\begin{equation*}
\sup\limits_{z\in\mathbb{D}}\mathbb{E}\; \psi\left(\log\frac{1}{|\varphi(z)|}\right)<\infty,
\end{equation*}
where $\psi:[0,\infty)\to[0,\infty)$ is any increasing function with $\lim_{x\to\infty} \psi(x)/x=\infty.$
\end{rema}

\subsection{Proof of the Blaschke product property}
We now prove Theorem~\ref{thm1} for the canonical log-correlated Gaussian field $X_c$. Since the measure $\mu$ is invariant under rotations of $\T$, it is enough to show that
\begin{equation*}
\sup\limits_{r\in[0,1)}\mathbb{E}\left(\log^{+}\frac{1}{|h(r)-1|}\right)^{1+\varepsilon}<\infty.
\end{equation*}

Since for any $p\in(0,1)$ and  $\varepsilon >0$ we have
\begin{equation*}
\left(\log^{+}\frac{1}{x}\right)^{1+\varepsilon}\leq \frac{C(p,\varepsilon)}{x^p}\qquad\text{and}\qquad |h(r)-1|\geq|\IIm(h(r))|,
\end{equation*}
it is sufficient to consider the imaginary part $\IIm(h(r))$ and show that for some $p\in(0,1)$,
\begin{equation*}
\sup\limits_{r\in[0,1)}\mathbb{E}\left[\frac{1}{|\IIm(h(r))|^p}\right]<\infty.
\end{equation*}
Notice that the imaginary part $\IIm(h(r))$ writes as a signed Gaussian multiplicative chaos measure:
\begin{equation*}
\IIm(h(r))=\int_{\T}\frac{-2r\sin(\theta)}{|r-e^{i\theta}|^2}d\mu(\theta).
\end{equation*}

The key idea is to take out a Fourier mode in the canonical field $X_c$ and write the following independent decomposition of $X_c$:
\begin{equation*}
X_c(\theta)=B_1\sin(\theta)+\widetilde{X}(\theta).
\end{equation*}
It follows that (where $\widetilde{\mu}$ is the Gaussian multiplicative measure associated with the log-correlated Gaussian field $\widetilde{X}$):
\begin{equation*}
d\mu(\theta)=\exp\left(\gamma B_1\sin(\theta)-\frac{\gamma^2\sin^2(\theta)}{2}\right)d\widetilde{\mu}(\theta),
\end{equation*}
and the following derivative has constant sign:
\begin{equation*}
-\frac{\partial \IIm(h(r))}{\partial B_1}=\int_{\T}\frac{2\gamma r\cdot \sin^2(\theta)}{|r-e^{i\theta}|^2}\cdot\exp\left(\gamma B_1\sin(\theta)-\frac{\gamma^2\sin^2(\theta)}{2}\right)d\widetilde{\mu}(\theta)\geq 0.
\end{equation*}

Now if $B_1\geq 0$, by looking at the interval where $\sin(\theta)\geq 0$, the integral is bigger than
\begin{equation*}
e^{-\gamma^2/2}\int_{\theta\in[0,\pi]}\frac{2\gamma r\cdot\sin^2(\theta)}{|r-e^{i\theta}|^2}d\widetilde{\mu}(\theta).
\end{equation*}
Similarly if $B_1\leq 0$, by looking at the interval where $\sin(\theta)\leq 0$, the integral is bigger than
\begin{equation*}
e^{-\gamma^2/2}\int_{\theta\in[\pi,2\pi]}\frac{2\gamma r\cdot\sin^2(\theta)}{|r-e^{i\theta}|^2}d\widetilde{\mu}(\theta).
\end{equation*}
In any case, for all $B_1\in\mathbb{R}$ and $r\in [1/2,1)$,
\begin{equation*}
-\frac{\partial \IIm(h(r))}{\partial B_1}\geq C\inf\left\{\int_{\theta\in[\frac{\pi}{4},\frac{3\pi}{4}]}d\widetilde{\mu}(\theta),\int_{\theta\in[\frac{5\pi}{4},\frac{7\pi}{4}]}d\widetilde{\mu}(\theta)\right\}.
\end{equation*}

The above is a lower bound on the derivative of $\IIm(h(r))$ with respect to the Gaussian variable $B_1$ using the auxiliary field $\widetilde{X}$. We now record an elementary observation.
\begin{lemm}\label{lem:integrallemma}
Let $\frac{1}{\sqrt{2\pi}}e^{-\frac{\rho^2}{2}}d\rho$ denote the distribution density of a standard Gaussian random variable. Let $v:\mathbb{R}\to\mathbb{R}$ be a differentiable function such that $v'(\rho)\geq b>0$ for all $\rho\in\mathbb{R}$. Then
\begin{equation*}
\frac{1}{\sqrt{2\pi}}\int_{\mathbb{R}}e^{-\frac{\rho^2}{2}}\frac{1}{|v(\rho)|^p}d\rho\leq Cb^{-p},
\end{equation*}
with $C=C(p)$, for any $p\in (0,1)$.
\end{lemm}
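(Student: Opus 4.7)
The plan is to first reduce to the normalized case $b=1$ by rescaling, and then use the fact that a function with derivative bounded below by $1$ has at most one zero, near which it grows at least linearly.

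First I would set $w(\rho)\coloneqq v(\rho)/b$, so that $w'(\rho)\geq 1$ everywhere and
\begin{equation*}
\frac{1}{\sqrt{2\pi}}\int_{\R}e^{-\rho^2/2}\frac{1}{|v(\rho)|^p}d\rho \;=\; b^{-p}\cdot\frac{1}{\sqrt{2\pi}}\int_{\R}e^{-\rho^2/2}\frac{1}{|w(\rho)|^p}d\rho.
\end{equation*}
It is therefore enough to bound the last integral by a constant depending only on $p$. This reduction is what produces the correct exponent $b^{-p}$ rather than the naive $b^{-1}$ one would obtain from a direct change of variables $u=v(\rho)$.

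Next, since $w'\geq 1$, the function $w$ is strictly increasing and $w(\rho)\to\pm\infty$ as $\rho\to\pm\infty$, hence it has a unique zero $\rho_0\in\R$. The mean value theorem gives
\begin{equation*}
|w(\rho)| \;=\; |w(\rho)-w(\rho_0)| \;\geq\; |\rho-\rho_0|
\end{equation*}
for every $\rho\in\R$. Consequently
\begin{equation*}
\frac{1}{\sqrt{2\pi}}\int_{\R}e^{-\rho^2/2}\frac{d\rho}{|w(\rho)|^p} \;\leq\; \frac{1}{\sqrt{2\pi}}\int_{\R}e^{-\rho^2/2}\frac{d\rho}{|\rho-\rho_0|^p}.
\end{equation*}

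Finally, I would split the right-hand side according to whether $|\rho-\rho_0|\leq 1$ or $|\rho-\rho_0|>1$. On the small region, dropping the Gaussian factor and using $p<1$,
\begin{equation*}
\int_{|\rho-\rho_0|\leq 1}\frac{d\rho}{|\rho-\rho_0|^p}\;=\;\frac{2}{1-p},
\end{equation*}
while on the complement we have $|\rho-\rho_0|^{-p}\leq 1$ and the Gaussian weight integrates to $\sqrt{2\pi}$. Combining these bounds yields a constant $C=C(p)$ independent of $\rho_0$, and reinserting the factor $b^{-p}$ from the rescaling step completes the proof.

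The only mildly subtle point is the initial rescaling: without it, a direct substitution $u=v(\rho)$ would give a bound of order $b^{-1}$, which is both weaker and insufficient for the downstream argument. Everything else is a routine split-integral estimate.
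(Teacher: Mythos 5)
Your proof is correct and is essentially the same as the paper's: both identify the unique zero of $v$ (which exists since $v'\geq b>0$), use the mean value theorem to bound $|v(\rho)|\geq b|\rho-\rho_0|$, and then split the integral near and away from $\rho_0$. The preliminary rescaling $w=v/b$ is a cosmetic reorganization of the same algebra; the paper simply pulls the factor $b^{-p}$ out after applying the MVT directly to $v$.
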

\begin{proof}
Let $x_0\in\mathbb{R}$ be the only point where $v(x_0)=0$. Then
\begin{equation*}
|v(\rho)-v(x_0)|\geq b|\rho-x_0|
\end{equation*}
and
\begin{equation*}
\int_{\mathbb{R}}e^{-\frac{\rho^2}{2}}\frac{1}{|v(\rho)-v(x_0)|^p}d\rho\leq b^{-p}\left(\int_{\mathbb{R}}e^{-\frac{\rho^2}{2}}\frac{1}{|\rho-x_0|^{p}}d\rho\right).
\end{equation*}
It remains to see that
\begin{equation*}
\int_\R e^{-\frac{\rho^2}{2}}\frac{1}{|\rho-x_0|^{p}}d\rho\leq\int_{x_0-1}^{x_0+1}\frac{1}{|\rho-x_0|^{p}}d\rho+\int_{-\infty}^{x_0-1}e^{-\frac{\rho^2}{2}}d\rho+\int_{x_0+1}^{\infty}e^{-\frac{\rho^2}{2}}d\rho\eqqcolon C(p)
\end{equation*}
which is finite as long as $p\in(0,1)$.
\end{proof}

To conclude, let $\rho$ parametrize the Gaussian variable $B_1$ and let $b$ be the above lower bound
\begin{equation*}
    b=C\inf\left\{\int_{\theta\in[\frac{\pi}{4},\frac{3\pi}{4}]}d\widetilde{\mu}(\theta),\int_{\theta\in[\frac{5\pi}{4},\frac{7\pi}{4}]}d\widetilde{\mu}(\theta)\right\}
\end{equation*}
independent of $B_1$, and apply Lemma~\ref{lem:integrallemma}. Taking the expectation with respect to $B_1$ is equivalent to integrating over $\rho$, and we get the following for the conditional expectation on $B_1$ with $p\in(0,1)$:
\begin{equation*}
\mathbb{E}\left[|\IIm(h(r))|^{-p}\,\big|\, \widetilde{X}\right]\leq C\left(\inf\left\{\int_{\theta\in[\frac{\pi}{4},\frac{3\pi}{4}]}d\widetilde{\mu}(\theta),\int_{\theta\in[\frac{5\pi}{4},\frac{7\pi}{4}]}d\widetilde{\mu}(\theta)\right\}\right)^{-p}.
\end{equation*}
Now taking the expectation with respect to all other Gaussian variables,
\begin{equation*}
\mathbb{E}\left[\frac{1}{|\IIm(h(r))|^p}\right]\leq C\mathbb{E}\left[\left(\inf\left\{\int_{\theta\in[\frac{\pi}{4},\frac{3\pi}{4}]}d\widetilde{\mu}(\theta),\int_{\theta\in[\frac{5\pi}{4},\frac{7\pi}{4}]}d\widetilde{\mu}(\theta)\right\}\right)^{-p}\right].
\end{equation*}
The last quantity, independent of $r$, is finite for all $p\in(0,1)$ by Lemma~\ref{lem:negativemoment}: this finishes the proof.

\section{On the density of zeroes: preparations}\label{sec:preparations}
In this section, we prepare our study for the density problem, Theorem~\ref{thm2}. Section \ref{subse:Frostman}  first develops a quantitative version of the Frostman-type Lemma~\ref{lem:Frostman}, and provides a sufficient condition to Theorem~\ref{thm2}. It  turns out that we again need  uniform estimate for the expectation of a suitable functional with a log-singularity, acting on the real part of the Poisson extension of the measure. 

In Section \ref{subse:control}  we then get rid of the log-singularity using a perturbative method, whose idea is similar to the one illustrated in the previous section, although the situation here is much more complicated since we need to extract two independent perturbations with suitable properties. Their existence is nontrivial since we are dealing with a general log-correlated field, and the technical details are postponed to Section \ref{subse:existence}. The upshot of Section \ref{sec:preparations} is Corollary \ref{coro:FinalSufficient} below,  which yields  a sufficient condition to the first part of Theorem~\ref{thm2} in the form of a moment estimate of  a weighted auxiliary positive Gaussian multiplicative chaos measure.

\subsection{Lemma on the density of the zeroes}\label{subse:Frostman}
The goal here is to provide a simple quantitative version of Lemma~\ref{lem:Frostman}. In the sequel, the symbol $\lesssim$ means smaller or equal to within a multiplicative constant independent of $r=|z|$.

\begin{lemm}\label{lemm:densityupperbound}
Let $z_1,z_2,\dots$ stand for the zeroes of a Blaschke product $f:\D\to\D$ and let $0<\beta<1$. We have 
\begin{equation*}
    \sum_{k=1}^{\infty} (1-|z_k|)^\beta<\infty
\end{equation*}
if and only if $\displaystyle \int_{\mathbb{D}}(1-|z|^2)^{\beta-2}\log(1/|\varphi(z)|)dA(z)<\infty$.
\end{lemm}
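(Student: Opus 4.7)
The plan is to exploit the product structure of a Blaschke product together with a Möbius change of variable in the integral, reducing everything to an elementary uniform estimate for the zeroes.

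\medskip

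\textbf{Step 1: Decomposition.} Writing $f(z)=e^{ic}z^m\prod_k b_k(z)$ with $b_k(z)=\frac{\overline{z}_k}{|z_k|}\frac{z_k-z}{1-\overline{z}_k z}$, we have $\log(1/|f(z)|)=\sum_k\log(1/|b_k(z)|)$ (absorbing the factor from $z^m$ into the sum by allowing a zero at the origin). By Tonelli's theorem the left-hand integral equals
\begin{equation*}
\sum_k I_k,\qquad I_k\coloneqq\int_{\D}(1-|z|^2)^{\beta-2}\log\frac{1}{|b_k(z)|}\,dA(z).
\end{equation*}
So the claim reduces to showing $I_k\asymp (1-|z_k|)^\beta$ with constants independent of $z_k\in\D$.

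\medskip

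\textbf{Step 2: Möbius change of variables.} Since $b_k$ is an involution of $\D$, we substitute $w=b_k(z)$, i.e. $z=\frac{z_k-w}{1-\overline{z}_k w}$. Using the standard identities
\begin{equation*}
1-|z|^2=\frac{(1-|z_k|^2)(1-|w|^2)}{|1-\overline{z}_k w|^2},\qquad dA(z)=\frac{(1-|z_k|^2)^2}{|1-\overline{z}_k w|^4}\,dA(w),
\end{equation*}
the integral transforms into
\begin{equation*}
I_k=(1-|z_k|^2)^\beta\int_{\D}\frac{(1-|w|^2)^{\beta-2}}{|1-\overline{z}_k w|^{2\beta}}\log\frac{1}{|w|}\,dA(w)=:(1-|z_k|^2)^\beta\cdot J_k.
\end{equation*}

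\medskip

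\textbf{Step 3: Uniform two-sided bound on $J_k$.} For the lower bound, I use $|1-\overline{z}_k w|\leq 2$ to bound the denominator from above by $4^\beta$, so $J_k\geq c\int_{\D}(1-|w|^2)^{\beta-2}\log(1/|w|)\,dA(w)$, which is a positive finite constant since $\beta\in(0,1)$ (integrable both at $0$ and at the boundary). For the upper bound, split $\D$ into $\{|w|\leq 1/2\}$, where the integrand is bounded by a constant, and $\{|w|\geq 1/2\}$, where $\log(1/|w|)\lesssim (1-|w|)$; the remaining contribution is controlled by the standard disc integral
\begin{equation*}
\int_{\D}\frac{(1-|w|^2)^{s}}{|1-\overline{a}w|^{c}}\,dA(w)\lesssim 1,\qquad\text{valid whenever }c<s+2,
\end{equation*}
with $s=\beta-1$, $c=2\beta$ and the hypothesis $\beta<1$ giving precisely $2\beta<\beta+1=s+2$. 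Thus $J_k$ is bounded above and below by positive constants depending only on $\beta$.

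\medskip

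\textbf{Step 4: Conclusion.} We obtain $I_k\asymp (1-|z_k|^2)^\beta\asymp (1-|z_k|)^\beta$ uniformly in $k$, and summing yields
\begin{equation*}
\int_{\D}(1-|z|^2)^{\beta-2}\log\frac{1}{|f(z)|}\,dA(z)\asymp\sum_{k}(1-|z_k|)^\beta,
\end{equation*}
which proves the equivalence. The only non-routine step is Step 3; there the main point is simply to observe that the exponent condition $\beta<1$ is exactly what keeps $2\beta$ below the critical threshold $\beta+1$ in the standard disc estimate, so that the Möbius-invariant integral remains uniformly bounded as $z_k$ approaches the boundary.
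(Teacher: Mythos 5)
Your proof is correct, and it takes a genuinely different route from the paper. The paper's argument is potential-theoretic: it applies Green's formula to the pair of functions $v(z)=(a-|z|^2)^\beta$ and $\log(1/|\varphi(z)|)$, exploiting that $\Delta v\asymp (a-|z|^2)^{\beta-2}$ and that $\tfrac{1}{2\pi}\Delta\log|\varphi|=\sum_k\delta_{z_k}$ in the sense of distributions, then lets $a\downarrow 1$ and passes from finite to infinite Blaschke products by monotone convergence. You instead factor out the single Blaschke factors, use the M\"obius change of variables $w=b_k(z)$ to pull out $(1-|z_k|^2)^\beta$ exactly, and control the remaining integral $J_k$ uniformly in $z_k$ by the classical Forelli--Rudin estimate $\int_\D(1-|w|^2)^s|1-\bar aw|^{-c}dA\lesssim 1$ for $s>-1$, $c<s+2$. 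Both proofs hinge on the same two facts (Tonelli/monotone convergence over the factors, and the scaling with exponent $\beta$), but yours replaces the Green's identity step by an explicit conformal computation and a standard Bergman-space estimate. The paper's approach is more conceptual and generalizes readily to other weights via $\Delta v$; yours is more elementary, avoids distributional Laplacians and boundary terms in Green's formula, and makes the threshold $\beta<1$ appear transparently as the Forelli--Rudin condition $2\beta<\beta+1$. One minor cosmetic point: after the substitution the denominator is $|1-|z_k|w|^{2\beta}$ rather than $|1-\bar z_k w|^{2\beta}$, because of the unimodular prefactor $\bar z_k/|z_k|$ in $b_k$; this is harmless since the rest of the integrand is rotation-invariant in $w$, but it is worth stating explicitly.
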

\begin{proof}
Assume first that $\varphi$ is a finite Blaschke product. We set $v(z)=(a-|z|^2)^{\beta}$  with $0<\beta <1$ and first let the auxiliary parameter $a$ satisfy $a>1$. Compute
\begin{equation*}
\Delta v(z)= 4\beta\big(\beta|z|^2-a\big)(a-|z|^2)^{\beta-2}\asymp (a-|z|^2)^{\beta-2}\quad\textrm{for}\;\, z\in\D,
\end{equation*}
where the implicit constants in $\asymp$ are independent of $a>1$.  We note that $v$ is smooth on $\overline\D$ up to the boundary, and so is $z\mapsto \log(1/|\varphi(z)|)$ apart from its poles, and the latter function vanishes on the boundary. Hence we may apply the   Green's formula and the fact   $\frac{1}{2\pi}\Delta\log|\varphi(z)|=\sum\limits_{z\in\mathbb{C};\; \varphi(z)=0}\delta_z$ on the unit disc to obtain
\begin{eqnarray*}
   \sum\limits_{k\geq 1}(a-|z_k|^2)^\beta&\asymp&
  \frac{1}{2\pi}\int_{\mathbb{D}}(a-|z|^2)^{\beta-2}\log(1/|\varphi(z)|)dA(z)+\frac{1}{2\pi}\int_{\partial D}v(z)\nabla (\log(1/|\varphi(z)|))\cdot z, 
\end{eqnarray*}
where $\cdot$ stands for the $\R^2$ inner product. By letting $a\downarrow 1^+$ we obtain
\begin{equation*}
   \sum\limits_{k\geq 1}(1-|z_k|^2)^\beta\asymp  \int_{\mathbb{D}}(1-|z|^2)^{\beta-2}\log(1/|\varphi(z)|)dA(z).
\end{equation*}
This same relation is then obtained for a general Blaschke product  by applying it first to a partial Blaschke product and using monotone convergence. Finally, the lemma follows by noting that in our setup the series   $\sum\limits_{k\geq 1}(1-|z_k|^2)^\beta$ and  $ \sum\limits_{k\geq 1}(1-|z_k|)^\beta$ converge simultaneously.
\end{proof}

\begin{coro}\label{coro:CoroThm2}
The first part of Theorem~\ref{thm2} holds, if for all $\varepsilon>0$ and all $r$ close enough to $1$, we have for our random inner function $\varphi$
\begin{equation*}
\sup\limits_{|z|=r}\mathbb{E}\left[\log\left(\frac{1}{|\varphi(z)|}\right)\right]\lesssim (1-r)^{\frac{\gamma^2}{8}-\varepsilon}.
\end{equation*}
\end{coro}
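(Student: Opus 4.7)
The plan is to reduce the first part of Theorem~\ref{thm2} to an elementary calculation using Lemma~\ref{lemm:densityupperbound} together with Fubini-Tonelli. Since by Theorem~\ref{thm1} the random function $\varphi$ is almost surely a Blaschke product, Lemma~\ref{lemm:densityupperbound} applies realization-wise and says that for each $\beta\in(0,1)$, the summability $\sum_k (1-|z_k|)^\beta<\infty$ is equivalent to finiteness of
\[
I_\beta \;\coloneqq\; \int_{\D}(1-|z|^2)^{\beta-2}\log\bigl(1/|\varphi(z)|\bigr)\,dA(z).
\]
So the problem reduces to showing $I_\beta<\infty$ a.s. for every $\beta>1-\gamma^2/8$.

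Fix such a $\beta$ and choose $\varepsilon>0$ so small that $\beta>1-\gamma^2/8+\varepsilon$. Since the integrand is nonnegative, Tonelli and the hypothesis give
\[
\mathbb{E}\,I_\beta \;=\; \int_{\D}(1-|z|^2)^{\beta-2}\,\mathbb{E}\bigl[\log(1/|\varphi(z)|)\bigr]\,dA(z)
\;\lesssim\; \int_{\D}(1-|z|^2)^{\beta-2}(1-|z|)^{\gamma^2/8-\varepsilon}\,dA(z),
\]
where the displayed bound uses the hypothesis for $|z|$ close to $1$ and the fact that $\mathbb{E}\log(1/|\varphi(z)|)$ is continuous (hence bounded) on any compact subdisc. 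Passing to polar coordinates and using $(1-r^2)^{\beta-2}\asymp (1-r)^{\beta-2}$ near $r=1$, the right-hand side is comparable to $\int_0^1 (1-r)^{\beta-2+\gamma^2/8-\varepsilon}\,dr$, which is finite precisely because $\beta-2+\gamma^2/8-\varepsilon>-1$. Therefore $\mathbb{E}\,I_\beta<\infty$ and in particular $I_\beta<\infty$ almost surely.

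It remains to upgrade the ``for each $\beta$, a.s.'' statement to a single almost sure event valid for every $\beta>1-\gamma^2/8$. For this I would pick a countable sequence $\beta_n\downarrow 1-\gamma^2/8$, intersect the corresponding full-measure events, and use the elementary monotonicity $(1-|z_k|)^\beta\le (1-|z_k|)^{\beta_n}$ whenever $\beta\ge \beta_n$ (valid because $1-|z_k|\in(0,1)$) to conclude $\sum_k(1-|z_k|)^\beta<\infty$ simultaneously for all $\beta>1-\gamma^2/8$.

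There is essentially no hard step here: once the pointwise bound of the corollary is granted, the whole argument is a Fubini/Tonelli exchange followed by a standard radial integral in the disc. The substantive work of Section~\ref{sec:preparations} is not in this corollary itself but in actually establishing the hypothesized bound on $\mathbb{E}\log(1/|\varphi(z)|)$, which is the task addressed by the subsequent perturbative analysis.
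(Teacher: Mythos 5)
Your argument is correct and is essentially the (unwritten) proof the paper has in mind: Corollary~\ref{coro:CoroThm2} is treated as an immediate consequence of Lemma~\ref{lemm:densityupperbound} via Tonelli and a radial integral, and you have simply written this out, including the harmless $\varepsilon$-bookkeeping, the compact-subdisc remark, and the countable-$\beta_n$ upgrade (which, since $1-|z_k|<1$, also covers $\beta\geq 1$ without invoking the lemma there).

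One small wrinkle worth flagging: you invoke Theorem~\ref{thm1} to know that $\varphi$ is a.s.\ a Blaschke product so that Lemma~\ref{lemm:densityupperbound} applies. In the paper's narrative, the \emph{general} form of Theorem~\ref{thm1} is itself established only in Section~5, by verifying the chain of sufficient conditions of which Corollary~\ref{coro:CoroThm2} is one link; so a literal appeal to the general Theorem~\ref{thm1} at this stage is circular. The fix is easy and does not change the substance of your argument: the hypothesis of the corollary already forces $\varphi$ to be a Blaschke product. Indeed, Fubini gives $\mathbb{E}\int_0^{2\pi}\log(1/|\varphi(re^{i\theta})|)\,d\theta\lesssim (1-r)^{\gamma^2/8-\varepsilon}\to 0$ as $r\to 1^-$, so Fatou yields $\liminf_{r\to 1^-}\int_0^{2\pi}\log(1/|\varphi(re^{i\theta})|)\,d\theta=0$ a.s., which is precisely the Blaschke criterion used in the proof of Lemma~\ref{lem:Frostman}. (Equivalently: if $\varphi$ had a nontrivial singular factor $S$, then $\int_0^{2\pi}\log(1/|S(re^{i\theta})|)\,d\theta$ would be a positive constant, forcing $I_\beta=\infty$ for $\beta<1$, contradicting $\mathbb{E}I_\beta<\infty$.) With this replacement your proof is self-contained and non-circular.
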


To establish the first part of Theorem~\ref{thm2}, we use a slightly weaker form of Corollary~\ref{coro:CoroThm2}. In order to state it, let us denote by $x(z)$ and $y(z)$  respectively the real and imaginary parts of $h(z)$, i.e.
\begin{equation*}
h(z)=\int_{\T}\frac{e^{i\theta}+z}{e^{i\theta}-z}d\mu(\theta)=x(z)+iy(z).
\end{equation*}
We can write them explicitly in terms of the Poisson kernel and its conjugate:
\begin{equation*}\label{eq:DefinitionsXY}
\begin{split}
x(z)=\int_{\T}P_z(e^{i\theta})d\mu(\theta),\qquad &P_z(e^{i\theta})=\frac{1-|z|^2}{|z-e^{i\theta}|^2},\\
y(z)=\int_{\T}Q_z(e^{i\theta})d\mu(\theta),\qquad &Q_z(e^{i\theta})=\frac{-2|z|\sin(\theta-\arg (z))}{|z-e^{i\theta}|^2}.
\end{split}
\end{equation*}
\begin{coro}[Upper bound for the density problem]\label{lem:DensitySufficient}
The first part of Theorem~\ref{thm2} holds if, for all $\varepsilon>0$,
\begin{equation*}\label{eq:UpperBoundEstimate}
\mathbb{E}\left[\log\left(1+\frac{4x(z)}{(x(z)-1)^2}\right)\right]\lesssim (1-|z|)^{\frac{\gamma^2}{8}-\varepsilon}\qquad\textrm{\rm for} \qquad z\in\D.
\end{equation*}
\end{coro}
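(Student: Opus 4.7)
The plan is to reduce the statement to Corollary~\ref{coro:CoroThm2} by a direct algebraic manipulation, without any probability: I just need a pointwise (in $\omega$ and $z$) comparison between $\log(1/|\varphi(z)|)$ and the expression appearing inside the expectation of the hypothesis.

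First I would plug in $\varphi=(h-1)/(h+1)$ and write everything in terms of $x=\RRe h(z)$ and $y=\IIm h(z)$. A short calculation gives
\begin{equation*}
|\varphi(z)|^{2}\;=\;\frac{(x-1)^{2}+y^{2}}{(x+1)^{2}+y^{2}}\;=\;1-\frac{4x}{(x-1)^{2}+y^{2}+4x}.
\end{equation*}
Rearranging yields the cleaner identity
\begin{equation*}
\log\frac{1}{|\varphi(z)|^{2}}\;=\;\log\!\left(1+\frac{4x(z)}{(x(z)-1)^{2}+y(z)^{2}}\right),
\end{equation*}
which is the key formula.

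Next I would use the fact that $x(z)\geq 0$ (since $\mu$ is a positive measure and $P_{z}$ is a positive kernel), so dropping the $y(z)^{2}$ term in the denominator only increases the right-hand side. This gives the pointwise inequality
\begin{equation*}
\log\frac{1}{|\varphi(z)|}\;\leq\;\tfrac{1}{2}\log\!\left(1+\frac{4x(z)}{(x(z)-1)^{2}}\right).
\end{equation*}
Taking expectations and invoking the assumed upper bound of the corollary, one obtains
\begin{equation*}
\sup_{|z|=r}\mathbb{E}\!\left[\log\frac{1}{|\varphi(z)|}\right]\;\lesssim\;(1-r)^{\gamma^{2}/8-\varepsilon},
\end{equation*}
so Corollary~\ref{coro:CoroThm2} applies and delivers the first part of Theorem~\ref{thm2}.

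I expect no real obstacle here: the content of the corollary is purely to replace the quantity $\log(1/|\varphi(z)|)$ by something easier to analyse directly in terms of the random real and imaginary parts of the Poisson/conjugate extensions of $\mu$. The only minor care to take is the sign of $x(z)$ and the observation that keeping the $y(z)^{2}$-term would also have worked but is unnecessary; dropping it gives a cleaner target for the subsequent perturbative analysis announced in Section~\ref{subse:control}.
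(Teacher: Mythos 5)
Your proposal is correct and matches the paper's argument essentially verbatim: the key step is the identity $\log(1/|\varphi(z)|)=\tfrac12\log\bigl(1+\tfrac{4x(z)}{(x(z)-1)^2+y(z)^2}\bigr)$ together with $x(z)\geq 0$, so that discarding the nonnegative $y(z)^2$ from the denominator gives the pointwise upper bound, after which Corollary~\ref{coro:CoroThm2} (itself a direct consequence of Lemma~\ref{lemm:densityupperbound}) applies. No gap; the paper merely leaves the "drop $y^2$" step implicit where you state it explicitly.
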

This follows directly from Lemma~\ref{lemm:densityupperbound} by writing 
\begin{equation*}\label{eq:ExpansionVarphi}
\mathbb{E}\left[\log\left(\frac{1}{|\varphi(z)|}\right)\right]=\mathbb{E}\left[\log\left(\frac{|h(z)+1|}{|h(z)-1|}\right)\right]=\frac{1}{2}\mathbb{E}\left[\log\left(1+\frac{4x(z)}{(x(z)-1)^2+y(z)^2}\right)\right].
\end{equation*}

\subsection{Control of the log-singularity via perturbation}\label{subse:control}
We first establish an auxiliary result in the spirit of Lemma~\ref{lem:integrallemma}, factorizing out this time two independent Gaussian components. The idea is the same: by establishing a lower bound in the derivatives of certain directions, we can reduce the estimate with log-singularity to a relatively classical moment estimate. Now that we parametrize the Gaussian perturbation in higher dimensions, the estimation becomes considerably more complicated and we need to study level sets of convex functions with suitable derivative bounds.

The proof of the following  proposition is technical and will be postponed to Section~\ref{subse:existence}.
\begin{prop}\label{prop:RankTwoAssumption}
Consider a log-correlated Gaussian field $X$ on $\T$ with kernel
\begin{equation*}
    K(\theta,\theta')=-\log|\theta-\theta'|+g(\theta,\theta')
\end{equation*}
with $g\in W^{s,2}(\T^2)$ for some $s>1$. There exist two continuous and real functions $f_1,f_2$  on $\T$ satisfying
\begin{equation*}
\inf_{t\in [0,2\pi)}\left(|f_1(\theta)|^2+|f_2(\theta)|^2\right)>0
\end{equation*}
and a decomposition of $X$ into three independent Gaussian components,
\begin{equation*}
    X(\theta)=V_1 f_1(\theta)+V_2 f_2(\theta)+\widetilde{X}(\theta)
\end{equation*}
with $V_1,V_2$ i.i.d. standard Gaussians and independent of the residual field $\widetilde{X}$.
\end{prop}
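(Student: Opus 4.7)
The plan is to realise the desired decomposition as a Gaussian orthogonal projection induced by a carefully chosen pair of orthonormal elements of the Cameron--Martin space $H_K$ associated with the covariance $K$. By the canonical isometry $H_K \to \mathcal{G}_X$, where $\mathcal{G}_X$ is the closed linear span of $\{X(\theta): \theta \in \T\}$ in the Gaussian $L^2$, any orthonormal pair $f_1, f_2 \in H_K$ produces i.i.d.\ standard Gaussians $V_j$ satisfying $\mathbb{E}[V_j X(\theta)] = f_j(\theta)$, and then the residual field $\widetilde X(\theta) := X(\theta) - V_1 f_1(\theta) - V_2 f_2(\theta)$ is centred Gaussian with covariance $K(\theta, \theta') - f_1(\theta)f_1(\theta') - f_2(\theta)f_2(\theta')$ (still positive semi-definite) and is uncorrelated with, hence independent of, $(V_1, V_2)$. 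The whole task thus reduces to finding two orthonormal elements $f_1, f_2 \in H_K$ which are continuous on $\T$ and satisfy $\inf_\theta (f_1(\theta)^2 + f_2(\theta)^2) > 0$.

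First I would show that $H_K$ contains a rich supply of continuous functions by considering test-function averages $T_K \psi(\theta) := \int_\T K(\theta, \theta') \psi(\theta')\, d\theta'$ for $\psi \in C^\infty(\T)$. The canonical part $T_{-\log}\psi$ is continuous because convolution with the locally integrable log kernel smooths bounded functions, while $g$ itself is continuous thanks to the Sobolev embedding $W^{s,2}(\T^2) \hookrightarrow C(\T^2)$ valid for $s > 1$, making $T_g \psi$ continuous as well. Moreover, Gram--Schmidt orthonormalisation in the $H_K$-inner product of any two linearly independent continuous $g_1, g_2 \in H_K$ produces an orthonormal pair $f_j = \sum_k a_{jk} g_k$ via an invertible $2\times 2$ matrix $A=(a_{jk})$, and the elementary bound $|f_1(\theta)|^2 + |f_2(\theta)|^2 \geq \sigma_{\min}(A)^2 \bigl(|g_1(\theta)|^2 + |g_2(\theta)|^2\bigr)$ transfers the uniform positivity from $(g_1, g_2)$ to $(f_1, f_2)$.

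The main obstacle is therefore to exhibit two continuous linearly independent elements $g_1, g_2 \in H_K$ with $\inf_\theta(g_1^2 + g_2^2) > 0$. The natural candidates are $g_1 = \cos\theta$ and $g_2 = \sin\theta$: for $g \equiv 0$ these lie in $H_C$ (the Cameron--Martin space of the canonical field, essentially the mean-zero part of $H^{1/2}(\T)$), and their sum of squares is identically $1$; this reflects and extends the single-mode choice $V = B_1$ used in the canonical-field proof of Theorem~\ref{thm1}. For general $g$, I would establish $\cos\theta, \sin\theta \in H_K$ by solving the first-kind integral equations $T_K \psi = \cos\theta$ and $T_K \psi = \sin\theta$ via the Fredholm alternative: since $g \in L^2(\T^2)$, the operator $T_g$ is Hilbert--Schmidt on $L^2(\T)$, and the hypothesis $g \in W^{s,2}(\T^2)$ with $s > 1$ provides enough extra smoothing so that $T_C^{-1}T_g$ is compact on mean-zero $L^2(\T)$, where $T_C$ is diagonal with eigenvalues $1/(2n)$. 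Consequently $(I + T_C^{-1}T_g)\psi = T_C^{-1}\cos\theta = 2\cos\theta$ is either exactly solvable, or one approximates $\cos\theta, \sin\theta$ in $C(\T)$ by elements in the range of $T_K$, producing $g_1, g_2$ with $|g_1|^2 + |g_2|^2 \geq 1 - \varepsilon$. The delicate point, and presumably the substance of the deferred Section~\ref{subse:existence}, is to carry out this Fredholm/approximation scheme carefully under the minimal regularity assumption $s > 1$, in particular controlling any finite-dimensional obstruction coming from a non-trivial nullspace of $T_K$.
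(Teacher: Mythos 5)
Your reduction to the existence of two continuous, linearly independent $g_1,g_2\in H_K$ with $\inf_\theta(g_1^2+g_2^2)>0$ is essentially the same one the paper makes (the paper phrases it as finding trigonometric polynomials $f_1,f_2$ with $K-f_1\otimes f_1-f_2\otimes f_2\geq 0$, which is the same statement after Gram--Schmidt and rescaling), and the bound $|f_1(\theta)|^2+|f_2(\theta)|^2\geq\sigma_{\min}(A)^2(|g_1(\theta)|^2+|g_2(\theta)|^2)$ is a correct and convenient way to propagate the non-vanishing through orthonormalisation. The gap lies precisely in the step you describe as ``the delicate point'': your plan hinges on $\cos\theta,\sin\theta$ (or something uniformly close to them) lying in $H_K$, and the Fredholm alternative gives you no right to assume this. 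If $(I+T_C^{-1/2}T_gT_C^{-1/2})$ has a nontrivial kernel of dimension $l$, membership in $H_K$ is obstructed by $l$ linear constraints $\langle f,\psi_j\rangle=0$, and there is simply no reason $\cos\theta$ and $\sin\theta$ should satisfy them. The approximation fallback --- ``approximate $\cos\theta,\sin\theta$ in $C(\T)$ by elements of the range of $T_K$'' --- does not repair this: the range of $T_K$ is a fixed proper subspace, and uniform approximation of $\cos,\sin$ by elements of it may be impossible; even when it is possible, nothing guarantees the approximants are simultaneously non-vanishing. Example~\ref{co:yksi} in the paper exhibits exactly this phenomenon at the operator level: one can be forced off any pre-chosen pair of modes.

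What the paper does instead, and what your proposal is missing, is to (i) prove (Lemmas~\ref{le:hilbert}--\ref{le:hilbert2}) that the obstruction is \emph{finite-dimensional} --- membership of $\varepsilon^{1/2}f$ in $H_K$ holds iff $f\perp\psi_1,\dots,\psi_l$ for finitely many explicit $\psi_j=C_1^{-1/2}\varphi_j\in L^2(\T)$, using compactness of $C_1^{-1/2}AC_1^{-1/2}$ (here $A\in W^{2,2}$ is used to show $\psi_j\in L^2$); and then (ii) exploit the finite codimension by taking $f_1,f_2$ to be trigonometric polynomials of degree $\geq l+1$, so that there are enough free coefficients to satisfy the $l$ constraints \emph{and}, by a perturbation-and-induction argument on the zeros of $f_1$, to choose $f_2$ that avoids them. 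Your sketch supplies neither of these steps: it does not show the constraint set is finite-dimensional in a form usable for construction, and it proposes no mechanism for producing a pair satisfying the constraints while avoiding common zeros. Replacing ``Fredholm/approximation scheme'' with the high-degree polynomial construction is the substance you would need to add.
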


 If we denote by $\mu$ and $\widetilde{\mu}$ the associated Gaussian multiplicative chaos to $X$ and $\widetilde{X}$, then for $z\in\D$,
\begin{equation*}
    x(z)=\int_{\T}\exp\left(\gamma V_1f_1(\theta)+\gamma V_2f_2(\theta)-\frac{\gamma^2}{2}(f_1(\theta)^2+f_2(\theta)^2)\right)P_z(\theta)d\widetilde{\mu}(\theta)
\end{equation*}
where $P_z(\theta)$ is the Poisson kernel as before. Let $y_1,y_2$ parametrize the Gaussians $V_1,V_2$ and define the random function
\begin{equation}\label{eq:definition_u00}
  u(y_1,y_2)=\int_{\T}\exp\left(\gamma y_1f_1(\theta)+\gamma y_2f_2(\theta)-\frac{\gamma^2}{2}(f_1(\theta)^2+f_2(\theta)^2)\right)P_z(\theta)d\widetilde{\mu}(\theta).
\end{equation}
Especially, by the tower law we obtain 
\begin{equation*}\label{eq:-2}
\mathbb{E}\left[\log\left(1+\frac{4x(z)}{(x(z)-1)^2}\right)\right] \;=\; \frac{1}{2\pi}\mathbb{E}\left[\int_{\R^2} \log\left(1+\frac{4u(y_1,y_2)}{(u(y_1,y_2)-1)^2}\right)e^{-\gamma^2(y_1^2+y_2^2)/2}dy_1dy_2\right].
\end{equation*}

\medskip

The key to the estimation of the above integral will be the following analytic proposition.
\begin{prop}\label{prop:LogSingularity}
Assume that $u:\mathbb{R}^2\to\mathbb{R}$ is positive, smooth, convex, and satisfies $\Delta u\geq \kappa u$ for some constant $\kappa>0$ together with $|Du|\leq Ku$ for some constant $K>0$.
\begin{enumerate}[label=(\roman*)]
    \item There is a constant $c_0<\infty$, independent of $u$, such that
    \begin{equation*}
        \int_{Q} \log\left(1+\frac{4u(y_1,y_2)}{(u(y_1,y_2)-1)^2}\right) dy_1dy_2 \leq c_0
    \end{equation*}
    for every square $Q\subset \mathbb{R}^2$ of side length $1$.
    \item The following integral is controlled by the value of $u(0,0)$:
    \begin{equation*}
    I\coloneqq\frac{1}{2\pi}\int_{\mathbb{R}^2} \left[\log\left(1+\frac{4u(y_1,y_2)}{(u(y_1,y_2)-1)^2}\right)\right]e^{-\gamma^2(y_1^2+y_2^2)/2}dy_1dy_2\leq  c_1(u(0,0)\wedge c_2).
    \end{equation*}
    The constants $c_1$ and $c_2$ depend only on $\kappa, K$ and $\gamma$.
\end{enumerate}
\end{prop}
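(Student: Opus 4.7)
The strategy is to reduce part (i) to a quantitative level-set estimate, and to derive part (ii) from (i) by exploiting the Gaussian weight together with the Harnack-type consequence $u(y)\le u(0,0)\,e^{K|y|}$ of $|Du|\le Ku$.

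For part (i), rewrite the integrand as $F(u):=\log(1+4u/(u-1)^2)=2\log((u+1)/|u-1|)$. This is nonnegative, bounded by $2\log 5$ on $\{|u-1|\ge 1/2\}$, and decays to $0$ as $u\to 0^+$ or $u\to\infty$; the only obstruction to integrability is a logarithmic singularity on $\{u=1\}$. Thus it suffices to establish the level-set estimate
\begin{equation}\label{eq:plan-levelset}
\bigl|\{y\in Q:\,|u(y)-1|<\delta\}\bigr|\le C(\kappa,K)\,\delta^{1/2},\qquad \delta\in(0,1/2),
\end{equation}
after which
\[
\int_Q F(u)\,dy \;\le\; 2\log 5 \,+\, \int_0^\infty \bigl|\{|u-1|<2e^{-s/2}\}\cap Q\bigr|\,ds \;\le\; c_0
\]
by the layer-cake identity applied to $\{F(u)>s\}\subset\{|u-1|<2e^{-s/2}\}$. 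To prove \eqref{eq:plan-levelset} I would use that on $S_\delta:=\{|u-1|<\delta\}$ one has $u\in(1/2,3/2)$, hence $\Delta u\ge \kappa/2$ and $|Du|\le 3K/2$. Since $\partial_1^2 u+\partial_2^2 u=\Delta u\ge \kappa/2$ with both summands nonnegative (by convexity), $S_\delta\subset S_\delta^{(1)}\cup S_\delta^{(2)}$, where $S_\delta^{(i)}:=\{y\in S_\delta:\,\partial_i^2 u\ge \kappa/4\}$. By symmetry it is enough to bound $|S_\delta^{(1)}|$ by Fubini in $y_2$: on each horizontal slice $\ell_s$, the 1D restriction $v(t):=u(t,s)$ is a positive convex function with $v''\ge \kappa/4$ on $S_\delta^{(1)}\cap\ell_s$ and $|v'|\le 3K/2$ there. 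On each connected component $I$ of this slice, since $v$ remains in $(1-\delta,1+\delta)$ the mean value theorem (applied to the nondecreasing $v'$) yields a point $t^*\in I$ with $|v'(t^*)|\le 2\delta/|I|$; a Taylor expansion using $v''\ge \kappa/4$ then gives the quadratic inequality $(\kappa/8)(t-t^*)^2+v'(t^*)(t-t^*)\le 2\delta$ on $I$, whence $|I|\le C\sqrt{\delta/\kappa}$ after eliminating $v'(t^*)$. Summing over components and integrating in $s$ yields \eqref{eq:plan-levelset}.

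For part (ii), the constant bound $I\le c_2$ follows immediately from (i) by covering $\R^2$ with the unit squares $Q_k=k+[0,1]^2$ ($k\in\Z^2$) and summing $\int_{Q_k}F(u)\,dy\le c_0$ against the Gaussian weight. For the linear bound, the estimate $|D\log u|\le K$ gives the multiplicative Harnack bound $u(y)\le u(0,0)\,e^{K|y|}$; on the ball $B_R$ with $R:=K^{-1}\log(1/(2u(0,0)))$ (relevant only for $u(0,0)<1/4$) we then have $u\le 1/2$, and by $\log(1+x)\le x$,
\[
F(u)=\log\bigl(1+4u/(u-1)^2\bigr)\le \log(1+16u)\le 16\,u.
\]
Therefore
\[
\frac{1}{2\pi}\int_{B_R}F(u)\,e^{-\gamma^2|y|^2/2}\,dy \;\le\; \frac{16}{2\pi}\int_{\R^2}u(y)\,e^{-\gamma^2|y|^2/2}\,dy \;\le\; C(K,\gamma)\,u(0,0),
\]
since $\int_{\R^2}e^{K|y|-\gamma^2|y|^2/2}\,dy<\infty$ by Gaussian domination. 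The complementary region $\{|y|>R\}$ contributes at most $c_0\,e^{-\gamma^2 R^2/4}$ by (i) combined with Gaussian tail estimates, which is super-polynomially smaller than $u(0,0)$ as $u(0,0)\to 0$ and is therefore dominated by the linear term. Combined with the constant bound $c_2$, this gives $I\le c_1(u(0,0)\wedge c_2)$.

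The main obstacle is the level-set estimate \eqref{eq:plan-levelset}: the direction of dominant curvature of $u$ may rotate across $Q$, so one must carefully combine the $S_\delta^{(1)}$ and $S_\delta^{(2)}$ pieces, and within a slice the 1D analysis hinges on using the gradient bound $|Du|\le Ku$ to convert the purely local information $v''\ge \kappa/4$ on $S_\delta^{(1)}\cap\ell_s$ into the sought $\sqrt{\delta}$ decay (via the mean-value trick that locates a point of small $|v'|$ inside each component). Once \eqref{eq:plan-levelset} is in hand, part (ii) is a standard two-regime decomposition -- near the origin versus far -- in which the Gaussian weight takes care of the tail.
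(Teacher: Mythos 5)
Your plan for part~(ii) matches the paper's almost exactly: both split into the regime $u(0,0)$ small (using $|D\log u|\le K$ to get $u(y)\le u(0,0)e^{K|y|}$, so that $u\le 1/2$ on a ball of radius $\sim K^{-1}\log(1/u(0,0))$ where $F(u)\lesssim u$, plus a Gaussian-tail argument outside it) versus $u(0,0)$ bounded below (constant bound from part~(i) by tiling). That portion is correct.

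For part~(i) you take a genuinely different route. The paper works directly with the two-dimensional geometry of the convex sublevel sets $\Omega_t=\{u<t\}$: an Ahlfors-type growth lemma ($M_u(B)\ge(1+\kappa r^2/4)m_u(B)$) shows that $\Omega_{t_2}\setminus\Omega_{t_1}$ lies in an $\varepsilon$-neighbourhood of the convex curve $\partial\Omega_{t_2}$ with $\varepsilon\sim\sqrt{\kappa^{-1}(t_2/t_1-1)}$, and the perimeter bound for convex sets in $2Q$ gives the area bound $\lesssim\sqrt t$. You instead aim for the same level-set estimate $|\{|u-1|<\delta\}\cap Q|\lesssim\delta^{1/2}$ by a slicing/Fubini argument after splitting $S_\delta=S_\delta^{(1)}\cup S_\delta^{(2)}$ according to which of $\partial_1^2u,\partial_2^2u$ exceeds $\kappa/4$. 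This is in spirit more elementary, but as written there is a gap: you bound the length of each connected component $I$ of $S_\delta^{(1)}\cap\ell_s$ by $C\sqrt{\delta/\kappa}$ and then ``sum over components,'' yet nothing controls the \emph{number} of components. The set $\{\partial_1^2u\ge\kappa/4\}$ can meet a horizontal line in arbitrarily many pieces (convexity forces $\partial_1^2u\ge0$, not monotonicity), so the component-wise bound does not by itself yield $|S_\delta^{(1)}\cap\ell_s|\lesssim\sqrt\delta$.

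The slicing idea can nevertheless be repaired without changing its character. By convexity of $v(t)=u(t,s)$, the set $S_\delta\cap\ell_s=\{|v-1|<\delta\}$ has at most two connected components; on such a component $J=[a,b]$ (after possibly splitting once more at the interior minimum of $v$, so that $v'\ge0$ on the piece one considers), monotonicity of $v'$ gives for every $t\in J$
\begin{equation*}
2\delta \;>\; v(t)-v(a)\;=\;\int_a^t v'(\tau)\,d\tau\;\ge\;\int_a^t\Big(\frac{\kappa}{4}\,\big|\{v''\ge\kappa/4\}\cap[a,\tau]\big|\Big)\,d\tau
\;=\;\frac{\kappa}{4}\int_a^t (t-\tau)\,\mathbf 1_{\{v''\ge\kappa/4\}}(\tau)\,d\tau ,
\end{equation*}
and minimizing the right side over sets of fixed measure yields $|\{v''\ge\kappa/4\}\cap J|\lesssim\sqrt{\delta/\kappa}$ in one stroke, without ever counting components. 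Summing over the at-most-two (or four, after the split) pieces of $S_\delta\cap\ell_s$ and integrating in $s$ then gives the level-set estimate, and the remainder of your argument goes through. With this correction your proof of part~(i) is a valid alternative to the paper's Ahlfors-lemma/convex-perimeter argument.
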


Before proving this result we first check that the function defined via \eqref{eq:definition_u00}  satisfies the conditions of the above proposition. 
\begin{lemm}\label{le:satisfies}
The function $u$ defined in  \eqref{eq:definition_u00} almost surely satisfies the conditions of the above proposition.
\end{lemm}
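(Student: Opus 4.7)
The plan is to verify, one at a time, each of the five conditions required by Proposition~\ref{prop:LogSingularity}: positivity, smoothness, convexity, the elliptic lower bound $\Delta u \geq \kappa u$, and the logarithmic-gradient bound $|Du| \leq K u$. All of these will follow fairly directly from the explicit integral representation~\eqref{eq:definition_u00}, the continuity of $f_1,f_2$ on $\T$, the strict positivity of the Poisson kernel $P_z$ for fixed $z\in\D$, and the almost-sure non-triviality of $\widetilde{\mu}$. I would begin by dispatching positivity and smoothness together: the integrand of~\eqref{eq:definition_u00} is strictly positive pointwise on $\T\times\R^2$, and since $\widetilde{\mu}$ almost surely has $\widetilde{\mu}(\T)>0$, the integral $u(y_1,y_2)$ is strictly positive for every $(y_1,y_2)$ on a full-measure event. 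Smoothness, and the legality of differentiating under the integral sign, follows from dominated convergence applied locally in $(y_1,y_2)$, exploiting that the continuous functions $f_1,f_2$ are uniformly bounded on $\T$.

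The heart of the argument consists of three short differentiations. Convexity is immediate since, for each fixed $\theta\in\T$, the function $(y_1,y_2)\mapsto \exp(\gamma y_1 f_1(\theta)+\gamma y_2 f_2(\theta))$ is the exponential of a linear function and hence convex; $u$ is a positive integral (in $\theta$) of such convex functions, so $u$ is convex. Differentiating twice in $(y_1,y_2)$ yields
\begin{equation*}
\Delta u(y_1,y_2)=\int_{\T}\gamma^{2}\bigl(f_1(\theta)^{2}+f_2(\theta)^{2}\bigr)\,\Psi(\theta,y_1,y_2)\,d\widetilde{\mu}(\theta),
\end{equation*}
where $\Psi(\theta,y_1,y_2)$ denotes the integrand appearing in~\eqref{eq:definition_u00}. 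This is where the key structural input of Proposition~\ref{prop:RankTwoAssumption} enters: the hypothesis $c_0:=\inf_{\theta\in\T}\bigl(f_1(\theta)^{2}+f_2(\theta)^{2}\bigr)>0$ immediately yields $\Delta u\geq \kappa u$ with $\kappa:=\gamma^{2}c_0$. An analogous one-line computation shows $|\partial_{y_i}u|\leq \gamma\|f_i\|_{L^{\infty}(\T)}\,u$ for $i=1,2$, so that $|Du|\leq K u$ with $K:=\sqrt{2}\,\gamma\max(\|f_1\|_\infty,\|f_2\|_\infty)$.

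I do not expect a genuine obstacle at this stage; the substantive work has already been done upstream, in the construction of the rank-two perturbation guaranteed by Proposition~\ref{prop:RankTwoAssumption}, where the quantitative nondegeneracy $\inf_{\theta}(f_1^{2}+f_2^{2})>0$ is secured. The only point requiring a small amount of care is that the five properties must hold simultaneously for every $(y_1,y_2)\in\R^2$ on a single almost-sure event; this is automatic, since the only source of randomness is $\widetilde{\mu}$, the constants $\kappa$ and $K$ are deterministic quantities depending only on $\gamma$ and on the deterministic functions $f_1,f_2$ from Proposition~\ref{prop:RankTwoAssumption}, and the full-measure event $\{\widetilde{\mu}(\T)>0\}$ is sufficient for every claim above.
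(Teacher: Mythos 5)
Your proof is correct and follows essentially the same route as the paper: convexity from the pointwise convexity of exponentials of linear functions, the lower bound $\Delta u \geq \gamma^2\inf_\theta(f_1^2+f_2^2)\,u$ by differentiating under the integral sign, and the gradient bound $|Du|\leq Ku$ by a parallel one-derivative computation. You simply spell out the positivity, smoothness, and almost-sure-event bookkeeping that the paper dispatches in a phrase.
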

\begin{proof} 
Note first  that $(y_1,y_2)\mapsto \exp(ay_1+by_2)$ is convex, and sums (or integrals of a parametrized family) of convex functions stays convex, so $u$ is convex.  Next, we may estimate
\begin{eqnarray*}
    \Delta u(y_1,y_2) &=&
  \gamma^2\int_{\T}\big( f_1(\theta)^2+f_2(\theta)^2\big)\exp\left(\gamma y_1f_1(\theta)+\gamma y_2f_2(\theta)-\frac{\gamma^2}{2}\big(f_1(\theta)^2+f_2(\theta)^2\big)\right) \\
  &&\phantom{aaaaaaaaaaaaaaaaaa}\times P_z(\theta)d\widetilde{\mu}(\theta)\\
  &\geq& A\gamma^2u(y_1,y_2),
\end{eqnarray*}
where $A=\inf_{t\in [0,2\pi)}\big(|f_1(\theta)|^2+|f_2(\theta)|^2\big)>0$. The upper bounds for the derivatives follow in a similar way, and finally the positivity of $u$ is evident.
\end{proof}

We write hereafter $y=(y_1,y_2)\in\mathbb{R}^2$ for simplicity.
\begin{lemm}\label{lem:Ahlfors}
Assume that $u:\mathbb{R}^2\to\mathbb{R}$ is positive, of class $C^2$, and satisfies $\Delta u\geq \kappa u$ with $\kappa>0$. For any ball $B=\overline{B(y_0,r)}\subset\mathbb{R}^2$, denote $M_u(B)\coloneqq\max\limits_{y\in B} u(y)$ and $m_u(B)\coloneqq\min\limits_{y\in B}u(y)$. Then
\begin{equation*}
M_u(B)\geq (1+\kappa r^2/4)m_u(B).
\end{equation*}
\end{lemm}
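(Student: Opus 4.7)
The plan is to exploit $\Delta u \geq \kappa u$ by subtracting a judiciously chosen quadratic so as to produce a genuinely subharmonic auxiliary function, and then to apply the sub-mean value inequality on the bounding \emph{circle} $\partial B$ (not on the full disk, for a reason I explain below).

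First I would set $m \coloneqq m_u(B)$ and define
\[
\phi(y) \coloneqq u(y) - \frac{\kappa m}{4}|y-y_0|^2.
\]
Since $u \geq m$ pointwise on $B$, the hypothesis gives $\Delta u \geq \kappa u \geq \kappa m$; combined with $\Delta(|y-y_0|^2) = 4$ in $\mathbb{R}^2$, this shows $\Delta \phi \geq 0$, i.e.\ $\phi$ is subharmonic on $B$. Applying the classical sub-mean value inequality over $\partial B$,
\[
\phi(y_0) \;\leq\; \frac{1}{2\pi r}\oint_{\partial B}\phi\,ds,
\]
I would then evaluate both sides. The left-hand side is simply $u(y_0) \geq m$. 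The right-hand side equals the circular average of $u$ minus $\frac{\kappa m r^2}{4}$ (since $|y-y_0|^2 \equiv r^2$ on $\partial B$), and the circular average of $u$ is at most $M_u(B)$. Rearranging produces exactly $M_u(B) \geq m(1 + \kappa r^2/4)$.

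I do not anticipate any genuine obstacle here. The one subtlety worth flagging concerns the choice of mean value inequality: integrating the quadratic over the disk would introduce $\int_B |y-y_0|^2\,dA = \pi r^4/2$, giving only the weaker constant $1 + \kappa r^2/8$, whereas integrating over the bounding circle introduces $\oint_{\partial B}|y-y_0|^2\,ds = 2\pi r^3$, which is what delivers the sharp factor $1/4$ stated in the lemma.
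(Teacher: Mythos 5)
Your proof is correct and follows essentially the same strategy as the paper: both subtract the quadratic $\frac{\kappa m}{4}|y-y_0|^2$ to obtain a subharmonic auxiliary function. The paper then invokes the maximum principle to extract a single boundary point with $u(y_1)-\tfrac{\kappa m}{4}r^2\geq u(y_0)$, whereas you average over $\partial B$ via the sub-mean value inequality — a cosmetic variant of the same argument.
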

\begin{proof}
By translation we may assume that $y_0=0$. The assumption yields that $\Delta(u(y)-A|y|^2)\geq 0$ in $B$ for any $A\leq \kappa m_u/4$ . Thus the subharmonic function $y\mapsto u(y)-A|y|^2$ achieves its maximum at the boundary. In particular, for some $y_1\in \partial B$ we have $u(y_1)-Ar^2\geq u(0)$, which yields the claim.
\end{proof}

Intuitively, we are interested in the geometry around the level set $u(y)=1$, where the singularity is present. All non-degenerate level sets are convex Jordan curves by convexity of $u$, and the above lemma can be transformed into an upper bound on the distance between levels sets of $1$ and $1\pm t$.

\begin{proof}[Proof of Proposition~\ref{prop:LogSingularity}]
We first prove item~(i). According to Lemma~\ref{lem:Ahlfors}, our function $u$ is convex, tends to infinity and is not constant in any non-empty open set. Denote $m=\inf_{z\in\R^2} u(z)\geq 0$. The sublevel sets $\Omega_t\coloneqq\{y:u(y)<t\}$  for $t>m$ are convex, non-empty and open sets, increasing in $t$, and the level sets $S_t\coloneqq\{y:u(y)=t\}$  satisfy $\partial \Omega_t=S_t$. In turn, $S_m$ is either empty,  a point, or a closed line (segment).

Let $m<t<t_2$ and assume that $y_0\in\partial\Omega_{t}$. By convexity, for given $r>0$, we may pick an open ball $B$ of radius $r/2$ such that $B\subset\mathbb{C}\setminus\overline{\Omega}_{t}$ and $y_0\in \overline{B}$. Lemma~\ref{lem:Ahlfors} implies that $\overline{B}\cap\partial\Omega_{t_2}\neq\emptyset$ as soon as $ t(1+\kappa r^2/16)\geq t_2$. Let us denote by $A_\varepsilon\coloneqq\{y\in\mathbb{C}:d(y,A)\leq\varepsilon\}$ the $\varepsilon$-fattening of a given set $A\subset\mathbb{C}$, we thus have shown that $\partial\Omega_{t}$ is contained in $(\partial\Omega_{t_2})_\varepsilon$ if $\varepsilon\geq 4\sqrt{\kappa^{-1}(t_2/t-1)}$. Especially, if $m\leq t_1<t_2$, as $\Omega_{t_2}\setminus \Omega_{t_1}=\bigcup_{t_1\leq t<t_2}\partial \Omega_t$, we deduce that
\begin{equation}\label{eq:Flattening}
\Omega_{t_2}\setminus \Omega_{t_1} \subset (\partial\Omega_{t_2})_\varepsilon \qquad\text{with}\qquad \varepsilon=4\sqrt{\kappa^{-1}(t_2/t_1-1)}.
\end{equation}

We now estimate the size of the intersection $(\Omega_{t_2}\setminus \Omega_{t_1})\cap Q$. Assuming $\varepsilon<1/2$ we obviously have 
\begin{equation*}
(\partial\Omega_{t_2})_\varepsilon\cap Q\subset \big(\partial(\Omega_{t_2}\cap 2Q)\big)_\varepsilon,
\end{equation*}
where $2Q$ is the twice dilated cube with the same center as $Q$. The domain $\Omega_{t_2}\cap 2Q$ is convex, and its boundary has length at most $8$ (one can define a contraction by sending each point of $\partial(2Q)$ to its nearest point in $\partial(\Omega_{t_2}\cap 2Q)$ and this shortens the boundary length), and we deduce by~\eqref{eq:Flattening} that
\begin{equation*}
|(\Omega_{t_2}\setminus \Omega_{t_1})\cap Q|\;\leq\;  4\Big( 8\times \kappa^{-1/2}\sqrt{t_2/t_1-1}+ \pi  \Big(\kappa^{-1/2}\sqrt{t_2/t_1-1}\Big)^2\Big),
\end{equation*}
which implies for  $t\in (0,1/2)$ that
\begin{equation*}
|(\Omega_{1+t}\setminus \Omega_{1-t})\cap Q| \leq 80(\kappa^{-1/2}+\kappa^{-1})\sqrt{2t/(1-t)}\leq c_1\sqrt{t},
\end{equation*}
where $c_1\coloneqq 160(\kappa^{-1/2}+\kappa^{-1})$. Previously we were implicitly assuming that $m\leq 1/2$, but the analysis goes through with obvious changes when $m\in (1/2, 3/2)$, and the case $m\geq  3/2$ is not needed.

Finally, this enables us to estimate, separating the case $|u(y)-1|>1/2$ and $|u(y)-1|\leq1/2$,
\begin{equation*}
\begin{split}
&\int_{Q} \log\left(1+\frac{4u(y)}{(u(y)-1)^2}\right) d^2y\\
\leq{}&\log (1+ 6(1/2)^{-2})+\int_{(\Omega_{3/2}\setminus \Omega_{1/2})\cap Q} \log\left(1+\frac{6}{(u(y)-1)^2}\right) d^2y\\
\leq{}&\log(25)+\int_0^{1/2} \frac{c_1}{{2}}t^{-1/2}\log (1+6t^{-2})dt\eqqcolon c_0.
\end{split}
\end{equation*}

\medskip

For item~(ii), let us first assume that $u(0)\eqqcolon a< c_2$, where $c_2\leq e^{-4K}/2$ will be fixed later. Then we partition $\mathbb{R}^2$ into squares $Q_{m,n}\coloneqq(m,n)+[0,1)^2$, where $(m,n)\in\mathbb{Z}^2$, and apply the estimate of item~(i) on each of these squares separately. We invoke the knowledge  $|\nabla u|\leq Ku$, which implies in the polar coordinates $|du/dr|\leq Ku$, and $u(y)\leq e^{K|y|}a$. Especially $u(y)\leq 1/2$ inside the ball $B(0, \log(1/2a)/K)$. 

We then consider separately the integrals over the domains $B(0,\log(1/2a)/K)$ and $\mathbb{C}\setminus B(0,\log(1/2a)/K)$. In the first domain, $\log\big(1+\frac{4u(y)}{(u(y)-1)^2}\big)\lesssim u(y)$ and we bound the integral by
\begin{eqnarray*}
    &&C\frac{1}{2\pi}\int_{B(0,\log(1/2a)/K)}u(y)e^{-\gamma^2|y|^2/2}d^2y\; \lesssim \; a\int_{0}^{\infty}re^{Kr}e^{-\gamma^2r^2/2}dr\\
    &\lesssim& a\int_{0}^{\infty}re^{(1+K/\gamma) r}e^{-r^2/2}dr  \;   \lesssim \; e^{\frac{(1+K/\gamma)^2}{2}}a.
\end{eqnarray*}

When dealing with the second domain, let $l\geq \log(1/2a)/K\geq 4$ be an integer. We apply part~(i) on the square $Q_{m,n}$ with $Q_{m,n}\cap\partial B(0,l)\neq\emptyset$, and see that   the integral over $Q_{m,n}$ is at most
\begin{equation*}
    c_0 e^{-\gamma^2(l-\sqrt{2})^2/2}.
\end{equation*}
Every square $Q_{m,n}$ intersects some $\partial B(0,l)$ with an integer $l$, and for each fixed $l\neq 0$, the number of such squares is at most $250861l$. Summing up we obtain the following upper bound in the second case:
\begin{equation*}
\begin{split}
    &c_0\sum\limits_{l\geq\log(1/2a)/K,~l\in\mathbb{N}} 250861le^{-\gamma^2(l-\sqrt{2})^2/2}\\
    \leq{}& 250861c_0\int_{\log(1/2a)/K-1-\sqrt{2}}^\infty (x+\sqrt{2}) e^{-\gamma^2x^2/2}dx\\
    \leq{}& 250861c_0\int_{\log(1/2a)/K-1-\sqrt{2}}^\infty 2x e^{-\gamma^2x^2/2}dx\\
    \lesssim{}& 2\times 250861c_0e^{-\gamma^2(\log(1/2a)/K-1-\sqrt{2})^2/2}\\
    \lesssim{}& c_0 a
\end{split}
\end{equation*}
where the first  inequality holds as $x\mapsto (x+\sqrt{2}) e^{-\gamma^2x^2/2}$ is decreasing on the range of integration, and the last one e.g. if  $-\log(2a)\geq 6+2K^2/\gamma^2$. All the needed constaints are clearly satisfied for $a<c_2<1$ by choosing $c_2$ small enough depending on $K$ and $\gamma.$

In the case $a\geq c_2$, it suffices to show that $I$ is bounded by a constant independent of $a$. This easily follows from item~(i) by again summing over all $Q_{m,n}$ with $(m,n)\in\mathbb{Z}^2$.
\end{proof}

The upshot of this section is:
\begin{coro}\label{coro:FinalSufficient}
The first part of Theorem~\ref{thm2} holds if, for all $z\in\mathbb{D}$ and $\varepsilon>0$,
\begin{equation*}
    \mathbb{E}\left[\widetilde{x}(z)\wedge 1\right]\lesssim(1-|z|)^{\frac{\gamma^2}{8}-\varepsilon}
\end{equation*}
where with the notations in the beginning of this section,
\begin{equation*}
    \widetilde{x}(z)=\int_{\T}P_z(\theta)d\widetilde{\mu}(\theta).
\end{equation*}
\end{coro}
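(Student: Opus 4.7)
The plan is to chain together the tools assembled in this section: the Gaussian integral representation behind Corollary \ref{lem:DensitySufficient}, the pointwise analytic estimate of Proposition \ref{prop:LogSingularity}(ii), and the identification of $u(0,0)$ with $\widetilde{x}(z)$ (up to deterministic constants). The upshot is that the hypothesis of this corollary feeds directly into that of Corollary \ref{lem:DensitySufficient}.

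First, I would condition on the residual field $\widetilde{X}$. By Lemma \ref{le:satisfies}, the random function $u$ defined in \eqref{eq:definition_u00} almost surely satisfies the hypotheses of Proposition \ref{prop:LogSingularity}, with constants $\kappa,K$ depending only on $\gamma$, $\|f_1\|_\infty, \|f_2\|_\infty$ and $\inf_\theta(|f_1(\theta)|^2+|f_2(\theta)|^2)$, all of which are deterministic by Proposition \ref{prop:RankTwoAssumption}. Applying part (ii) of Proposition \ref{prop:LogSingularity} conditionally on $\widetilde X$, together with the Gaussian integral expression of the excerpt, yields
\begin{equation*}
\mathbb{E}\!\left[\log\!\left(1+\frac{4x(z)}{(x(z)-1)^2}\right)\,\Big|\,\widetilde{X}\right] \;\leq\; c_1\bigl(u(0,0)\wedge c_2\bigr).
\end{equation*}

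Second, I would identify $u(0,0)$ with $\widetilde x(z)$. Evaluating \eqref{eq:definition_u00} at the origin gives
\begin{equation*}
u(0,0)=\int_{\T}\exp\!\left(-\tfrac{\gamma^2}{2}\bigl(f_1(\theta)^2+f_2(\theta)^2\bigr)\right)P_z(\theta)\,d\widetilde{\mu}(\theta).
\end{equation*}
Since $f_1,f_2$ are continuous on the compact set $\T$, the exponential factor is bounded between two positive deterministic constants, so $u(0,0)\asymp \widetilde x(z)$. In particular $u(0,0)\wedge c_2 \lesssim \widetilde x(z)\wedge 1$ (after absorbing $c_2$ and the equivalence constants). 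Taking expectations and combining with the assumed bound on $\mathbb{E}[\widetilde x(z)\wedge 1]$ gives
\begin{equation*}
\mathbb{E}\!\left[\log\!\left(1+\frac{4x(z)}{(x(z)-1)^2}\right)\right] \;\lesssim\; \mathbb{E}\bigl[\widetilde x(z)\wedge 1\bigr] \;\lesssim\; (1-|z|)^{\gamma^2/8-\varepsilon},
\end{equation*}
and the first part of Theorem \ref{thm2} follows from Corollary \ref{lem:DensitySufficient}.

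In truth there is no substantial obstacle left at this stage: all the delicate work has been concentrated in Proposition \ref{prop:LogSingularity} (handling the logarithmic singularity at the level set $\{u=1\}$ via the convexity and subharmonicity of $u$) and Proposition \ref{prop:RankTwoAssumption} (producing the two independent perturbations whose coefficients $f_1,f_2$ have no common zero). The only point deserving a little care is that the ``min with $1$'' in the statement is insensitive to the precise multiplicative comparison $u(0,0)\asymp\widetilde x(z)$, which is why the continuity of $f_1,f_2$ (not merely their pointwise nondegeneracy) is used here.
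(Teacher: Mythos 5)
Your proposal is correct and follows exactly the route the paper takes: the paper's proof is simply the one-liner ``Combine Corollary~\ref{lem:DensitySufficient}, Equation~\eqref{eq:definition_u00} and item~(ii) of Proposition~\ref{prop:LogSingularity},'' and your argument merely makes explicit the conditioning on $\widetilde X$, the comparison $u(0,0)\asymp\widetilde x(z)$ coming from the boundedness of the continuous functions $f_1,f_2$, and the elementary absorption of $c_2$ into $\widetilde x(z)\wedge 1$. Nothing to flag.
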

\begin{proof}
    Combine Corollary~\ref{lem:DensitySufficient}, Equation~\eqref{eq:definition_u00} and item~(ii) of Proposition~\ref{prop:LogSingularity}.
\end{proof}

\subsection{Existence of a rank two independent Gaussian summand}\label{subse:existence}

 We now prove Proposition~\ref{prop:RankTwoAssumption} by showing that one may actually choose $f_1$ and $f_2$  to be two trigonometric polynomials.  We start first by considering suitable compact perturbations of the identity operator.
\begin{lemm}\label{le:hilbert}
Assume that $T$ is compact and self-adjoint on a separable Hilbert space $H$ and $\id+T\geq 0$.
Then there are vectors $\varphi_1,\dots, \varphi_{l_0}$ with the following property: for a given vector $u\in H$ there exists $\varepsilon >0$ such that
\begin{equation*}
T-\varepsilon u\otimes u\geq 0.
\end{equation*}
if and only if
\begin{equation*}
\langle u,\varphi_j\rangle=0\qquad \textrm{for}\qquad j=1,\dots, l_0.
\end{equation*}
\end{lemm}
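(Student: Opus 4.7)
I read the displayed inequality $T-\varepsilon u\otimes u\geq 0$ as shorthand for the nonnegativity of $A-\varepsilon u\otimes u$, where $A:=\id+T$ is the compact perturbation of the identity mentioned at the start of the subsection; this interpretation is the only one under which the hypothesis $\id+T\geq 0$ plays a substantive role. The plan is then to take $\varphi_{1},\dots,\varphi_{l_{0}}$ to be an orthonormal basis of $\kernel A$, whose finite-dimensionality will come from the compactness of $T$.

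The first step is purely spectral. Since $T$ is compact and self-adjoint, the spectrum of $A=\id+T$ consists of eigenvalues of finite multiplicity whose only possible accumulation point is $1$. Combined with $A\geq 0$ this forces $K:=\kernel A$ to be finite-dimensional, and, crucially, isolates $0$ from the rest of the spectrum: there exists some $c>0$ such that $A\geq c\,P_{K^{\perp}}$, where $P_{K^{\perp}}$ denotes orthogonal projection onto $K^{\perp}$.

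The second step is the equivalence itself, which reduces to testing on the right vectors. For necessity, if $\langle u,\varphi_{j}\rangle\neq 0$ for some $j$ then testing $A-\varepsilon u\otimes u$ against $\varphi_{j}\in K$ yields $-\varepsilon|\langle u,\varphi_{j}\rangle|^{2}<0$ for every $\varepsilon>0$, ruling out positivity. For sufficiency, if $u\in K^{\perp}$, writing an arbitrary $v\in H$ as $v=v_{K}+v_{\perp}\in K\oplus K^{\perp}$ and using both $Av_{K}=0$ and the spectral gap yields
\begin{equation*}
\langle(A-\varepsilon u\otimes u)v,v\rangle\;\geq\;(c-\varepsilon\|u\|^{2})\|v_{\perp}\|^{2},
\end{equation*}
which is nonnegative as soon as $\varepsilon\leq c/\|u\|^{2}$.

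I do not expect any serious technical obstacle here; the argument is a direct application of the spectral theorem for compact self-adjoint operators. The only point worth underlining is the existence of the spectral gap $c>0$ separating $0$ from the rest of $\mathrm{spec}(A)$. This is precisely what the ``compact perturbation of the identity'' structure buys us: without it, if $0$ were an accumulation point of $\mathrm{spec}(A)$, then the list of obstructions to positivity would fail to be finite and the conclusion with $l_{0}<\infty$ would collapse.
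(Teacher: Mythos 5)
Your reading of the displayed inequality as $\id+T-\varepsilon\, u\otimes u\geq 0$ matches the paper's own proof (the $\id$ is missing from the lemma statement), and your argument is correct and essentially identical to the paper's: both take $\varphi_1,\dots,\varphi_{l_0}$ to span $\kernel(\id+T)$ — equivalently, the eigenspace of $T$ at eigenvalue $-1$ — use compactness to get $l_0<\infty$ and a spectral gap on the orthogonal complement, and test against the $\varphi_j$ for necessity. The only cosmetic difference is that you phrase the sufficiency estimate via the projection $P_{K^{\perp}}$ and Cauchy--Schwarz, whereas the paper writes the same inequality out in eigenbasis coordinates.
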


\begin{proof} 
Let $\{\varphi_n\}_{n=1}^\infty$ be an orthonormal basis of $H$ consisting of eiegnvectors of $T$, ordered so that the eigenvectors that correspond to eigenvalue $-1$ are listed first, and write the spectral decomposition 
\begin{equation*}
T=-\sum_{j=1}^{l_0}\varphi_j\otimes\varphi_j +\sum_{j> l_0}\lambda_j\varphi_j\otimes \varphi_j.
\end{equation*}
Here we used the compactness of the operator $T$ to deduce that $l_0$ may be taken finite, and we also see that there is $\varepsilon>0$ such that  $\lambda_j>\varepsilon-1$ for all $j>l_0.$
Assume then that $u\in H$ is normalized and satisfies the stated condition so that $u=\sum_{j> l_0}u_j\varphi_j$ with $\sum_{j> l_0}u_j^2=1$. If $x\in H$ is arbitrary, $x=\sum_{j=1}^\infty x_j\varphi_j$, we may compute
\begin{eqnarray*}
\langle x,(\id +T-\varepsilon u\otimes u)x\rangle &=&\sum_{j>l_0}(1+\lambda_j)x_j^2-\varepsilon(\sum_{j>l_0}u_jx_j)^2\\
&\geq &\varepsilon\sum_{j>l_0}x_j^2-\varepsilon (\sum_{j>l_0}x_j^2)(\sum_{j>l_0}u_j^2)=0.
\end{eqnarray*}

In turn, the necessity of the condition is seen by observing that if $b=\sum_{j=1}^\infty b_j\varphi_j$, with $b_{j_0}\neq 0$ for some $1\leq j_0\leq l_0$, we obtain $\langle \varphi_{j_0},(\id +T-\varepsilon b\otimes b)\varphi_{j_0}\rangle=-b_{j_0}^2<0$.
\end{proof}

The following example shows that it is not always enough to consider perturbations with one base element. In the context of Proposition~\ref{prop:RankTwoAssumption} this means  that choosing $f_1$ to be simply a trigonometric monomial will not always suffice.
\begin{exem}\label{co:yksi}
Let $(e_j)_{j\geq 1}$ be an orthonormal basis of $H$, and set  $a\coloneqq\sum_{j= 1}^\infty2^{-j/2}e_j$. Then $\id-a\otimes a\geq 0$. However,  for all $n\geq 1$ and $\varepsilon >0$ we have $\id-a\otimes a-\varepsilon e_n\otimes e_n\not\geq 0$.
\end{exem}

In our next auxiliary result, we generalize Lemma \ref{le:hilbert} to cover operators of form $C+A$, where the compact operator $C$ is assumed to dominate $A$ in a suitable sense (i.e. $A$ is compact with respect to $C$, see condition \eqref{eq:037} below). 
\begin{lemm}\label{le:hilbert2} 
Assume that $C,A$ are compact (symmetric) and self-adjoint operators so that both $C\geq 0$ and $C+A\geq 0$. Assume also that the kernel $M\coloneqq\kernel(C)$ is finite-dimensional, and denote by $P$ the orthogonal projection onto $M$. We write $C_1\coloneqq C+P$ and assume that 
\begin{equation}\label{eq:037}
C_1^{-1/2}AC_1^{-1/2} \qquad\textrm{extends to a compact operator on}~H.
\end{equation}
Then, there are elements $v_1,\dots, v_{l_0}\in H$ such that  if $u\in H$ satisfies $\langle u,v_j\rangle=0$ for $j=1,\dots, l_0,$ then 
\begin{equation*}
C+A- \varepsilon C_1^{1/2}u\otimes C_1^{1/2}u\geq 0
\end{equation*}
for small enough $\varepsilon >0.$
\end{lemm}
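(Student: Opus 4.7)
The plan is to factor $C + A = C_1^{1/2} S C_1^{1/2}$ with $S$ a bounded self-adjoint operator that is a compact perturbation of the identity, and then reduce to the previously established Lemma~\ref{le:hilbert} applied to $S - \id$. As a preliminary step I would check that $C_1 = C + P$ has trivial kernel: for $x = x' + x''$ with $x' \in M = \kernel(C)$ and $x'' \in M^\perp$, the identities $CP = PC = 0$ show that $M$ and $M^\perp$ are both $C$-invariant, so $C_1 x = x' + C x'' = 0$ forces $x' = 0$ and $C x'' = 0$, whence $x'' = 0$. Consequently $C_1^{1/2}$ is bounded, self-adjoint, injective, with dense range, and $C_1^{-1/2}$ is densely defined and (in general) unbounded.

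Next, write $T$ for the compact extension of $C_1^{-1/2} A C_1^{-1/2}$ granted by \eqref{eq:037} and set $S \coloneqq \id - P + T$. I would then verify the factorization
\begin{equation*}
C + A \;=\; C_1^{1/2} S C_1^{1/2}.
\end{equation*}
Since $C_1 P = P C_1 = P$, functional calculus yields $[C_1^{1/2}, P] = 0$ and $C_1^{1/2} P C_1^{1/2} = P C_1 = P$. For the remaining piece $C_1^{1/2} T C_1^{1/2} = A$, one uses that the range of $C_1^{1/2}$ coincides with the domain of $C_1^{-1/2}$ (an immediate consequence of the spectral theorem for $C_1$): for every $x \in H$ and $y = C_1^{1/2} x$ we have $C_1^{-1/2} y = x$, hence $\langle y, T y \rangle = \langle x, A x \rangle$ on the dense range of $C_1^{1/2}$, and polarization combined with boundedness of $T$ promotes this to an operator identity. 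Combining, $\langle C_1^{1/2} x, S C_1^{1/2} x \rangle = \langle x, (C + A) x \rangle \geq 0$ for all $x \in H$, and density of the range of $C_1^{1/2}$ together with boundedness of $S$ forces $S \geq 0$ on all of $H$.

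Now $S - \id = -P + T$ is compact self-adjoint (finite rank plus compact) and satisfies $\id + (S - \id) = S \geq 0$. Applying Lemma~\ref{le:hilbert} to $S - \id$ produces vectors $\varphi_1, \dots, \varphi_{l_0}$ (namely a basis of $\kernel S$) such that, whenever $\langle u, \varphi_j \rangle = 0$ for all $j$, there exists $\varepsilon > 0$ with $(S - \id) - \varepsilon\, u \otimes u \geq 0$, which in particular gives $S - \varepsilon\, u \otimes u \geq \id \geq 0$. Conjugating by $C_1^{1/2}$ then yields
\begin{equation*}
C + A - \varepsilon\, C_1^{1/2} u \otimes C_1^{1/2} u \;=\; C_1^{1/2} \bigl( S - \varepsilon\, u \otimes u \bigr) C_1^{1/2} \;\geq\; 0,
\end{equation*}
so the choice $v_j \coloneqq \varphi_j$ meets the requirements.

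The main technical obstacle will be the rigorous justification of the factorization $C + A = C_1^{1/2} S C_1^{1/2}$ in the presence of the unbounded operator $C_1^{-1/2}$; everything else is either a direct computation or a direct invocation of Lemma~\ref{le:hilbert}. This step becomes manageable once one identifies the range of $C_1^{1/2}$ with the domain of $C_1^{-1/2}$ and exploits polarization together with boundedness of the extended $T$ to pass from the quadratic-form identity on a dense subspace to an identity of bounded operators.
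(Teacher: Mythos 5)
Your argument is correct and follows the same route as the paper: conjugate by $C_1^{-1/2}$, observe that $C+A=C_1^{1/2}\bigl(\id-P+T\bigr)C_1^{1/2}$ with $\id-P+T\geq 0$ and $-P+T$ compact, apply Lemma~\ref{le:hilbert}, and conjugate back by $C_1^{1/2}$. The paper compresses this to one line (``replace $C$ by $C_1$ and $A$ by $A-P$, then apply the previous lemma''); you have supplied the details on the kernel of $C_1$, the role of the compact extension, and the quadratic-form justification of the factorization, all of which the paper leaves implicit.
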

\begin{proof}
We may actually assume that $M=\{0\}$ by replacing $C$ by $C_1$ and $A$ by $A-P$. The claim then follows easily by applying the previous lemma with $T=C_1^{-1/2}AC_1^{-1/2}$.
\end{proof}

\begin{coro}\label{co:kaksi} In the situation of Lemma \ref{le:hilbert2}, let $\{e_j\}$ denote an orthonormal basis of $H$ consisting of eigenvectors of $C_1$. Then for any $E\subset \N$ with $|E|\geq l_0+1$ there is a nontrivial linear combination $b=\sum_{j\in E}a_je_j$ and $\varepsilon>0$ so that
\begin{equation*}
C+A-\varepsilon b\otimes b \geq 0.
\end{equation*}
\end{coro}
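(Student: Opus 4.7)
The plan is to reduce directly to Lemma~\ref{le:hilbert2}, which guarantees that whenever $u\in H$ satisfies the $l_0$ orthogonality conditions $\langle u,v_j\rangle=0$ for $j=1,\dots,l_0$, the vector $b:=C_1^{1/2}u$ furnishes the positivity $C+A-\varepsilon\,b\otimes b\geq 0$ for small $\varepsilon>0$. It therefore suffices to produce such a $u$ whose image $C_1^{1/2}u$ is a nontrivial element of $\mathrm{span}\{e_j:j\in E\}$.

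Two features of $C_1^{1/2}$ make this straightforward. First, since $\ker(C)$ is finite-dimensional and $C\geq 0$ is compact, the eigenvalues $\lambda_j$ of $C_1=C+P$ in the basis $\{e_j\}$ are strictly positive: on $\ker(C)$ the eigenvalue is $1$, while on $\ker(C)^{\perp}$ they agree with the positive eigenvalues of $C$. In particular $C_1^{1/2}$ is injective. Second, $C_1^{1/2}$ acts diagonally in this basis, $C_1^{1/2}e_j=\sqrt{\lambda_j}\,e_j$, so it stabilizes every coordinate subspace $\mathrm{span}\{e_j:j\in F\}$ and carries nonzero vectors there to nonzero vectors of the same form.

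To close, fix any finite subset $E'\subseteq E$ of cardinality $l_0+1$ and seek $u=\sum_{j\in E'}u_je_j$ in the $(l_0+1)$-dimensional subspace $\mathrm{span}\{e_j:j\in E'\}$. The orthogonality constraints $\langle u,v_k\rangle=\sum_{j\in E'}u_j\langle e_j,v_k\rangle=0$ for $k=1,\dots,l_0$ constitute a homogeneous system of $l_0$ linear equations in $l_0+1$ unknowns, so admit a nontrivial solution $u\neq 0$. The vector $b:=C_1^{1/2}u=\sum_{j\in E'}\sqrt{\lambda_j}\,u_je_j$ is then the desired nontrivial linear combination, and Lemma~\ref{le:hilbert2} yields the conclusion. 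The only substantive observation is the correspondence $u\leftrightarrow C_1^{1/2}u$; once that is noted, everything reduces to elementary linear algebra, and I do not foresee any genuine obstacle.
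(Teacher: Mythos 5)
Your proof is correct and takes essentially the same approach as the paper. The paper's entire proof is the one-line observation that if $u$ is a linear combination of eigenvectors of $C_1$ then so is $b=C_1^{1/2}u$; you spell out the same idea in full, adding the dimension count ($l_0$ linear constraints on an $(l_0+1)$-dimensional coordinate subspace) and the injectivity of $C_1^{1/2}$ that the paper leaves implicit.
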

\begin{proof}
Simply note that if $u$ is a linear combination of eigenvectors of $C_1$, then $b=C_1^{1/2}u$ is also.
\end{proof}

We then apply the previous observations to our situation.
\begin{lemm}
Assume that $C$ is the covariance of the canonical field on $\T$ and let $A\in W^{2,2}(\T^2)$ be a symmetric (real-valued) function such that 
$C+A\geq 0$. We shall also denote by $C, A$  the corresponding operators on $L^2(\T)$. Then there are nontrivial finite trigonometric polynomials $f_1,f_2$ such that
\begin{equation*}\label{eq:trig1}
C+A-f_1\otimes f_1-f_2\otimes f_2\geq 0
\end{equation*}
and
\begin{equation*}\label{eq:trig2}
\inf_{\theta\in [0,2\pi)}\left(|f_1(\theta)|^2+|f_2(\theta)|^2\right)>0.
\end{equation*}
\end{lemm}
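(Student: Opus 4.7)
The plan is to apply Corollary \ref{co:kaksi} twice in succession — first to the pair $(C, A)$ to produce $f_1$, and then to $(C, A - f_1 \otimes f_1)$ to produce $f_2$ — while exploiting the freedom in the choice of the index set $E$ to arrange that $f_1$ and $f_2$ have no common zeros. First, one checks that the hypotheses of Lemma \ref{le:hilbert2} are met: $C$ is trace-class positive compact with orthonormal eigenbasis $\{1, \cos(n\theta), \sin(n\theta)\}_{n\geq 1}$ and eigenvalues $0$ (with multiplicity one) and $1/(2n)$ (each with multiplicity two), so $\kernel(C)$ is the one-dimensional space of constants. The operator associated to $A \in W^{2,2}(\T^2)$ has Fourier coefficients satisfying $\sum_{n,m}(1+n^2+m^2)^2|\hat{A}(n,m)|^2 < \infty$, so $C_1^{-1/2}AC_1^{-1/2}$ has matrix entries essentially $\sqrt{|nm|}\,\hat{A}(n,m)$ and Hilbert--Schmidt norm bounded by $\sum |nm|\,|\hat{A}(n,m)|^2 < \infty$; in particular it extends to a compact operator.

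Now apply Corollary \ref{co:kaksi} with any finite $E_1 \subset \N$ of cardinality $l_0 + 1$ (where $l_0$ is the integer furnished by Lemma \ref{le:hilbert2}) to extract a nontrivial finite trigonometric polynomial $f_1$ with $C + A - f_1 \otimes f_1 \geq 0$. Being a nontrivial trigonometric polynomial, $f_1$ has a finite zero set $Z(f_1) = \{\theta_1,\dots,\theta_k\} \subset \T$. The kernel $A' := A - f_1 \otimes f_1$ lies in $W^{2,2}(\T^2)$ since $f_1\otimes f_1$ is smooth, and $C + A' \geq 0$; Lemma \ref{le:hilbert2} therefore applies anew and produces some codim-$l_0'$ valid subspace $V' = \{u\in L^2(\T): \langle u, v'_j\rangle = 0\text{ for } j = 1,\dots, l_0'\}$, inside which each admissible perturbation takes the form $b \otimes b$ with $b = C_1^{1/2}u$.

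The heart of the argument is picking $u \in V'$, a finite trigonometric polynomial, such that $b = C_1^{1/2}u$ does not vanish at any $\theta_i \in Z(f_1)$. For this, consider, for $N$ large, the linear map
\[
\Psi_N : U_N \to \R^{l_0' + k}, \qquad u \mapsto \bigl(\langle u, v'_1\rangle,\dots,\langle u, v'_{l_0'}\rangle,\,(C_1^{1/2}u)(\theta_1),\dots,(C_1^{1/2}u)(\theta_k)\bigr),
\]
where $U_N$ denotes trigonometric polynomials of degree at most $N$. Surjectivity of $\Psi_N$ for some $N$ allows us to solve $\Psi_N(u) = (0,\dots,0,1,\dots,1)$, yielding $u \in V'$ with $b(\theta_i) \neq 0$ for every $i$. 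Corollary \ref{co:kaksi} then delivers $\varepsilon_2 > 0$ with $C + A' - \varepsilon_2\,b\otimes b \geq 0$; setting $f_2 := \sqrt{\varepsilon_2}\,b$ makes $|f_1(\theta)|^2 + |f_2(\theta)|^2 > 0$ at every $\theta \in \T$, whence the infimum is positive by continuity and compactness of $\T$.

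The main obstacle is the surjectivity of $\Psi_N$ for $N$ large, equivalently the linear independence of the $l_0' + k$ functionals it packages. The first $l_0'$ are represented by the $L^2$ functions $v'_j$, while the last $k$ functionals are represented (via $(C_1^{1/2}u)(\theta_i) = \langle u, C_1^{1/2}\delta_{\theta_i}\rangle$) by the distributions $C_1^{1/2}\delta_{\theta_i}$, whose Fourier coefficients decay only as $1/\sqrt{|n|}$ and which therefore lie in $W^{-\varepsilon,2}(\T) \setminus L^2(\T)$ for every $\varepsilon > 0$. Any nontrivial linear combination of the $C_1^{1/2}\delta_{\theta_i}$'s retains a genuine $|\theta-\theta_i|^{-1/2}$-type singularity at one of the $\theta_i$ and therefore cannot cancel against any $L^2$ element; combined with the linear independence of the $v'_j$'s in $L^2$, this gives linear independence of the full family as distributions on $C^\infty(\T)$. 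Since the bad kernels of the maps $\Psi_N$ form a nested decreasing sequence of subspaces of $\R^{l_0'+k}$ whose intersection is $\{0\}$, surjectivity of $\Psi_N$ follows for all sufficiently large $N$, completing the plan.
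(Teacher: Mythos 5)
Your proof is correct, and while it has the same skeleton (Lemma~\ref{le:hilbert2} plus a linear-algebraic non\-degeneracy argument based on a distributional functional lying outside $L^2(\T)$), the structure is genuinely different from the paper's in two respects. First, the paper applies the constraint-deriving lemma \emph{once} to $(C,A)$; both $f_1$ and $f_2$ are chosen from the same admissible cone $\{f:\langle f,\psi_j\rangle=0\}$ (where $\psi_j=C_1^{-1/2}\varphi_j$), and the two rank-one perturbations are combined by averaging, using that $C+A-\varepsilon_1 f_1\otimes f_1\geq 0$ and $C+A-\varepsilon_2 f_2\otimes f_2\geq 0$ imply positivity of the average. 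You instead apply the lemma twice, sequentially, the second time to $(C,A-f_1\otimes f_1)$, which also works but forces you to re-verify the compactness hypothesis \eqref{eq:037} for the modified kernel (harmless, since $f_1\otimes f_1$ is smooth). Second, the zero-avoidance step: the paper reduces, by a perturbation-and-induction argument, to excluding a single point $\theta_0$ from the zero set of $f_2$, and argues that the extra linear constraint $\langle f_2,g_0\rangle=0$ (with $g_0$ the Fourier sequence of $\delta_{\theta_0}$) is not eventually implied by the $\psi_j$-constraints because $g_0\notin\ell^2$. You impose all $k$ point constraints simultaneously and deduce surjectivity of $\Psi_N$ for large $N$ from the fact that the combined family $\{v'_j\}\cup\{C_1^{1/2}\delta_{\theta_i}\}$ is linearly independent over trigonometric polynomials — the same ``$L^2$ versus non-$L^2$'' dichotomy, but with the square root of $C_1$ on the delta side rather than the $\psi_j$ side. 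One small bonus of your formulation: you keep the orthogonality constraints on $u$ (so the functionals are the $L^2$ elements $v'_j$ themselves) and therefore never need $\psi_j=C_1^{-1/2}\varphi_j\in L^2$, which is exactly the step where the paper invokes $A\in W^{2,2}$ (via boundedness of $C_1^{-1}AC_1^{-1}$); your argument would go through under the weaker assumption $A\in W^{1,2}(\T^2)$, which is all that is needed for the Hilbert--Schmidt property of $C_1^{-1/2}AC_1^{-1/2}$. (Two minor slips: the citation at the end of your third paragraph should be Lemma~\ref{le:hilbert2} rather than Corollary~\ref{co:kaksi}, since the $u$ you construct is not supported on a prescribed index set $E$; and the claim that any nontrivial combination $\sum d_i C_1^{1/2}\delta_{\theta_i}$ escapes $L^2$ deserves a line — e.g.\ via the divergence of $\sum_n n^{-1}\big|\sum_i d_i e^{-in\theta_i}\big|^2$, whose partial sums grow like $\log N$ by the Bohr mean of the almost-periodic factor — but the heuristic is sound.)
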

\begin{proof}
Let us denote by $(e_j)_{j\geq 0}$ the standard trigonometric basis of $L^2$, where $e_0$ is constant, and $e_{2j-1}, e_{2j}$ are the sine and cosine functions with frequency $j$ for $j\geq 1$. In this basis $C, C_1$ are the diagonal operators
\begin{eqnarray*}
C&=&{\rm diag\,}(0,1,1,1/2,1/2,1/3,1/3,\dots ), \\ C_1&=&{\rm diag\,}(1,1,1,1/2,1/2,1/3,1/3,\dots )\; \eqqcolon \;{\rm diag\,}(m),
\end{eqnarray*}
and $P=e_0\otimes e_0$ is one-dimensional. Especially, $C_1$ is the Fourier-multiplier $T_m$, where $m$ is the above defined vector, and we see immediately that $C:W^{s,2}(T)\to W^{s+1,2}(T)$ is an isomorphism. The powers  $C^\alpha$ are naturally defined. Lemma \ref{le:hilbert2} applies as soon as we verify that \eqref{eq:037} is satisfied. This follows e.g. by checking that $C_1^{-1/2}AC_1^{-1/2}$ is Hilbert-Schmidt, which is clearly equivalent to $(1-\Delta_x)^{1/4}(1-\Delta_y)^{1/4}A\in L^2(\T^2)$. Writing this in terms of the Fourier coefficients and using Cauchy-Schwarz, we see that this is implied by the fact that $A\in W^{1,2}(\T^2).$

Let us denote by $\varphi_1,\dots,\varphi_l$ an orthonormal basis of the eigenspace correponding to eigenvalue $-1$ of the operator $C_1^{-1/2}AC_1^{-1/2}$. Then the condition for a trigonometric polynomial $f$ ensuring that $C+A-\varepsilon f\otimes f\geq 0$ for some $\varepsilon >0$ can be written in the form
\begin{equation*}
\langle f, C_1^{-1/2}\varphi_j\rangle=0,\qquad j=1,\dots l,
\end{equation*}
where the computation is a priori formal. However, the element $C_1^{-1/2}\varphi_j$ belongs to $L^2(\T)$ since we check as above that $C_1^{-1}AC_1^{-1}$ extends to a bounded operator on $L^2$ (since $A\in W^{2,2}(\T^2)$), and hence we obtain
\begin{equation*}
C_1^{-1/2}\varphi_j=C_1^{-1/2}\big(-C_1^{-1/2}AC_1^{-1/2}\varphi_j\big)=-C_1^{-1}AC_1^{-1}(C_1^{1/2}\varphi_j)\in L^2(\T).
\end{equation*}
Let us denote $C_1^{-1/2}\varphi_j\eqqcolon\psi_j$ for $j=1,\dots,l$. This means that $C+A-\varepsilon f\otimes f\geq 0$ for some $\varepsilon >0$ as soon as the trigonometric polynomial $f$ satisfies
\begin{equation}\label{eq:trig4}
\langle f, \psi_j\rangle=0,\qquad j=1,\dots l.
\end{equation}

We still need to find two trigonometric polynomials $f_1$ and $f_2$, such that they both satisfy~\eqref{eq:trig4} and without common zeroes. For this purpose, we first fix $f_1$ to be any non-trivial trigonometric polynomial satisfying~\eqref{eq:trig4}. This can obviously be done by choosing the degree of $f_1$ to be at least $l+1$. The polynomial $f_1$ has only finitely many zeroes. Using a perturbative argument and induction on the number of zeroes, it clearly suffices to find $f_2$ such that it does vanish in a given point $\theta_0$ and it satisfies~\eqref{eq:trig4}. For any sequence $y=(y_n)_{n\geq 0}$ denote by $P_Ny$ the finite sequence $P_Ny\coloneqq(y_n)_{ 0\leq n\leq N}$. A trigonometric polynomial $f_2=\sum_{j=0}^Nc_je_j$ vanishes at $\theta_0$ exactly when $\langle f_2,P_Ng_0\rangle=0$, where 
\begin{equation*}
g_0\coloneqq(1,\sin(\theta_0), \cos(\theta_0), \sin(2\theta_0),\dots).
\end{equation*}
Thus the choice of $f_2$ as a trigonometric polynomial is not possible if and only if
\begin{equation*}
P_Ng_0\in {\rm span\,}\{P_N\psi_1,\dots, P_N\psi_l\}\qquad \textrm{ for all} \;\;N.
\end{equation*}
However, then an elementary argument  implies that $g_0\in {\rm span\,}\{\psi_1,\dots, \psi_l\}$, which is impossible since $g_0\notin \ell^2.$ Now the positivity of both $C+A-\varepsilon_1f_1\otimes f_1$ and $C+A-\varepsilon_2f_2\otimes f_2$ implies 
\begin{equation*}
C+A-\frac{1}{2}\varepsilon_1f_1\otimes f_1-\frac{1}{2}\varepsilon_2f_2\otimes f_2\geq 0,
\end{equation*}
and this completes the proof.
\end{proof}

\section{Proof of the upper bound on the density of zeroes}
In this section we verify the sufficient condition in Corollary~\ref{coro:FinalSufficient}, which implies the first part of Theorem~\ref{thm2}, and it also automatically implies the general form of Theorem~\ref{thm1}. Since
\begin{equation}\label{eq:s2}
	\mathbb{E}\left[\widetilde{x}(z)\wedge 1\right]\lesssim\mathbb{E}\left[\widetilde{x}(z)^{p}\right],
\end{equation}
for all $p\in(0,1)$, it suffices to establish a $p$-moment bound on $\widetilde{x}(z)$ with $p<1$. We only need the fact that $\widetilde{x}(z)$ is generated by a general log-correlated Gaussian field in the rest of the proof, so some notations are deliberately not strict (the proofs work for any Gaussian multiplicative chaos measures).

\begin{lemm}\label{lem:MomentBound2}
With the assumptions of Theorem~\ref{thm1} we have for any $p\in (0,1)$ the estimate
\begin{equation*}
\mathbb{E}\left[\widetilde{x}(z)^{p}\right] \lesssim (1-|z|)^{p(1-p)\gamma^2/2}.
\end{equation*}
\end{lemm}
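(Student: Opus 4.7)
My plan is to exploit the concentration of the Poisson kernel near $\arg(z)$ together with the exact scaling relation (Fact~\ref{fact:ScalingPurelog}) of the chaos measure. Set $r \coloneqq 1 - |z|$ and $\theta_0 \coloneqq \arg(z)$. We may assume $r \leq 1/2$, since in the opposite regime $(1-|z|)^{p(1-p)\gamma^2/2}$ is bounded away from zero while $\widetilde{x}(z) \lesssim \widetilde{\mu}(\T)$, so the bound reduces to the finiteness $\mathbb{E}[\widetilde{\mu}(\T)^p] < \infty$ from Fact~\ref{fact:Positivemoment}. Next, chop $\T$ into the dyadic arcs
\begin{equation*}
I_0 \coloneqq \{|\theta - \theta_0| \leq r\}, \qquad I_k \coloneqq \{2^{k-1}r < |\theta - \theta_0| \leq 2^k r\}, \quad 1 \leq k \leq K,
\end{equation*}
with $K = \lceil \log_2(\pi/r) \rceil$. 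On $I_k$ the Poisson kernel satisfies $P_z(\theta) \leq C \cdot 2^{-2k}/r$, so
\begin{equation*}
\widetilde{x}(z) \;\leq\; \frac{C}{r}\sum_{k=0}^K 2^{-2k}\,\widetilde{\mu}(I_k),
\end{equation*}
and using the elementary subadditivity $(a+b)^p \leq a^p + b^p$ for $p \in (0,1)$,
\begin{equation*}
\mathbb{E}\bigl[\widetilde{x}(z)^p\bigr] \;\leq\; \frac{C^p}{r^p} \sum_{k=0}^K 2^{-2kp}\,\mathbb{E}\bigl[\widetilde{\mu}(I_k)^p\bigr].
\end{equation*}

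Then I would apply the Kahane comparison principle of Remark~\ref{re:comp} together with the exact scaling of Fact~\ref{fact:ScalingPurelog} to control the moment on each arc. For an arc of length $\ell \leq 1$ this yields
\begin{equation*}
\mathbb{E}\bigl[\widetilde{\mu}(I_k)^p\bigr] \;\lesssim\; \ell^{\,p(1+\gamma^2/2) - p^2\gamma^2/2},
\end{equation*}
the implicit constant being finite by Fact~\ref{fact:Positivemoment} since $p < 1 < 2/\gamma^2$. Substituting $\ell \asymp 2^k r$ and writing $\alpha \coloneqq p(1+\gamma^2/2) - p^2\gamma^2/2$, the preceding display becomes
\begin{equation*}
\mathbb{E}\bigl[\widetilde{x}(z)^p\bigr] \;\lesssim\; r^{\alpha - p} \sum_{k=0}^K 2^{k(\alpha - 2p)}.
\end{equation*}

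Two elementary algebraic checks then close the argument. First, $\alpha - p = p(1-p)\gamma^2/2$, which is precisely the target exponent on $r$. Second, $\alpha - 2p = -p\bigl(1 - \gamma^2(1-p)/2\bigr)$, and this is strictly negative whenever $\gamma^2(1-p) < 2$; our hypothesis $\gamma^2 \leq 2$ together with $p \in (0,1)$ guarantees this uniformly, so the geometric series sums to a finite constant depending only on $p$ and $\gamma$. This completes the bound. The only point requiring care is the bookkeeping in applying Remark~\ref{re:comp} uniformly across arcs of varying length (recall that $\widetilde{X}$ differs from the canonical field by $g - f_1\otimes f_1 - f_2\otimes f_2$, which is still a bounded continuous perturbation, so the comparison applies). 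For the critical endpoint $\gamma=\sqrt{2}$ one uses the corresponding exact scaling relation for the derivative-martingale normalization, but the combinatorics of the above computation are identical and the same exponent $p(1-p)\gamma^2/2$ emerges.
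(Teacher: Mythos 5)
Your argument is correct and follows essentially the same route as the paper: a dyadic decomposition of the Poisson kernel around $\arg(z)$, the $p$-moment estimate $\mathbb{E}[\widetilde{\mu}(\text{arc of length }\ell)^p]\lesssim \ell^{\,p(1+\gamma^2(1-p)/2)}$ obtained from exact scaling (Fact~\ref{fact:ScalingPurelog}) combined with Kahane comparison (Remark~\ref{re:comp}), subadditivity of $t\mapsto t^p$, and a geometric series that converges because $\gamma^2(1-p)<2$. The only cosmetic difference is that you use disjoint annular arcs and arc masses, whereas the paper phrases the same bound through nested averages $A_{\widetilde\mu,2^{j-k}}$.
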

\begin{proof}
We denote by  $A_{\widetilde{\mu},\varepsilon}(\theta)$ stands for the average of the Gaussian multiplicative chaos measure $d\widetilde{\mu}$ over the interval $[\theta-\varepsilon,\theta+\varepsilon]$. Let us consider our Gaussian field $\widetilde X$ on the interval $\theta\in (-1/2,1/2)$. Let  $X_1$ be an exactly scaling field on $(-1/2,1/2)$. At this point we refer to Fact~\ref{fact:ScalingPurelog} in Section~\ref{sec:chaos} for the properties of the exactly scaling field. For the chaos $\mu_1$ constructed from the exactly scaling field $X_1$ it holds that 
\begin{equation*}
\mu_1(-\varepsilon,\varepsilon)\sim 2\varepsilon^{1+\gamma^2/2}\exp\big(\gamma N(0,\log{1/\varepsilon})\big)\mu'_1(-1/2,1/2),
\end{equation*}
where $\mu'_1$ is a copy of $\mu_1$ and the normal variable $N(0,\log{1/\varepsilon})$ is independent of $\mu'_1$. This distributional equality yields immediately that $\mathbb{E}\left[(\mu_1(-\varepsilon,\varepsilon))^p\right]=c_p\varepsilon^{p(1+\gamma^2(1-p)/2)}$ (use Fact~\ref{fact:Positivemoment} in Section~\ref{sec:chaos}). By the reasoning in Remark~\ref{re:comp} we deduce the corresponding upper bound for $\widetilde{\mu}$, i.e.
\begin{equation}\label{eq:q}
\mathbb{E}\left[(\widetilde \mu(-\varepsilon,\varepsilon))^p\right]\lesssim \varepsilon^{p(1+\gamma^2(1-p)/2)}.
\end{equation}

To obtain the statement of the lemma we may assume (e.g. by Harnack) that $z=r\in (0,1)$, where $1-r=2^{-k}$ for an integer $k\geq 2$. Let us denote $q:=p(1+\gamma^2(1-p)/2).$ By a standard property of the Poisson extension (that can be read directly from the kernel) we may upper  bound $\widetilde x(r)$ by the convex combination 
\begin{equation}\label{eq:convex}
\widetilde x(r)\lesssim 2^{-k}\widetilde\mu(\T)+\sum_{j=0}^k2^{-j}A_{\widetilde \mu,2^{j-k}}(\theta).
\end{equation}
In view of this and \eqref{eq:q} we may estimate (use $q<2p$ for $p\in(0,1)$)
\begin{equation*}
\mathbb{E}\left[\widetilde x(r)^p\right] \lesssim 2^{-kp}\mathbb{E}\left[\widetilde\mu(\T)^p\right] +\sum_{j=0}^k2^{-jp}2^{(j-k)q}2^{(k-j)p}\lesssim 2^{k(p-q)}
\lesssim  (1-r)^{q-p},
\end{equation*}
which yields the claim by the definition of $q$.
\end{proof}

\begin{coro}\label{coro:FinalBound}
For all $\varepsilon>0$ and $z\in\mathbb{D}$,
\begin{equation*}
	\mathbb{E}\left[\widetilde{x}(z)\wedge 1\right]\lesssim (1-|z|)^{\frac{\gamma^2}{8}-\delta},
\end{equation*}
where $\delta>0$ is arbitrary.
\end{coro}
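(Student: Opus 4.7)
The plan is very short: this is just a matter of optimizing the free parameter $p$ in the moment bound from Lemma~\ref{lem:MomentBound2}. Combining the elementary inequality $t \wedge 1 \leq t^p$ valid for all $t \geq 0$ and $p \in (0,1)$ with \eqref{eq:s2} gives
\begin{equation*}
\mathbb{E}\bigl[\widetilde{x}(z) \wedge 1\bigr] \;\leq\; \mathbb{E}\bigl[\widetilde{x}(z)^p\bigr] \;\lesssim\; (1-|z|)^{p(1-p)\gamma^2/2},
\end{equation*}
and the remaining task is to choose $p \in (0,1)$ that maximizes the exponent $p(1-p)\gamma^2/2$.

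The function $p \mapsto p(1-p)$ attains its maximum value $1/4$ at the interior point $p = 1/2$, giving an exponent of $\gamma^2/8$. Since this choice lies strictly inside $(0,1)$, Lemma~\ref{lem:MomentBound2} applies directly and yields
\begin{equation*}
\mathbb{E}\bigl[\widetilde{x}(z) \wedge 1\bigr] \;\lesssim\; (1-|z|)^{\gamma^2/8}.
\end{equation*}
Because $(1-|z|) \leq 1$ for $z \in \mathbb{D}$, raising to a smaller exponent only makes the right-hand side larger, so for any $\delta > 0$ we obtain
\begin{equation*}
(1-|z|)^{\gamma^2/8} \;\leq\; (1-|z|)^{\gamma^2/8 - \delta},
\end{equation*}
which proves the corollary. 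There is no real obstacle here: the whole work has already been done in Lemma~\ref{lem:MomentBound2}, and the $\delta > 0$ slack in the statement is not even needed (one could state the bound cleanly with exponent $\gamma^2/8$), but it matches the form required by Corollary~\ref{coro:FinalSufficient} to conclude the first part of Theorem~\ref{thm2}.
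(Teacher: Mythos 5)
Your proof is correct and is essentially identical to the paper's (the paper's proof of Corollary~\ref{coro:FinalBound} is the one-liner ``choose $p=1/2$ in Lemma~\ref{lem:MomentBound2}''). Your added observation that the $\delta$ slack is superfluous here is accurate; the statement keeps it only to match the hypothesis of Corollary~\ref{coro:FinalSufficient}.
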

\begin{proof} This follows simply by choosing $p=1/2$ in  Lemma~\ref{lem:MomentBound2}. 
\end{proof}
This completes the proof for the first part of Theorem~\ref{thm2} in view of~\eqref{eq:s2}.

\medskip

We end this section by presenting an alternative  to Corollary~\ref{coro:FinalBound} that is based on the following known result:
\begin{lemm}\label{lem:MomentBound}
Let $Y$ be a log-correlated Gaussian field on the unit circle $\T$ and $\mu_{Y}$ the associated Gaussian multiplicative chaos measure with parameter $\gamma\in(0,\sqrt{2}]$. Then for all $0<p<1$ the quantity
\begin{equation*}
	\sup_{z\in \T}\mathbb{E}\left[\left(\int_{\T}\frac{1}{|z-e^{i\theta}|^{s}}d\mu_Y(\theta)\right)^{p}\right]
\end{equation*}
is finite if and only if $s<s(p)=1+\frac{\gamma^2}{2}(1-p)$.
\end{lemm}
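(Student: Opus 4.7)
The plan is to combine a dyadic decomposition around the singularity with the exact scaling of Fact~\ref{fact:ScalingPurelog}, after first reducing to the exact-scaling field via Kahane's comparison (Remark~\ref{re:comp}). Indeed, Kahane yields that $\mathbb{E}\bigl[(\int_\T |z-e^{i\theta}|^{-s}d\mu_Y)^p\bigr]$ is controlled, up to a constant depending on the correction $g$, by the analogous quantity for the canonical rotation-invariant field; fixing $z=1$ is then cost-free, and since $|1-e^{i\theta}|\asymp|\theta|$ on a fixed arc around $\theta=0$ and the integral over the complementary arc is uniformly bounded, the whole problem reduces to estimating
\begin{equation*}
J_s\coloneqq\int_{[-1/2,1/2]} |y|^{-s}\,d\mu_Z(y),
\end{equation*}
where $Z$ is the exact-scaling log-correlated field of Fact~\ref{fact:ScalingPurelog}.

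For the sufficiency $s<s(p)$, I would split $J_s$ dyadically, $J_s\leq \sum_{k\geq 0} 2^{(k+1)s}\mu_Z(A_k)$, with $A_k\coloneqq\{y:2^{-k-1}\leq |y|\leq 2^{-k}\}$. Using the subadditivity $(a+b)^p\leq a^p+b^p$ valid for $p\in(0,1)$, I take expectations and plug in the scaling moment estimate $\mathbb{E}[\mu_Z(A_k)^p]\asymp 2^{-k\xi(p)}$ with $\xi(p)\coloneqq p+\tfrac{\gamma^2}{2}p(1-p)$, an immediate consequence of Facts~\ref{fact:ScalingPurelog} and \ref{fact:Positivemoment} (already used in the proof of Lemma~\ref{lem:MomentBound2}). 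This yields the geometric sum
\begin{equation*}
\mathbb{E}[J_s^p]\lesssim\sum_{k\geq 0}2^{k(sp-\xi(p))},
\end{equation*}
which converges precisely when $sp<\xi(p)$, i.e.\ $s<s(p)$.

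For the necessity $s\geq s(p)$, I would apply exact scaling directly to the weighted integral. Setting $J_s^{(r)}\coloneqq\int_{[-r/2,r/2]}|y|^{-s}d\mu_Z(y)$ for $r\in(0,1]$, the change of variable $y=r\tilde y$ combined with Fact~\ref{fact:ScalingPurelog} gives the distributional identity
\begin{equation*}
J_s^{(r)}\stackrel{d}{=} r^{1+\gamma^2/2-s}\,e^{\gamma\sqrt{-\log r}\,N}\,J_s^{(1)},
\end{equation*}
with $N$ a standard Gaussian independent of $J_s^{(1)}$. Taking $p$-th moments and computing the Gaussian expectation gives $\mathbb{E}[(J_s^{(r)})^p]=r^{p(s(p)-s)}\mathbb{E}[J_s^p]$. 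Since $J_s^{(r)}\leq J_s$ pointwise, the left side is at most $\mathbb{E}[J_s^p]$. For $s>s(p)$ the right side diverges as $r\downarrow 0$, forcing $\mathbb{E}[J_s^p]=\infty$; in the boundary case $s=s(p)$ the identity reads $\mathbb{E}[(J_s^{(r)})^p]=\mathbb{E}[J_s^p]$ for every $r$, but $J_s^{(r)}<J_s$ strictly a.s.\ on $\{J_s<\infty\}$, so the equality of $p$-th moments combined with positivity of $\mu_Z$ forces $\mathbb{E}[J_s^p]\in\{0,\infty\}$, hence $=\infty$. One final invocation of Kahane transfers this divergence back to a general log-correlated $Y$.

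The main obstacle is precisely the boundary case $s=s(p)$: the scaling exponent $s(p)-s$ vanishes there, so the pure power-growth argument must be replaced by the rigidity argument just sketched. Everything else is a standard multifractal computation for GMC, and the critical case $\gamma=\sqrt 2$ goes through identically once one uses the corresponding construction and moment bounds for the derivative/Seneta--Heyde chaos in place of the subcritical ones.
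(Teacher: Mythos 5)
Your proof is correct and follows essentially the same route as the paper's Appendix: reduce via Kahane to an exact-scaling field on a small interval, use a dyadic decomposition and the scaling moment estimate $\mathbb{E}[\mu_Z(A_k)^p]\asymp 2^{-k\xi(p)}$ for sufficiency, and use exact scaling together with strict set inclusion for necessity. The only cosmetic difference is in the necessity direction: the paper applies a single dyadic step $r=1/2$ and exploits the strict inequality $\mathbb{E}\left[\left(\int_{Q_0}\right)^p\right]>\mathbb{E}\left[\left(\int_{Q_1}\right)^p\right]$, which handles the boundary case $s=s(p)$ automatically, whereas you let $r\downarrow 0$ continuously and then treat $s=s(p)$ by a separate (correct, if slightly awkwardly phrased) rigidity argument.
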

This result is known in the physics literature on $2d$-Conformal Field Theory as the extended Seiberg bound, see \cite[Lemma~3.10]{david2016liouville} for the case on the Riemann sphere. For the reader's convenience we include below in Appendix~\ref{sec:Appendix} a self-contained proof of this fact.

\begin{proof} [Second proof of Corollary \ref{coro:FinalBound}]
The argument below does not depend on the log-correlated Gaussian field so let $\mu$ be a general Gaussian multiplicative chaos measure on $\T$. Since $|z-e^{i\theta}|^2\geq (1-|z|)^{2-s}|z-e^{i\theta}|^s$ we obtain 
\begin{equation*}
\begin{split}
	\mathbb{E}[x(z)^{p}]=\mathbb{E}\left[\left(\int_{\T}\frac{1-|z|^2}{|z-e^{i\theta}|^2}d\mu(\theta)\right)^{p}\right]\lesssim (1-|z|^2)^{(s-1)p}\mathbb{E}\left[\left(\int_{\T}\frac{1}{|z-e^{i\theta}|^{s}}d\mu(\theta)\right)^{p}\right].
\end{split}
\end{equation*}
By Lemma~\ref{lem:MomentBound}, the last expectation has a bound independent of $z$ if  $s<1+\frac{\gamma^2}{2}(1-p)$. By  choosing $p=\frac{1}{2}$ and letting $s$ approach $1+\frac{\gamma^2}{2}(1-p)$ from below we obtain, for all $\varepsilon>0$,
\begin{equation*}
	(1-|z|^2)^{(s-1)p}\leq (1-|z|^2)^{\frac{\gamma^2}{8}-\varepsilon}.
\end{equation*}
\end{proof}

\section{Proof of the lower bound on the density of zeroes}

We now prove the second part of Theorem~\ref{thm2}. Recall that  $A_{\mu,\varepsilon}(\theta)$ stands for the average of the Gaussian multiplicative chaos measure $d\mu$ over the interval $[\theta-\varepsilon,\theta+\varepsilon]$. 
We start by recalling a useful result on the multifractal spectrum of chaos measure estimating the size of the set of points $e^{i\theta}\in\T$ such that $ A_{\mu,1-r}(\theta)$ is (roughly speaking) of order $(1-r)^{o(1)}$.  According to~\cite[Theorem~3.1]{Bertacco_2023} (see also an earlier discussion of multifractal spectra in~\cite[Section 4, especially equation~(4.6)]{Rhodes_2014}) with $d=\alpha=1$  it holds that
\begin{equation*}\label{eq:multi}
{\rm dim}_{\mathcal{H}}(G_0)=1-\frac{\gamma^2}{8},\quad\text{where}\quad G_0\coloneqq \left\{ \theta\in [0,2\pi)~\big|~\lim_{r \to 1^-}\frac{\log A_{\mu, 1-r}(\theta)}{\log (1-r)}=0\right\}.
\end{equation*}
Fix $\delta>0$. Since $G_0\subset\bigcup_{k\geq 1}G_k$ with
\begin{equation*}
     G_k=\left\{ \theta\in [0,2\pi)~\big|~\frac{\log A_{\mu, 1-r}(\theta)}{\log (1-r)}\in (-\delta, \delta)\quad\text{for all}\quad r\in(1-1/k,1)\right\},
\end{equation*}
we may pick $r_0\in(1/2,1)$ so that  
\begin{equation*}\label{eq:multi2}
\text{dim}_{\mathcal{H}}(G)\geq 1-\delta/2-\frac{\gamma^2}{8}
\end{equation*}
with
\begin{equation*}\label{eq:multi3}
\quad G\coloneqq\left\{\theta\in [0,2\pi)~\big|~\frac{\log A_{\mu, 1-r}(\theta)}{\log (1-r)} \in (-\delta, \delta)\quad\text{for all}\quad r\in (r_0,1)\right\}.
\end{equation*}

We denote for given $\varepsilon>0$ the $\varepsilon$-thickening of the set $G$ by $G_\varepsilon$, i.e. $G_\varepsilon\coloneqq\{\theta~|~{\rm d\,}(\theta, G)\leq \varepsilon\}$. By standard estimates for the Hausdorff content (see~\cite[Chapter~5.5]{mattila1995}), we then have the following lower bound for the Lebesgue measure of the $\varepsilon$-thickening $G_\varepsilon$, that
\begin{equation}\label{eq:multi14}
|G_\varepsilon|\gtrsim  \varepsilon^{\gamma^2/8+\delta}.
\end{equation}
We then define the random subset $H\subset \D$ by setting
\begin{equation*}
    H\coloneqq\{re^{i\theta}~|~r\in (r_0,1),\quad \theta\in G_{1-r}\}
\end{equation*}
and claim that
\begin{equation}\label{eq:multi5}
c_0(1-|z|)^\delta \leq x(z) \leq c_1(1-|z|)^{-\delta} \quad \text{for}\quad z\in H,
\end{equation}
where the constants $c_0,c_1$ are independent of $z$ but possibly random. The lower bound is immediate, since if $z=re^{i\theta}\in H$, then there is $\theta'\in G$ with $|\theta'-\theta|\leq 1-r$ and hence 
\begin{equation*}
    x(z)\geq cA_{\mu, 2(1-r)}(\theta)\geq (c/2)A_{\mu, 1-r}(\theta')\geq \frac{c}{2}(1-r)^\delta = \frac{c}{2}(1-|z|)^\delta.
\end{equation*}
For the upper bound we proceed  as in \eqref{eq:convex} and  estimate the action of Poisson kernel at $z'\coloneqq re^{i\theta'}$ by convex combinations of the averages $A_{\mu, 2^{k}(1-r)}(\theta')$, where $k=0,1,\dots$ satisfies $2^{k}(1-r)\leq 1$. Using additonally  the fact that $x$ is positive harmonic function, Harnack's inequality allows us to estimate
\begin{equation*}
\begin{split}
x(z)&\;\; \leq \;\;  c'x(z')\;\; \lesssim \;\; c'\sum_{2^{k}(1-r)\;\leq 1}2^{-2k}A_{\mu, 2^{k}(1-r)}(\theta')\\ 
&\;\; \leq \;\;  c'\sum_{2^{k}(1-r)\leq 1-r_0}2^{-2k}A_{\mu, 2^{k}(1-r)}(\theta')+c'(1-r_0)^{-1}\mu(\T)\\&\;\;\lesssim \;\; c'(1-|z|)^{-\delta} +c'',
\end{split}
\end{equation*}
where $c''$ is a random constant. This yields \eqref{eq:multi5}.

In order to control the contribution of the imaginary part $y$ we will use a simple deterministic argument that relies on the fact that when the averages of the measure are in control, then the imaginary part cannot be too large either. Assume that $z=re^{i\theta}\in H$. Since the conjugate kernel has the upper bound $\lesssim \max (|e^{i\theta}-z|^{-1},(1-r)^{-1})$ and we have
\begin{equation*}
    \mu(\{ \theta'\,\big|\, |\theta'-\theta|\leq (1-r)2^k\}) = (1-r)2^kA_{\mu, 2^{k}(1-r)}(\theta),
\end{equation*}
it follows by considering dyadic annuli around $\theta$ such that the  inner radii are $(1-r)2^k$, with $k=0\ldots$, and the interval $(\theta-(1-r),\theta+(1-r))$ separately, that
\begin{eqnarray*}\label{eq:multi10}
|y(z)|&\lesssim& c_{r_0}\mu(\T)+ \sum_{2^k(1-r)\leq 1-r_0}A_{\mu, 2^{k}(1-r)}(\theta)\;\leq \; c+c'\log\left(\frac{1-r_0}{1-r}\right)(1-|z|)^{-\delta}\\
&\lesssim& c_2(1-|z|)^{-2\delta},
\end{eqnarray*}
where $c_2$ is a random constant.

We are now ready  to conclude the desired lower bound.
For $z\in H$ it holds that $\log|\varphi(z)|\gtrsim (1-|z|)^{-(\delta+4\delta)}$. By \eqref{eq:multi14} we obtain that
\begin{equation*}
    \int_{|z|=r}-\log|\varphi(z)|d|z|\;\gtrsim\; (1-r)^{(\frac{\gamma^2}{8}+\delta)}(1-r)^{(5\delta)}.
\end{equation*}
Since $6\delta$ can be chosen arbitrary small this implies the claim by Lemma \ref{lemm:densityupperbound}.

\appendix
\section{Proof of Lemma \ref{lem:MomentBound}}\label{sec:Appendix}

It is clearly enough to estimate the moment  for $z=1$. 
Consider the arc $I=[e^{-i\pi/8},e^{i\pi/8}]\subset\T$ and $0<p<1$. By subadditivity,
\begin{eqnarray*}
	\mathbb{E}\left(\int_{\T}\frac{1}{|1-e^{i\theta}|^{s}}d\mu_Y(\theta)\right)^{p}&\leq& \mathbb{E}\left(\int_{I}\frac{1}{|1-e^{i\theta}|^{s}}d\mu_Y(\theta)\right)^{p}+\mathbb{E}\left(\int_{\T\setminus I}\frac{1}{|1-e^{i\theta}|^{s}}d\mu_Y(\theta)\right)^{p}\\
	&\lesssim& \mathbb{E}\left(\int_{I}\frac{1}{|\theta|^{s}}d\mu_Y(\theta)\right)^{p} \; + \; C(p,\gamma),
\end{eqnarray*}
where $C(p,\gamma)<\infty$ by Fact \ref{fact:Positivemoment}.

By Kahane's convexity inequality (see Fact~\ref{fact:ScalingKahane} and the discussions thereafter), we can trade $Y$, restricted to $I$, for the exact scaling log-correlated field $\tilde{Y}$ with covariance kernel
$
 	K(\theta,\theta')=\log\frac{1}{|\theta-\theta'|}$, $\theta,\theta'\in[-\frac{\pi}{8},\frac{\pi}{8}].
$
In order to use the good scaling properties of the field $\tilde Y$, 
consider a dyadic tiling of the interval $[-\pi/8,\pi/8]$ by denoting $Q_k\coloneqq [-2^{-k}\pi/8,2^{-k}\pi/8]$, and $R_{k}\coloneqq Q_{k}\setminus Q_{k+1}$ for $k\geq 0$. We recall that the exact scaling property states  that for all $0<r<1$, in law,
\begin{equation*}
	(\tilde{Y}(x+ry))_{y\in J}=(\tilde{Y}(x+y))_{y\in J}+\sqrt{-\log r}N
\end{equation*}
for any interval $J\subset\R$ with $|J|<1$, where $N$ is a standard Gaussian independent of everything else, see Fact~\ref{fact:ScalingPurelog}. Using the relation above with $x=0$, $J=Q_{k}$ and $r=\frac{1}{2}$, with a $\varepsilon$-regularization by any scale-invariant mollifier, we get that for all $k\geq 0$,
\begin{equation}\label{eq:computation}
\begin{split}
	&\mathbb{E}\left[\left(\int_{R_{k+1}}\frac{1}{|\theta|^{s}}d\mu_{\tilde{Y}}(\theta)\right)^{p}\right]\\
	={}&\lim\limits_{\varepsilon\to 0}\mathbb{E}\left[\left(\int_{R_{k+1}}\frac{1}{|\theta|^{s}}e^{\gamma \tilde{Y}_{\varepsilon}(\theta)-\frac{\gamma^2}{2}\mathbb{E}\left[\tilde{Y}_{\varepsilon}(\theta)^2\right]}d\theta\right)^{p}\right]\\
	={}&\lim\limits_{\varepsilon\to 0}\mathbb{E}\left[\left(\int_{R_{k}}\frac{1}{|\theta/2|^{s}}e^{\gamma \tilde{Y}_{\varepsilon/2}(\theta/2)-\frac{\gamma^2}{2}\mathbb{E}\left[\tilde{Y}_{\varepsilon/2}(\theta/2)^2\right]}d\left(\frac{\theta}{2}\right)\right)^{p}\right]\\
	\stackrel{\ast}{=}{}&2^{s p-p}\lim\limits_{\varepsilon\to 0}\mathbb{E}\left[\left(\int_{R_{k}}\frac{1}{|\theta|^{s}}e^{\gamma \tilde{Y}_{\varepsilon}(\theta)+\gamma\sqrt{\log2}N-\frac{\gamma^2}{2}\mathbb{E}\left[\tilde{Y}_{\varepsilon}(\theta)^2\right]-\frac{\gamma^2}{2}\log2}d\theta\right)^{p}\right]\\
	={}&2^{s p-p(1+\gamma^2/2)+p^2\gamma^2/2}\lim\limits_{\varepsilon\to 0}\mathbb{E}\left[\left(\int_{R_{k}}\frac{1}{|\theta|^{s}}e^{\gamma \tilde{Y}_{\varepsilon}(\theta)-\frac{\gamma^2}{2}\mathbb{E}\left[\tilde{Y}_{\varepsilon}(\theta)^2\right]}d\theta\right)^{p}\right]\\
	={}&2^{s p-p(1+\gamma^2/2)+p^2\gamma^2/2}\mathbb{E}\left[\left(\int_{R_k}\frac{1}{|\theta|^{s}}d\mu_{\tilde{Y}}(\theta)\right)^{p}\right],
\end{split}
\end{equation}
where in step $(\ast)$ we used the scaling relation. 

The careful reader might has perhaps observed, that we have above actually treated only the subcritical chaos, i.e. $\gamma <\sqrt{2}$. In order to extend to the case of critical chaos, we simply use the definition via the so-called Seneta-Heyde normalization, where one defines the critical chaos  $\mu^{\gamma_c}_Y$ as the weak-$\star$ limit in probability
\begin{equation*}
    \mu^{\gamma_c}_Y=\lim\limits_{\varepsilon\to 0}\sqrt{-\log\varepsilon}\mu_{\varepsilon}^{\gamma_c},
\end{equation*}
where $\mu_{\varepsilon}^{\gamma_c}$ is the $\varepsilon$-regularized measure with parameter $\gamma_c=\sqrt{2}$,
\begin{equation*}
    d\mu^{\gamma_c}_{Y}(\theta)=\lim\limits_{\varepsilon\to 0}e^{\gamma_c Y_\varepsilon(\theta)-\frac{\gamma_c^2}{2}\mathbb{E}\left[Y_{\varepsilon}(\theta)^2\right]}d\theta.
\end{equation*}
Via this definition one immediately checks that \eqref{eq:computation} remains valid also for $\gamma=\sqrt{2}$. For other equivalent definitions and a review on the critical Gaussian chaos measures, see~\cite{powell2020critical}.

Notice that the exponent $s p-p(1+\gamma^2/2)+p^2\gamma^2/2$ is negative if and only if $s<s(p)=1+\frac{\gamma^2}{2}(1-p)$. Similar scaling relation holds between $Q_{k+1}$ and $Q_k$. When
\begin{equation*}
 	\mathbb{E}\left[\left(\int_{[-\pi/8,\pi/8]}\frac{1}{|\theta|^{s}}d\mu_{\tilde{Y}}(\theta)\right)^{p}\right]<\infty,
\end{equation*}
we have, since $Q_{k+1}\subset Q_k$,
\begin{equation*}
\begin{split}
	\mathbb{E}\left[\left(\int_{Q_1}\frac{1}{|\theta|^{s}}d\mu_{\tilde{Y}}(\theta)\right)^{p}\right]&=2^{s p-p(1+\gamma^2/2)+p^2\gamma^2/2}\mathbb{E}\left[\left(\int_{Q_0}\frac{1}{|\theta|^{s}}d\mu_{\tilde{Y}}(\theta)\right)^{p}\right]\\
	&>2^{s p-p(1+\gamma^2/2)+p^2\gamma^2/2}\mathbb{E}\left[\left(\int_{Q_1}\frac{1}{|\theta|^{s}}d\mu_{\tilde{Y}}(\theta)\right)^{p}\right],
\end{split}
\end{equation*}
in which case it is necessary to have $s<s(p)=1+\frac{\gamma^2}{2}(1-p)$. On the other hand, if this is satisfied, then  subadditivity yields for any $0<p<1$,
\begin{equation*}
	\mathbb{E}\left[\left(\int_{[-\pi/8,\pi/8]}\frac{1}{|\theta|^{s}}d\mu_{\tilde{Y}}(\theta)\right)^{p}\right]\leq\sum\limits_{k=0}^{\infty}\mathbb{E}\left[\left(\int_{R_k}\frac{1}{|\theta|^{s}}d\mu_{\tilde{Y}}(\theta)\right)^{p}\right]=C_0\sum\limits_{k=0}^{\infty}2^{(s p-p(1+\gamma^2/2)+p^2\gamma^2/2)k}
\end{equation*}
where $C_0=\mathbb{E}\left[\left(\int_{R_0}\frac{1}{|\theta|^{s}}d\mu_{\tilde{Y}}(\theta)\right)^{p}\right]<\infty$ since the Gaussian chaos measures in $1d$ (even in the critical case) have finite $p$-moments for $p<1$. \qed

\bibliographystyle{alpha}

\end{document}